\documentclass[10pt]{article}
\pdfoutput=1
\usepackage{setspace}
\usepackage{float}
\usepackage[english]{isodate}

\setlength{\textwidth}{6.3in}
\setlength{\textheight}{8.7in}
\setlength{\topmargin}{0in}
\setlength{\headsep}{0pt}
\setlength{\headheight}{0pt}
\setlength{\oddsidemargin}{0pt}
\setlength{\evensidemargin}{0pt}
\parskip 1ex

\newcommand\blfootnote[1]{%
	\begingroup
	\renewcommand\thefootnote{}\footnote{#1}%
	\addtocounter{footnote}{-1}%
	\endgroup
}

\usepackage{amsthm}
\usepackage{enumerate}
\theoremstyle{plain}

\usepackage[hyperfootnotes=false]{hyperref}
\usepackage{amsmath}
\usepackage{latexsym,color}
\usepackage[dvipsnames]{xcolor}
\usepackage{amsmath}
\usepackage{amssymb}
\usepackage{amsthm}
\usepackage{amscd}
\usepackage{cite}
\usepackage{graphicx}

\usepackage{amstext}
\usepackage{array}
\usepackage{ytableau}

\usepackage{tikz}
\usepackage{comment}
\usepackage[english]{isodate}

\newcommand{\ncom}{\newcommand }
\ncom{\lmp}{\lambda^{-+}}
\ncom{\lp}{\lambda^{+}}
\ncom{\lpn}{\lambda^{+,n}}
\ncom{\lm}{\lambda^{-}}
\ncom{\lmeq}{\lambda^{-,=}}

\DeclareMathOperator{\End}{End}

\DeclareMathOperator{\Ind}{Ind}
\DeclareMathOperator{\Res}{Res}
\DeclareMathOperator{\Hom}{Hom}
\newcommand{\CC}{\mathbb{C}}
\newcommand{\ZZ}{\mathbb{Z}}
\newcommand{\ct}{{\rm{ct}}}
\newtheorem{theorem}{Theorem}[section]
\newtheorem{proposition}[theorem]{Proposition}
\newtheorem{corollary}[theorem]{Corollary}
\newtheorem{lemma}[theorem]{Lemma}

\theoremstyle{definition}
\newtheorem{definition}[theorem]{Definition}
\newtheorem{example}[theorem]{Example}
\newtheorem{remark}[theorem]{Remark}
\newtheorem{convention}[theorem]{Convention}

\hypersetup{
	colorlinks=true,
	linkcolor=blue,
	filecolor=magenta,      
	urlcolor=blue,
	citecolor=BrickRed,
	linktoc=page
}

\usepackage[titles]{tocloft}

\title{Jucys--Murphy elements of partition algebras for the rook monoid}
\author{Ashish Mishra and Shraddha Srivastava}

\newcommand{\Addresses}{{
  \bigskip
  \footnotesize

  \textsc{Ashish Mishra, Faculdade de Matem\'{a}tica, Universidade Federal do Par\'{a}, Bel\'{e}m, Brazil}\par\nopagebreak
  \textit{E-mail address}: \texttt{ashishmsr84@gmail.com}

  \medskip

  \textsc{Shraddha Srivastava, Tata Institute of Fundamental Research, Mumbai, India}\par\nopagebreak
  \textit{E-mail address}: \texttt{maths.shraddha@gmail.com}

}}

\begin{document}
\date{}
\maketitle
\begin{abstract}
Kudryavtseva and Mazorchuk exhibited Schur--Weyl duality between the rook monoid algebra $\CC R_n$ and the subalgebra $\CC I_k$ of the partition algebra $\CC A_k(n)$  acting on $(\CC^n)^{\otimes k}$. In this paper, we consider a subalgebra $\CC I_{k+\frac{1}{2}}$ of $\CC I_{k+1}$ such that there is Schur--Weyl duality between the actions of $\CC R_{n-1}$ and $\CC I_{k+\frac{1}{2}}$ on $(\CC^n)^{\otimes k}$.  This paper studies the representation theory of partition algebras $\CC I_k$ and $\CC I_{k+\frac{1}{2}}$ for rook monoids inductively by considering the multiplicity free tower $$\CC I_1\subset \CC I_{\frac{3}{2}}\subset \CC I_{2}\subset \cdots\subset \CC I_{k}\subset \CC I_{k+\frac{1}{2}}\subset\cdots.$$ Furthermore, this inductive approach is established as a spectral approach by describing the Jucys--Murphy elements and their actions on the canonical Gelfand--Tsetlin bases, determined by the aforementioned multiplicity free tower, of irreducible representations of $\CC I_k$ and $\CC I_{k+\frac{1}{2}}$. Also, we describe the Jucys--Murphy elements of $\CC R_n$ which play a central role in the demonstration of the actions of Jucys--Murphy elements of $\CC I_k$ and $\CC I_{k+\frac{1}{2}}$. 
\end{abstract}

\blfootnote{{\bf 2010 MSC}: 05E10, 20M30, 20C15.
	
	{\bf Keywords}: Rook monoid, totally propagating partition algebras, Jucys--Murphy elements, Gelfand--Tsetlin basis, RSK.}


 \section{Introduction}\label{sec:intro}
  
For $k\in \ZZ_{\geq 0}$, the set of nonnegative integers, and $\xi\in \CC$, the field of complex numbers, the partition algebra $\CC A_k(\xi)$, defined independently by Jones~\cite{Jones} and Martin~\cite{M94}, has a basis $A_k$ consisting of partition diagrams corresponding to the set partitions of $\{1,2,\ldots,k,1',2',\ldots,k'\}$. Martin and Rollet~\cite{MR98} introduced a subalgebra, denoted by $\CC A_{k+\frac{1}{2}}(\xi)$, of $\CC A_{k+1}(\xi)$. For $n\in \ZZ_{> 0}$, the set of positive integers, the partition algebras $\CC A_k(n)$ and $\CC A_{k+\frac{1}{2}}(n)$ are in Schur--Weyl duality with symmetric groups $S_n$ and $S_{n-1}$, respectively, while acting on $(\CC^n)^{\otimes k}$ \cite{Jones, MR98}. Moreover, for $l\in\frac{1}{2}\ZZ_{> 0}$, the algebra $\CC A_l(\xi)$ is semisimple unless $\xi$ is a nonnegative integer less than $2l-1$.
  Whenever partition algebras are semisimple, the branching rule for the inclusion $ \CC A_{l-\frac{1}{2}}(\xi) \subset \CC A_{l}(\xi)$  was determined in~\cite[Proposition 1]{MR98} and the Bratteli diagram for the tower of algebras
 \begin{equation}\label{eq:tower1}
\CC A_{0}(\xi) \subseteq\CC A_{\frac{1}{2}}(\xi)\subset \CC A_{1}(\xi)\subset \CC A_{\frac{3}{2}}(\xi)\subset \CC A_{2}(\xi)\subset\cdots
 \end{equation}
  was constructed. The Bratteli diagram for the tower~\eqref{eq:tower1} is a simple graph, i.e., the restriction of an irreducible representation of $\CC A_{l}(\xi)$ to $\CC A_{l-\frac{1}{2}}(\xi)$ is multiplicity free.
  
Given a tower of finite-dimensional semisimple associative algebras 
\begin{align}\label{al:tower}
\CC \cong \mathbb{A}_0 \subset \mathbb{A}_1 \subset \cdots \subset \mathbb{A}_h \subset \cdots
\end{align}
 such that the corresponding Bratteli diagram is a simple graph, there exists a canonical basis, called Gelfand--Tsetlin basis, of an irreducible representation of $\mathbb{A}_h$ (for details, see \cite[p. 585]{vo1}). A Gelfand--Tsetlin vector of $\mathbb{A}_h$ is an element of the Gelfand--Tsetlin basis of some irreducible representation of $\mathbb{A}_h$. Along with Gelfand--Tsetlin basis, the Jucys--Murphy elements play an important role in the spectral approach to the representation theory when the Bratteli diagram for the tower of groups or algebras is a simple graph; to mention some specific instances:
 for symmetric groups by Okounkov and Vershik~\cite{vo1,vo}, for generalized symmetric groups by  Mishra and Srinivasan~\cite[Section 4]{MS16}, for partition algebras by Halverson and Ram~\cite[p. 898]{HR05},  for $q$-rook monoid algebras (when $q\neq 1$) by Halverson~\cite[Section 3.1]{Hal04}, and for partition algebras for complex reflection groups by Mishra and Srivastava~\cite[Section 7]{MS}. Corresponding to the tower of algebras~\eqref{al:tower}, the Jucys--Murphy elements of $\mathbb{A}_h$ have the following fundamental properties: (i) these elements commute with each other, (ii) the Gelfand--Tsetlin basis of an irreducible representation of $\mathbb{A}_h$ is uniquely determined by the eigenvalues of the actions of these elements, and these eigenvalues distinguish the nonisomorphic irreducible representations of $\mathbb{A}_h$, and (iii) the sum of all the Jucys--Murphy elements of $\mathbb{A}_h$ is a central element of $\mathbb{A}_h$.

 Let $R_n$ be the set consisting of $n\times n$ matrices whose entries are either $0$ or $1$ such that each row and each column has at most one nonzero entry. The set $R_n$ is a monoid with respect to matrix multiplication, and Solomon~\cite{Solomon} called $R_n$ a rook monoid. Munn~\cite{Munn} characterized the irreducible representations of the rook monoid algebra $\CC R_n$ in terms of the irreducible representations of the group algebra $\CC S_r$ of the symmetric group $S_r$ for $0\leq r\leq n$. The algebra $\CC R_n$ is semisimple over $\CC$ and its irreducible representations are indexed by the partitions of $r$, where $0\leq r\leq n$. Grood~\cite{Grood} constructed an analog of the Specht modules for $\CC R_n$. 
 
 Solomon~\cite{Solomon1} defined the $q$-rook monoid algebra $\mathcal{I}_n(q)$ over $\CC(q)$, and when $q= 1$, it is the rook monoid algebra $\CC R_n$. Halverson~\cite{Hal04} constructed the irreducible representations and defined Jucys--Murphy elements of $\mathcal{I}_n(q)$. However, in~\cite[Section 3.1]{Hal04} the actions of the Jucys--Murphy elements on Gelfand--Tsetlin vectors  are not sufficient to distinguish the nonisomorphic irreducible representations of $\mathcal{I}_n(1)$. In this paper, the first important result is to close this gap for $q=1$, i.e., we give the Jucys--Murphy elements of $\CC R_n$.

  Kudryavtseva and Mazorchuk~\cite{KM08} gave the Schur--Weyl duality between the actions of $\CC R_n$ and a subalgebra of $\CC A_k(n)$ on $(\CC^n)^{\otimes k}$. This subalgebra, denoted by $\CC I_k$, is the monoid algebra of the submonoid $I_k$ of $A_k$, where $I_k$ consists of those partition diagrams each of whose block is propagating, i.e., a block which intersects nontrivially with both $\{1,2,\ldots,k\}$ and 
 $\{1',2',\ldots,k'\}$. The monoid $I_k$ appeared first time in a work of Fitzgerald and Leech~\cite{FL} as a dual symmetric inverse monoid and as a categorical dual of $R_n$. Maltcev~\cite{Mal07} described a generating set of 
$I_k$ and also determined the automorphisms of $I_k$. Easdown, East, and Fitzgerald~\cite{EEF08} gave a monoid presentation of $I_{k}$.

In this paper, we study the representation theory of monoid algebras $\CC I_k$ and $\CC I_{k+\frac{1}{2}}:=\CC I_{k+1}\cap \CC A_{k+\frac{1}{2}}(\xi)$. These algebras are independent of the parameter $\xi$ and are always semisimple over $\CC$.  We call $\CC I_{k}$ and $\CC I_{k+\frac{1}{2}}$  totally propagating partition algebras. In the light of Schur--Weyl dualities of $\CC I_k$ and $\CC I_{k+\frac{1}{2}}$ with rook monoid algebras $\CC R_n$ and $\CC R_{n-1}$, respectively, we also call $\CC I_k$ and $\CC I_{k+\frac{1}{2}}$ partition algebras of rook monoids (see Section~\ref{sec:SWD}).

 The main results in this paper are as follows.
 \begin{enumerate}[(a)]
 	\item For the rook monoid algebra:
 	\begin{enumerate}[(i)]
 		\item We give  Jucys--Murphy elements~\eqref{eq:New_JM} of $\CC R_n$ and  describe their actions on the Gelfand--Tsetlin bases of the irreducible 
 		representations of $\CC R_n$ in Theorem~\ref{thm:action}.
 		
		\item In Theorem~\ref{prop:charc}, we compute the Kronecker product of $\CC^n$ with any irreducible representation of $\CC R_n$. 

 		 	 \item  As an application of Theorem~\ref{prop:charc} and Robinson--Schensted--Knuth row-insertion algorithm, we give a bijective proof of \cite[Example 3.18]{Solomon} in Theorem~\ref{coro:bij}
 		  which gives the multiplicity of an irreducible representation of $\CC R_n$ in $(\CC^n)^{\otimes k}$ involving a Stirling number of the second kind and the number of standard Young tableaux. 
 	  \end{enumerate}
 
 	\item For totally propagating partition algebras:
 
 	\begin{enumerate}[(i)]
 		\item We construct a tower~\eqref{eq:tower} of totally propagating partition algebras using the embedding \eqref{eq:embedding} $\CC I_k\subset\CC I_{k+\frac{1}{2}}$. Theorem~\ref{thm:item2} proves that the Bratteli diagram for the tower~\eqref{eq:tower} is a simple graph.
 		
 		\item  We give Jucys--Murphy elements~\eqref{eq:JM_tppa} of $\CC I_k$ and $\CC I_{k+\frac{1}{2}}$, and describe their actions on the Gelfand--Tsetlin bases of irreducible representations of $\CC I_k$ and $\CC I_{k+\frac{1}{2}}$ in Theorem~\ref{thm:JM}. Moreover, Corollary \ref{cor:jmtppa} observes how the Jucys--Murphy elements give a spectral approach to the representation theory of totally propagating partition algebras. 
 	\end{enumerate}
 \end{enumerate}
 
  The outline of this paper is as follows. We start Section~\ref{sec:rook} with a brief overview of the irreducible representations of $\CC R_n$. 
  In the rest of this section we give new results about the representation theory of $\CC R_n$, in particular, Jucys--Murphy elements of $\CC R_n$ (Section~\ref{sec:JM_rook}), and the Kronecker product of $\CC^n$ with an irreducible representation of $\CC R_n$ (Section~\ref{sec:Kro}). Section~\ref{sec:ind_res} contains a proof of Frobenius reciprocity between modified induction and restriction rules for the inclusion $\CC R_{n-1}\subset\CC R_n$.
  
 Sections \ref{sec:partition} and \ref{sec:SWD} include the preliminaries on partition algebras and Schur--Weyl dualities, respectively. We define totally propagating partition algebras in Definition~\ref{def:total_par}; Section \ref{sec:irrep_tppa} contains a parametrization of their irreducible representations, a construction of a tower of totally propagating partition algebras and the Bratteli diagram for this tower.   
 Section~\ref{sec:JM} details about Jucys--Murphy elements of totally propagating partition algebras.
  
\section{The rook monoid algebra}\label{sec:rook}
Let $R_n$ be the set of all $n\times n$ matrices whose entries are either $0$ or $1$ such that there is at most one nonzero entry in each row and each column. Under the matrix multiplication $R_n$ is a monoid, called the rook monoid. The algebra $\CC R_n$ is called the rook monoid algebra. Section~\ref{sec:irrep_rook} contains a preliminary on representation theory of $\CC R_n$.

\subsection{The irreducible representations of \texorpdfstring{$\CC R_n$}{}}\label{sec:irrep_rook}
 In this section,  we give a well-known set of generators and relations of $\CC R_n$ as well as a construction of the irreducible representations of $\CC R_n$.

{\bf Generators and relations}.  For a transposition $(l_1,l_2)\in S_n$, its corresponding permutation matrix in $R_n$ is also denoted by $(l_1,l_2)$.  For $1\leq i\leq n-1$, let $s_i$ denote $(i,i+1)$. Let $id$ denote the $n\times n$ identity matrix. For $1\leq j\leq n$, let $P_j\in R_n$ be the diagonal matrix whose first $j$ diagonal entries are $0$ and the remaining diagonal entries are $1$.  From~\cite[Section 2]{Hal04}, 
 the set $\{s_i,P_j\mid 1\leq i\leq n-1 \text{ and } 1\leq j\leq n\}$ generates $\CC R_n$.
 Recursively using the relation 
 \begin{equation}\label{eq:P_j}
 P_j=P_{j-1}s_{j-1} P_{j-1} \text{ for } 2\leq j\leq n
 \end{equation}
 from~\cite[Lemma 1.4]{Hal04}, we see that the set
 \begin{equation}\label{eq:rgen_set}
 \{s_i,P_1\mid 1\leq i\leq n-1\}
 \end{equation}
 is also a generating set of $\CC R_n$. These generators satisfy the following relations:
 
 \noindent$(a) \, s_i^2=id$, for $1\leq i \leq n-1$, \quad\,\,\,\,\,\,$(b)\, s_is_{i+1}s_{i}=s_{i+1}s_{i}s_{i+1}$, for $1\leq i\leq n-1$, \\$(c)\, s_is_j=s_{j}s_{i}$, when $|i-j|\geq 2$, \,\,\, $(d)\, s_iP_1=P_1s_{i}$, for $2\leq i\leq n-1$, and \quad $(e)\, P_1^{2}=P_1$.

{\bf  Irreducible representations}.
Let $\Lambda_{\leq n}$ denote the set of all partitions of $r$, where $0\leq r \leq n$. Since a partition can be written equivalently as a Young diagram, therefore we use the same notation $\Lambda_{\leq n}$ to denote the set of all 
Young diagrams with total number of boxes being $r$, where  $0\leq r \leq n$. It will be clear from the context whether we are considering a partition or its Young diagram. The Young diagram $\emptyset$ containing zero boxes corresponds to the unique partition of zero with zero parts. We draw Young diagrams by following the convention of writing matrices with $X$-axis running downwards and $Y$-axis running to the right.
\begin{definition}
For $\lambda\in\Lambda_{\leq n}$, an $n$-standard tableau of shape $\lambda$ is a filling of the Young diagram of shape $\lambda$ with entries 
from $\{1,\ldots,n\}$ such that entries strictly increase along each row from left to right and along each column 
 from top to bottom. 
 \end{definition}
  Let $\tau^{\lambda}_{n}$ be the set of all $n$-standard tableaux of shape $\lambda$. Fix $L\in \tau_{n}^{\lambda}$. Let $v_{L}$ denote the symbol indexed by $L$.
Define 
\begin{displaymath}
V^{\lambda}_n:=\CC\text{-span} \{v_{L}\mid L\in \tau^{\lambda}_{n}\}. 
\end{displaymath}
 If $i$ is an entry of a box in $L$, then we write $i\in L$. Define $s_iL$ as follows: if $i\in L$, then replace $i$ by $i+1$, and if $i+1\in L$, then replace $i+1$ by $i$, and the remaining entries in $L$ are fixed. Note that $s_iL\notin\tau^{\lambda}_n$ if and only if $i$ and $i+1$ appear consecutively in the same row or same column of $L$.

The content of a box $b$ with coordinates $(x,y)$ in a Young diagram is $\ct(b):=y-x$. For $\alpha\in L$, let $L(\alpha)$ denote the box in $L$ containing $\alpha$. If $\alpha, \alpha+1\in L$, then by the definition of a $n$-standard tableau, we have $\text{ct}(L(\alpha+1))\neq \text{\ct}(L(\alpha))$, and define
\begin{equation}\label{eq:scalar}
a_{L(\alpha)}=\frac{1}{\text{ct}(L(\alpha+1))-\text{ct}(L(\alpha))}.
\end{equation}
For $1\leq i\leq n-1 $, define an action of $s_i$ 
on $V^{\lambda}_n$ as follows:
\begin{align}\label{eq:al_si} 
s_{i}v_{L}=
\begin{cases}
 v_{s_{i}L} & \text{if }i\in L, i+1\notin L,\\
 v_{s_{i}L} & \text{if }i\notin L, i+1\in L,\\
 v_{L}      & \text{if }i\notin L, i+1\notin L,\\
 a_{L(i)}v_{L}+(1+a_{L(i)})v_{L'} & \text{if } i\in L, i+1\in L,
\end{cases}
\end{align}
where
\begin{align*}
v_{L'}=\begin{cases}
v_{s_{i}L} & \text{if } s_{i}L\in\tau_{n}^{\lambda},\\
0 & \text{otherwise}.
\end{cases}
\end{align*}

Define an action of $P_1$ on $V^{\lambda}_{n}$ as follows:
\begin{align}\label{eq:P_1}
 P_{1}v_{L}= \begin{cases}
                v_{L} & \text{if }1\notin L,\\
                0 & \text{otherwise}.
               \end{cases}
\end{align}
For $2 \leq j\leq n$, using~\eqref{eq:P_j}, \eqref{eq:al_si} and \eqref{eq:P_1}, we get the action $P_j$ on $V^{\lambda}_n$ as
\begin{align}\label{eq:ac_P_j}
P_{j}v_{L}= \begin{cases}
v_{L} & \text{if }1,\ldots,j\notin L,\\
0 & \text{otherwise}.
\end{cases}
\end{align}

 \begin{example}
 For $\lambda=\emptyset$, $V^{\lambda}_n$ is isomorphic to the trivial representation $\CC$ of $\CC R_n$. For $\lambda=(1)$, $V^{\lambda}_n$ is isomorphic to the defining representation $\CC^n$ of $\CC R_n$, i.e, the elements of $R_{n}$ act on $\CC^n$ by the matrix multiplication.
\end{example}

Solomon~\cite{Solomon1} defined a quantum deformation of $\CC R_n$, known as the $q$-rook monoid algebra, over $\CC(q)$. By specializing $q=1$ in~\cite[Theorem 3.2]{Hal04}, the following theorem gives a classification of the irreducible representations of $\CC R_n$.
\begin{theorem}\label{thm:irrep}
 The set $\{V^{\lambda}_n\mid \lambda\in\Lambda_{\leq n}\}$ is a complete set of pairwise nonisomorphic irreducible representations
 of $\CC R_{n}$.
\end{theorem}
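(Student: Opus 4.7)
The plan is to verify the theorem directly by showing that the formulas of Section~2.1 define a well-defined representation on each $V^{\lambda}_n$, establishing irreducibility and non-isomorphism, and finally matching dimensions to get completeness; this is precisely the $q=1$ shadow of Halverson's construction in \cite{Hal04}.

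First I would check that the action of the generators $\{s_i, P_1\}$ satisfies the defining relations (a)--(e) above. Relations (a), (c), and (e) are essentially immediate from~\eqref{eq:al_si} and~\eqref{eq:P_1}. Relation (d), $s_i P_1 = P_1 s_i$ for $i \geq 2$, follows by a short case analysis on whether $1 \in L$, since for $i \geq 2$ the action of $s_i$ never involves the entry $1$. The main technical verification is the braid relation $s_i s_{i+1} s_i = s_{i+1} s_i s_{i+1}$, which requires a case analysis on which of $i,i+1,i+2$ lie in $L$; in the cases where all three are present, the verification reduces to the classical Young seminormal identity, using~\eqref{eq:scalar} and the fact that contents of adjacent boxes in a standard tableau are distinct.

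Next, to prove irreducibility of each $V^{\lambda}_n$, I would exploit the fact that $\langle s_1,\ldots,s_{n-1}\rangle \subset \CC R_n$ is isomorphic to $\CC S_n$ and that the subspace of $V^{\lambda}_n$ spanned by those $L\in \tau_n^{\lambda}$ whose entries are exactly $\{1,\ldots,r\}$ (with $r=|\lambda|$) is Young's seminormal module for $S_r$, extended to $S_n$ via induction from $S_r \times S_{n-r}$. The projectors $P_j$ on $V^{\lambda}_n$ in~\eqref{eq:ac_P_j} detect precisely which entries are missing from $L$, so combining the $S_n$-action with the idempotents $P_j$ shows that the cyclic module generated by any nonzero vector fills out all of $V^{\lambda}_n$.

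For non-isomorphism and completeness I would do a dimension count. The number of $n$-standard tableaux of shape $\lambda\vdash r$ is $\binom{n}{r}f^{\lambda}$, since one first chooses the $r$ entries from $\{1,\ldots,n\}$ and then fills them in standardly. Hence
\begin{align*}
\sum_{\lambda \in \Lambda_{\leq n}} (\dim V^{\lambda}_n)^2 = \sum_{r=0}^{n} \binom{n}{r}^2 \sum_{\lambda \vdash r} (f^{\lambda})^2 = \sum_{r=0}^{n} \binom{n}{r}^2 r! = |R_n|,
\end{align*}
where the last equality comes from choosing $r$ nonzero rows, $r$ nonzero columns of an element of $R_n$ and a bijection between them. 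Semisimplicity of $\CC R_n$ then forces the $V^{\lambda}_n$ to be pairwise non-isomorphic and to exhaust all irreducibles. The main obstacle I expect is the braid relation verification, which, although standard in the Young seminormal context, needs careful bookkeeping in the mixed cases where some of $i,i+1,i+2$ are present in $L$ and some are not; once that is in place, the remaining steps are mechanical.
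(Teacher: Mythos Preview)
The paper does not actually prove this theorem; it simply cites \cite[Theorem~3.2]{Hal04} and specializes to $q=1$. Your direct reconstruction is sensible and is essentially the $q=1$ content of Halverson's argument, but there is a real gap in your last step.

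The equality $\sum_{\lambda}(\dim V^\lambda_n)^2=|R_n|$, together with irreducibility and semisimplicity, does \emph{not} by itself force pairwise non-isomorphism. For instance, over $\CC S_3$ one can list three irreducibles consisting of two copies of the trivial representation and one copy of the standard; their squared dimensions sum to $1+1+4=6=\dim\CC S_3$, yet the sign representation is missed. You therefore need an independent non-isomorphism argument \emph{before} the dimension count can deliver completeness. One clean route uses the conjugates $\gamma_i=s_{i-1}\cdots s_1P_1s_1\cdots s_{i-1}$ of $P_1$, which satisfy $\gamma_iv_L=v_L$ if $i\notin L$ and $0$ otherwise: the element $\sum_i(1-\gamma_i)$ then acts on $V^\lambda_n$ by the scalar $|\lambda|$, separating different sizes; and for fixed $|\lambda|=r$ the idempotent $\prod_{i\le r}(1-\gamma_i)\prod_{i>r}\gamma_i$ projects $V^\lambda_n$ onto the seminormal Specht module $S^\lambda$ for $S_r$, so a $\CC R_n$-isomorphism $V^\lambda_n\cong V^\mu_n$ would force $S^\lambda\cong S^\mu$ and hence $\lambda=\mu$. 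The same idempotents also tighten your irreducibility sketch, which as written is too vague (the restriction of $V^\lambda_n$ to $S_n$ is generally reducible, so ``$S_n$-action plus $P_j$'' is not yet an argument): any nonzero $\CC R_n$-submodule is stable under these commuting projectors, hence meets some irreducible $S^\lambda$-copy nontrivially, contains it, and is then swept out to all of $V^\lambda_n$ by the $S_n$-action.
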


The monoid $R_{n-1}$ is isomorphic to the submonoid of $R_{n}$ consisting of elements whose $(n,n)$-th entry is $1$, so $\CC R_{n-1}$ embeds inside $\CC R_{n}$. The following proposition is~\cite[Corollary 3.3]{Hal04} which describes the branching rule for the embedding $\CC R_{n-1}\subset \CC R_n$.

For $\lambda\in\Lambda_{\leq n}$, let $\lambda^{-,=}$ denote the set consisting of $\nu\in\Lambda_{\leq (n-1)}$ such that either $\nu$ is obtained by removing a box from an inner corner of $\lambda$ or $\nu=\lambda$. Similarly, for $\mu\in\Lambda_{\leq n-1}$, let $\mu^{+,=}$ denote the set 
consisting of $\nu'\in\Lambda_{\leq n}$ such that either $\nu'$ is obtained by adding a box to an outer corner of $\mu$ or $\nu'=\mu$.
For the definitions of an inner corner and an outer corner of a Young diagram, see~\cite[Definition 2.8.1]{Sagan}.
 
\begin{proposition}[Branching rule] 
\label{prop:res}
For $\lambda\in\Lambda_{\leq n}$ and $\mu\in \Lambda_{\leq n-1}$, we have 
\begin{displaymath}
 {\rm Res}^{\CC R_{n}}_{\CC R_{n-1}}(V^{\lambda}_n)\cong \underset{\nu\in \lambda^{-,=}}{\bigoplus}V^{\nu}_{n-1},~~~ \mbox{ and }~~~~
 {\rm Ind}^{\CC R_{n}}_{\CC R_{n-1}}(V^{\mu}_{n-1})\cong \underset{\nu'\in \mu^{+,=}}{\bigoplus}V^{\nu'}_{n}.
\end{displaymath}

\end{proposition}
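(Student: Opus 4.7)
The strategy is to establish the restriction formula by a combinatorial decomposition of the tableau basis of $V^\lambda_n$, and then deduce the induction formula via the Frobenius reciprocity developed in Section~\ref{sec:ind_res}. Since the subalgebra $\CC R_{n-1}\subset\CC R_n$ is generated by $s_1,\ldots,s_{n-2}$ and $P_1$ (by~\eqref{eq:rgen_set} applied to $R_{n-1}$), all module-theoretic checks reduce to these generators.

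For the restriction, I would partition the basis of $V^\lambda_n$ by the location (or absence) of the entry $n$. For each $\nu\in\lambda^{-,=}$, define $W_\nu$ as the span of those $v_L$ with $L\in\tau^\lambda_n$ such that removing $n$ from $L$ (or leaving $L$ alone, if $n\notin L$) produces a tableau of shape $\nu$. Explicitly, $W_\lambda$ corresponds to tableaux with $n\notin L$, and $W_\nu$ for $\nu$ obtained by removing an inner corner $c$ of $\lambda$ corresponds to tableaux placing $n$ in box $c$. Clearly $V^\lambda_n=\bigoplus_{\nu\in\lambda^{-,=}}W_\nu$. By~\eqref{eq:al_si}, the action of $s_i$ for $i\leq n-2$ only affects entries $i,i+1\leq n-1$, so it fixes the position or absence of $n$; by~\eqref{eq:P_1}, $P_1$ depends only on whether $1\in L$. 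Hence each $W_\nu$ is $\CC R_{n-1}$-stable, and the bijection $v_L\mapsto v_{L\setminus n}$ (or the identity when $\nu=\lambda$) gives a $\CC R_{n-1}$-module isomorphism $W_\nu\cong V^\nu_{n-1}$: the scalars $a_{L(i)}$ in~\eqref{eq:scalar} depend only on the contents of the boxes containing $i$ and $i+1$ for $i\leq n-2$, which are untouched by the bijection.

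For the induction, I would invoke the (modified) Frobenius reciprocity for $\CC R_{n-1}\subset\CC R_n$ established in Section~\ref{sec:ind_res}, which yields
\[
\dim\Hom_{\CC R_n}\bigl(\Ind^{\CC R_n}_{\CC R_{n-1}}V^\mu_{n-1},\,V^{\nu'}_n\bigr)=\dim\Hom_{\CC R_{n-1}}\bigl(V^\mu_{n-1},\,\Res^{\CC R_n}_{\CC R_{n-1}}V^{\nu'}_n\bigr).
\]
By the restriction formula just established, the right-hand side equals $1$ when $\mu\in(\nu')^{-,=}$, equivalently $\nu'\in\mu^{+,=}$, and $0$ otherwise. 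Semisimplicity of $\CC R_n$ then determines the multiplicities, and a dimension count against the total size of the induced module confirms that no other irreducibles contribute.

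The main obstacle is the $\CC R_{n-1}$-equivariance of the bijection $v_L\mapsto v_{L\setminus n}$ in the linear-combination branch of~\eqref{eq:al_si} (the case $i,i+1\in L$), where content-dependent scalars appear; one must verify that removing $n$ from its box leaves invariant the contents of all boxes with entries $\leq n-1$, and that the partner tableau $L'$ and its image $L'\setminus n$ correspond correctly. Apart from this, the remainder of the argument is combinatorial bookkeeping, once the modified Frobenius reciprocity is available from Section~\ref{sec:ind_res}.
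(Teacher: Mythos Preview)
The paper does not give its own proof of this proposition: it is simply cited as \cite[Corollary 3.3]{Hal04}. Your restriction argument --- partitioning the tableau basis of $V^\lambda_n$ according to the position (or absence) of the entry $n$, and checking that each piece is $\CC R_{n-1}$-stable and isomorphic to the appropriate $V^\nu_{n-1}$ --- is correct and is in fact exactly how Halverson proves it (specialised at $q=1$). The content check you flag as the ``main obstacle'' is immediate: removing the entry $n$ from its box does not change the coordinates of any other box, so the scalars $a_{L(i)}$ in~\eqref{eq:scalar} for $i\leq n-2$ are preserved, and $s_iL'=(s_iL)\setminus n$ since $s_i$ for $i\leq n-2$ commutes with removal of $n$.

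There is one genuine confusion in your induction step. You invoke ``the (modified) Frobenius reciprocity \ldots\ established in Section~\ref{sec:ind_res}'', but Proposition~\ref{prop:induct_restrict} concerns the \emph{modified} functors $\widehat{\Ind}$ and $\widehat{\Res}$ (built from $\mu^+$ and $\lambda^-$, without the ``$=$'' option), not the ordinary $\Ind$ and $\Res$ appearing in your displayed formula. What you actually need is the standard adjunction $\Hom_{\CC R_n}(\CC R_n\otimes_{\CC R_{n-1}}W,\,V)\cong\Hom_{\CC R_{n-1}}(W,\Res V)$, which holds for any inclusion of algebras and requires no input from Section~\ref{sec:ind_res}. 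With that correction the induction formula follows cleanly from your restriction result and semisimplicity. Citing Section~\ref{sec:ind_res} here would also be logically awkward, since Proposition~\ref{prop:tensor_identity} in that section already uses Proposition~\ref{prop:res}.
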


From Proposition~\ref{prop:res}, we conclude that the Bratteli diagram for the tower of algebras 
\begin{equation}\label{eq:tower_rook}
\CC \subset \CC R_1\subset \CC R_2 \subset  \cdots \CC R_n \subset \cdots
\end{equation} is a simple graph. Figure \ref{fig:BDRM} is the Bratteli diagram for the tower~\eqref{eq:tower_rook} up to level $3$. (The definition of a Bratteli diagram can be found in~\cite[p. 584]{vo1}).
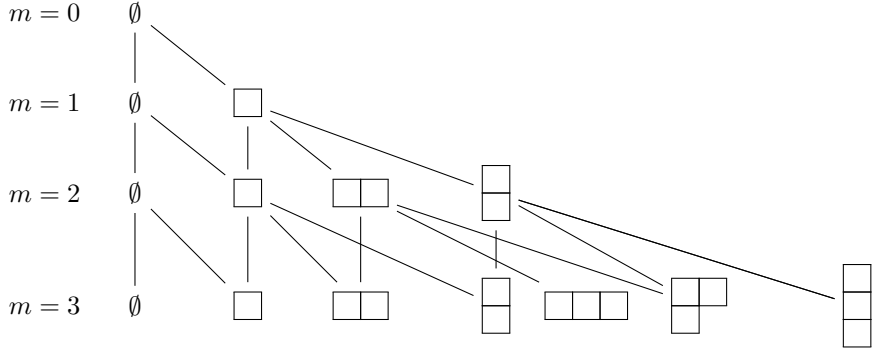
\begin{figure}[ht]
 \begin{tikzpicture}[scale=0.6]
 \node (a) at (-2,0) {$m=0$};
 \node (b) at (-2,-2) {$m=1$};
 \node (c) at (-2,-4) {$m=2$};
 \node (d) at (-2,-6.5) {$m=3$};
  \node (1) at (0,0) {$\emptyset$};
  \node (1') at (0,-2) {$\emptyset$};
  \node (2) at  (2.5,-2) {$\ytableausetup{boxsize=1em} \begin{ytableau}
                                                        \empty \\
                                                       \end{ytableau}$};
  \node (2') at (0,-4) {$\emptyset$};
  \node (3) at  (2.5,-4) {$\ytableausetup{boxsize=1em} \begin{ytableau}
                                                        \empty \\
                                                       \end{ytableau}$};
  \node (4) at (5,-4) {$\ytableausetup{boxsize=1em} \begin{ytableau}
                                                        \empty & \empty \\
                                                       \end{ytableau}$};
  \node (5) at (8,-4) {$\ytableausetup{boxsize=1em} \begin{ytableau}
                                                        \empty \\
                                                        \empty \\
                                                       \end{ytableau}$};
  \node (5') at (0,-6.5) {$\emptyset$};
  \node (6) at (2.5,-6.5) {$\ytableausetup{boxsize=1em} \begin{ytableau}
                                                        \empty \\
                                                       \end{ytableau}$};
  \node (7) at (5,-6.5) {$\ytableausetup{boxsize=1em} \begin{ytableau}
                                                        \empty & \empty \\
                                                       \end{ytableau}$};
  \node (8) at (8,-6.5) {$\ytableausetup{boxsize=1em} \begin{ytableau}
                                                        \empty \\
                                                        \empty \\
                                                       \end{ytableau}$};
  \node (9) at (10,-6.5) {$\ytableausetup{boxsize=1em} \begin{ytableau}
                                                        \empty & \empty & \empty\\
                                                       \end{ytableau}$};
  \node (10) at (12.5,-6.5) {$\ytableausetup{boxsize=1em} \begin{ytableau}
                                                        \empty & \empty \\
                                                        \empty \\
                                                       \end{ytableau}$};
  \node (11) at (16,-6.5) {$\ytableausetup{boxsize=1em} \begin{ytableau}
                                                        \empty \\
                                                        \empty \\
                                                        \empty \\
                                                       \end{ytableau}$};
  \draw (1) -- (1') -- (2') -- (5');
  \draw (1) -- (2) -- (3) -- (6);
  \draw (2) -- (4) -- (7);
  \draw (2) -- (5) -- (11);
  \draw (3) -- (7);
  \draw (3) -- (8);
  \draw (1') -- (3);
  \draw (4) -- (9);
  \draw (4) -- (10);
  \draw (2') -- (6);
  \draw (5) -- (8);
  \draw (5) -- (10);
  \draw (5) -- (11);
 \end{tikzpicture}
 \caption{Bratteli diagram, up to level $3$, for the tower of rook monoid algebras}
 \label{fig:BDRM}
 \end{figure} 

\begin{convention}\label{con}
For a graph $\mathcal{G}$ appearing in this article, a path in $\mathcal{G}$ from $\lambda$ at the level $r$ to $\mu$ at the level $s(>r)$ is a tuple of vertices $(\nu_{t_{1}},\nu_{t_2},\ldots,\nu_{t_{y}})$ at the consecutive levels $t_1,t_2,\ldots, t_y$ such that $t_1=r, t_{y}=s, \nu_{t_{1}}=\lambda$, $\nu_{t_{y}}=\mu$ and, for $1\leq i\leq y-1$, 
there is an edge between $\nu_{t_{i}}$ and  $\nu_{t_{i+1}}$.
\end{convention}

The following lemma is analogous to the correspondence between the paths in the Bratteli diagram for the tower of symmetric groups and the standard tableaux.
\begin{lemma}\label{lm:paths}
	For $\lambda\in \Lambda_{\leq n}$, a path from the vertex $\emptyset$ at the level $m=0$ to the vertex $\lambda$ at the level $m=n$ in the Bratteli diagram for the tower~\eqref{eq:tower_rook} corresponds to a unique $n$-standard tableau in  $\tau_{n}^{\lambda}$. Moreover, this correspondence is a bijection.
\end{lemma}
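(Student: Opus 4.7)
The plan is to set up the bijection explicitly in both directions, using the branching rule of Proposition \ref{prop:res} to read off how boxes of $\lambda$ are labeled. By that branching rule, an edge in the Bratteli diagram for \eqref{eq:tower_rook} from a vertex $\nu_{i-1}$ at level $i-1$ to a vertex $\nu_i$ at level $i$ exists if and only if $\nu_{i-1}\in\nu_i^{-,=}$, i.e., either $\nu_{i-1}=\nu_i$ or $\nu_{i-1}$ is obtained from $\nu_i$ by removing a single inner corner box. A path $(\nu_0,\nu_1,\ldots,\nu_n)$ with $\nu_0=\emptyset$ and $\nu_n=\lambda$ therefore corresponds to a sequence in which, at each step, one either adds a box at an outer corner or does nothing.

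From such a path I would construct a filling $L$ of $\lambda$ by the following rule: for each $i$ with $1\le i\le n$, if $\nu_i$ is obtained from $\nu_{i-1}$ by adding a box $b_i$, then place the entry $i$ in that box; otherwise do nothing. Since $|\lambda|$ equals the number of indices $i$ at which a box is added, every box of $\lambda$ is filled exactly once, and all entries lie in $\{1,\ldots,n\}$ and are distinct. To check the standardness condition, suppose two boxes $b_i$ and $b_j$ of $L$ are in the same row with $b_j$ to the right of $b_i$, or in the same column with $b_j$ below $b_i$. Then at the time $b_j$ was added, $b_i$ was already present (otherwise $b_j$ would not be an outer corner of $\nu_{j-1}$), forcing $i<j$. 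Hence entries strictly increase along each row and column, so $L\in\tau_n^\lambda$.

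Conversely, given $L\in\tau_n^\lambda$, I would define $\nu_i$ to be the subdiagram of $\lambda$ consisting of all boxes with entries $\le i$. The row/column monotonicity of $L$ ensures that $\nu_i$ is itself a Young diagram, and that the unique box distinguishing $\nu_i$ from $\nu_{i-1}$, when present, is an outer corner of $\nu_{i-1}$ (equivalently, an inner corner of $\nu_i$). Therefore $\nu_{i-1}\in\nu_i^{-,=}$, and the resulting sequence is a legitimate path from $\emptyset$ at level $0$ to $\lambda$ at level $n$. The two constructions are inverse to each other essentially by inspection: the path obtained from $L$ records, at step $i$, exactly the box containing $i$ in $L$, and the tableau obtained from a path places $i$ exactly in the box added at step $i$.

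The argument is almost entirely bookkeeping, so I do not expect a genuine obstacle; the one subtle point that deserves explicit checking is the outer-corner condition in both directions, i.e., verifying that the "add a box at an outer corner" steps in the path are in bijection with the entries actually appearing in $L$, while the "do nothing" steps correspond to indices in $\{1,\ldots,n\}$ missing from $L$. Once this is articulated carefully, the lemma follows, and the correspondence is manifestly bijective.
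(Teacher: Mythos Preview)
Your argument is correct and is the standard way to establish this bijection. Note that the paper does not actually supply a proof of this lemma: it is stated immediately after the remark that it ``is analogous to the correspondence between the paths in the Bratteli diagram for the tower of symmetric groups and the standard tableaux,'' and then illustrated by an example. Your write-up fills in exactly the expected details, so there is nothing to compare against; your proof is fine as written.
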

 
 \begin{example}
 	Using Lemma~\ref{lm:paths}, for $m=3$ and $\lambda=\ytableausetup{boxsize=0.7em}
\begin{ytableau}
\empty & \empty \\
\end{ytableau}$,  the paths $(\emptyset,\ytableausetup{boxsize=0.7em}
\begin{ytableau}
\empty \\
\end{ytableau},\ytableausetup{boxsize=0.7em}
\begin{ytableau}
\empty & \empty \\
\end{ytableau},\ytableausetup{boxsize=0.7em}
\begin{ytableau}
\empty & \empty \\
\end{ytableau}\,)$, $(\emptyset,\ytableausetup{boxsize=0.7em}
\begin{ytableau}
\empty \\
\end{ytableau},\ytableausetup{boxsize=0.7em}
\begin{ytableau}
\empty \\
\end{ytableau},\ytableausetup{boxsize=0.7em}
\begin{ytableau}
\empty & \empty \\
\end{ytableau}\,)$, and $(\emptyset,\emptyset,\ytableausetup{boxsize=0.7em}
\begin{ytableau}
\empty \\
\end{ytableau},\ytableausetup{boxsize=0.7em}
\begin{ytableau}
\empty & \empty \\
\end{ytableau}\,)$  correspond to the $3$-standard tableaux
$\ytableausetup{boxsize=0.7em}
\begin{ytableau}
\scriptstyle 1 & \scriptstyle 2 \\
\end{ytableau}$, $\ytableausetup{boxsize=0.7em}
\begin{ytableau}
\scriptstyle 1 & \scriptstyle 3 \\
\end{ytableau}$, and 
$\ytableausetup{boxsize=0.7em}
\begin{ytableau}
\scriptstyle 2 & \scriptstyle 3 \\
\end{ytableau}$, respectively.
\end{example}

\begin{remark}	For $\lambda\in\Lambda_{\leq n}$, by Theorem~\ref{thm:irrep}, Proposition~\ref{prop:res}, and Lemma~\ref{lm:paths}, we see that $\{v_{L}\mid L\in\tau_{n}^{\lambda}\}$ is the Gelfand--Tsetlin basis of the irreducible representation $V^{\lambda}_{n}$ of $\CC R_n$. See Section~\ref{sec:intro} for Gelfand--Tsetlin basis and Gelfand--Tsetlin vectors.
\end{remark}

\subsection{Jucys--Murphy elements of the rook monoid algebra}\label{sec:JM_rook}

Halverson~\cite[Section 3.1]{Hal04} defined Jucys--Murphy elements of the $q$-rook monoid algebra and described their actions on Gelfand--Tsetlin vectors \cite[Proposition 3.5]{Hal04}. It can be seen in Example~\ref{ex:JM} that when $q=1$, actions of the elements in~\cite[Section 3.1]{Hal04} on Gelfand--Tsetlin vectors do not distinguish the nonisomorphic irreducible representations of $\CC R_n$ (see Section~\ref{sec:intro} for the properties of Jucys--Murphy elements). In this section, we close this gap for $q=1$.

 For  $2 \leq i \leq n$, define $Q_i = (2, i-1)(1, i)P_2(1,i)(2,i-1)$.\label{pa:Q_i}
\begin{lemma}\label{lm:P2} 
 Let $\lambda\in\Lambda_{\leq n}$. Then for the Gelfand--Tsetlin vector $v_{L}\in V^{\lambda}_n$ corresponding to $L\in \tau^{\lambda}_{n}$ and for $2\leq i\leq n$, we have
	\begin{align}\label{al:Q}
	Q_iv_{L}=\begin{cases}
	v_{L} & \text{if } i-1,i\notin L,\\
	0 & \text{otherwise}.
	\end{cases}
	\end{align}
\end{lemma}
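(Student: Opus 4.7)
The plan is to exploit the fact that $Q_i$ is itself a diagonal element of the monoid $R_n$, so that one can transfer the known formulas for the basic diagonal elements $P_j$ to $Q_i$. First observe that $(1,i)$ and $(2,i-1)$ are disjoint transpositions when $i\geq 3$, while for $i=2$ they coincide and therefore $Q_2=P_2$, in which case the lemma reduces immediately to \eqref{eq:ac_P_j}. For $i\geq 3$, the product $\sigma := (2,i-1)(1,i)$ is an involution, so $Q_i=\sigma P_2 \sigma$. Since $P_2\in R_n$ is literally the diagonal matrix with zeros at positions $(1,1)$ and $(2,2)$, and conjugation of a diagonal matrix by the permutation matrix $\sigma$ simply permutes its diagonal entries according to $\sigma$, the element $Q_i$ is the diagonal matrix in $R_n$ whose only vanishing diagonal entries occur at $(i-1,i-1)$ and $(i,i)$.

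For $1\leq j\leq n$, I would then introduce $M_j\in R_n$ as the diagonal matrix with a single $0$ at position $(j,j)$ and $1$s elsewhere, so that $M_1=P_1$ and the same conjugation observation yields $M_j=s_{j-1}M_{j-1}s_{j-1}$ for $j\geq 2$. As commuting diagonal matrices, $M_{i-1}M_i = Q_i$, so the lemma follows at once from the more general claim
\[
M_j v_L = \begin{cases} v_L & \text{if } j\notin L,\\ 0 & \text{otherwise,}\end{cases}
\]
which I would prove by induction on $j$, with the base case $j=1$ being precisely \eqref{eq:P_1}.

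For the inductive step, organize the Gelfand--Tsetlin basis by \emph{content sets}: for each subset $S\subseteq\{1,\ldots,n\}$ of size $|\lambda|$, let $W_S$ be the $\CC$-span of those $v_L$ whose set of entries equals $S$, so that $V^\lambda_n=\bigoplus_S W_S$. A direct inspection of \eqref{eq:al_si} shows that $s_{j-1}$ carries $W_S$ into $W_{s_{j-1}(S)}$, where $s_{j-1}$ acts on subsets by the transposition $(j-1,j)$. Since $j\in S$ if and only if $j-1\in s_{j-1}(S)$, the inductive hypothesis applied to $M_{j-1}$ on $W_{s_{j-1}(S)}$ forces $M_{j-1}(s_{j-1}v_L)=0$ when $j\in L$ and $M_{j-1}(s_{j-1}v_L)=s_{j-1}v_L$ when $j\notin L$. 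Applying $s_{j-1}$ once more and using the relation $s_{j-1}^2=\mathrm{id}$ on $V^\lambda_n$ (which follows from \eqref{eq:al_si} together with the identity $a_{s_{j-1}L}=-a_L$ on the seminormal coefficients and $a^2+(1+a)(1-a)=1$) then closes the induction.

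The main obstacle is the fourth case of \eqref{eq:al_si}, where both $j-1,j\in L$ and $s_{j-1}$ acts as a nontrivial $2\times 2$ matrix mixing $v_L$ with $v_{s_{j-1}L}$. The saving grace is that in precisely this situation both of these vectors lie in the \emph{same} subspace $W_S$, so by the inductive hypothesis $M_{j-1}$ acts on all of $W_S$ by a single common scalar; the two-fold conjugation $s_{j-1}M_{j-1}s_{j-1}$ therefore collapses to that same scalar on $v_L$ irrespective of the mixing, which is what makes the induction robust against the analytic subtleties of the seminormal formulas.
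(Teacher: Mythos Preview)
Your proof is correct and takes a genuinely different route from the paper's. The paper argues by induction on $i$ directly for $Q_i$, using the recursion $Q_i = s_{i-2}s_{i-1}Q_{i-1}s_{i-1}s_{i-2}$, which forces a case analysis on the membership of $i-2$, $i-1$, and $i$ in $L$. Your factorisation $Q_i = M_{i-1}M_i$ reduces the problem to the action of a single $M_j$, and the content-set decomposition $V^\lambda_n = \bigoplus_S W_S$ lets you treat all cases uniformly: $s_{j-1}$ carries $W_S$ into $W_{s_{j-1}(S)}$, and by the inductive hypothesis $M_{j-1}$ acts on each $W_{S'}$ as the scalar $0$ or $1$ according to whether $j-1\in S'$, so the seminormal mixing in the fourth case of \eqref{eq:al_si} is harmless. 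Two small remarks: your $M_j$ is exactly the paper's $\gamma_j = 1 - X_j$, whose action on the Gelfand--Tsetlin basis is recorded (via \cite{Hal04}) in \eqref{eq:jm_gamma}, so you are in effect giving an independent proof of that identity; and you need not re-derive $s_{j-1}^2=\mathrm{id}$ from the seminormal formulas, since it already holds in $\CC R_n$ and $V^\lambda_n$ is a representation. What your approach buys is a cleaner induction with less bookkeeping and a conceptual explanation of why the non-diagonal action of $s_{j-1}$ causes no trouble; the paper's approach avoids the auxiliary elements $M_j$ at the cost of a longer case-by-case verification.
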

\begin{proof}
	
	 Observe that $Q_i\in R_n$ is the diagonal matrix whose $(i-1)$-th and 
	$i$-th diagonal entries are $0$ and the remaining diagonal entries are $1$.
	It can be seen that for $i\geq 3$, we have $Q_i=s_{i-2}s_{i-1} Q_{i-1} s_{i-1}s_{i-2}$. We use induction on $i$ to prove~\eqref{al:Q}. For $i=2$,~\eqref{al:Q} follows from~\eqref{eq:ac_P_j}. For $i\geq 3$, suppose that~\eqref{al:Q} is true for some $i-1$ and we prove it for $i$.

\noindent{ Case(i)}:  $i-1\notin L$ and $i\notin L$.

\noindent{ Subcase(a)}:  $i-2\in L$. Then $s_{i-1}s_{i-2}{L}$ is  an $n$-standard tableau which does not contain both $i-2$ and $i-1$. Using~\eqref{eq:al_si} and the induction hypothesis, $Q_{i-1} s_{i-1}s_{i-2}v_{L}=v_{s_{i-1}s_{i-2}L}$. Thus $Q_{i-1}v_{L}=v_{L}$.

\noindent{ Subcase(b)}:  $i-2\notin L$. Then $s_{i-1}s_{i-2}{L}=L$, and using~\eqref{eq:al_si}
$s_{i-1}s_{i-2}v_{L}=v_L$. Since $i-2\notin L$ and $i-1\notin L$, by the induction hypothesis we get 
$Q_{i-1}s_{i-1}s_{i-2}v_{L}=v_{L}$, and so $Q_{i}v_{L}=v_{L}$.

\noindent{ Case(ii)}: $i-1\in L$ and $i\notin L$. Then using~\eqref{eq:al_si}
$s_{i-1}s_{i-2}v_{L}$ is a linear combination of $v_{L'}$ for $L'\in\tau_{n}^{\lambda}$ with
$i-2\in L'$. So by the induction hypothesis $Q_{i-1}s_{i-1}s_{i-2}v_{L}=0$, and this implies 
$Q_iv_{L}=0$. 

The remaining cases are when $i-1\notin L$, $i\in L$ or  when $i-1\in L, i\in L$. In both these cases, by following similar reasoning as in Case(ii), we have $Q_{i-1}s_{i-1}s_{i-2}v_{L}=0$, which implies $Q_iv_{L}=0$. 
 \end{proof}

From~\cite[Section 3.1]{Hal04}, we have the following elements of $\CC R_n$:
\begin{align*} 
X_{1}&=1-P_{1},  \text{ and }\\
X_{i}&=s_{i-1}X_{i-1}s_{i-1}, \text{ for } 2\leq i\leq n.
\end{align*}

\phantomsection For $1 \leq i \leq n$, define $\gamma_i:=1-X_i\label{pa:gamma}$. Now, we define the new elements in $\CC R_n$ as follows: 
\begin{align}\label{eq:New_JM}
\tilde{X}_1&=0, \mbox{ and } \nonumber\\
\tilde{X}_{i}=s_{i-1}\tilde{X}_{i-1}s_{i-1}+s_{i-1}&-s_{i-1}\gamma_{i-1}-\gamma_{i-1}s_{i-1} + Q_i,  \mbox{ for } 2\leq i\leq n.
\end{align}
 In the following theorem, we describe the actions of elements~\eqref{eq:New_JM} on the Gelfand--Tsetlin vectors. 

\begin{theorem}\label{thm:action}
Let $\lambda\in\Lambda_{\leq n}$. Then for the Gelfand--Tsetlin vector $v_{L}\in V^{\lambda}_n$ corresponding to $L\in \tau^{\lambda}_{n}$ and for $1\leq i\leq n$, we have
\begin{align}\label{eq:jm_gamma}
X_{i}v_{L}=\begin{cases}
	v_{L} & \text{if } i\in L,\\
	0 & \text{otherwise,}
	\end{cases}
	\end{align} 
	 \begin{align}\label{eq:new_jm}
\text{and \, \,\,}\tilde{X}_{i}v_{L}=\begin{cases}
\ct(L(i)) v_{L} & \text{if } i\in L,\\
0 & \text{otherwise.}
\end{cases}
\end{align}

\end{theorem}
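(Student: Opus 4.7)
The plan is to prove both formulas simultaneously by induction on $i$, using the recursive definitions $X_i = s_{i-1} X_{i-1} s_{i-1}$ and \eqref{eq:New_JM} together with the explicit actions of $s_{i-1}$ from \eqref{eq:al_si}, of $P_1$ (hence of $\gamma_j = 1-X_j$ once the $X$-formula is established), and of $Q_i$ from Lemma \ref{lm:P2}.

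First I would handle \eqref{eq:jm_gamma}. The base case $X_1 = 1 - P_1$ is immediate from \eqref{eq:P_1}. For the inductive step, I would split into the four cases determined by whether $i-1$ and $i$ lie in $L$ or not. Three of these cases are easy: in each one either $s_{i-1} v_L = v_{s_{i-1} L}$ or $s_{i-1}v_L = v_L$, and the inductive hypothesis applied after the first $s_{i-1}$ either kills the vector or returns it unchanged, after which the outer $s_{i-1}$ is reversed. The only subtle case is $i-1, i \in L$: here $s_{i-1}v_L = a v_L + (1+a) v_{L'}$ with $a = a_{L(i-1)}$, and since both $L$ and $L'$ (when it exists) contain $i-1$, the inductive hypothesis gives $X_{i-1} s_{i-1} v_L = s_{i-1} v_L$. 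Using that $a_{L'(i-1)} = -a$, the final application of $s_{i-1}$ produces $v_L$ after the coefficients $a^2 + (1+a)(1-a) = 1$ and $a(1+a) - a(1+a) = 0$ collapse cleanly.

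Next I would prove \eqref{eq:new_jm}. The base case $\tilde X_1 = 0$ matches since $\ct(L(1)) = 0$ when $1 \in L$. For the inductive step I would again run the four cases. Cases (i) $i-1 \in L, i \notin L$ and (iii) $i-1, i \notin L$ produce no $\tilde X_{i-1}$ contribution; in (i) the two terms $s_{i-1}$ and $-\gamma_{i-1} s_{i-1}$ cancel while $s_{i-1}\gamma_{i-1} v_L = 0$ and $Q_i v_L = 0$; in (iii) the combinatorial identity $v_L - v_L - v_L + v_L = 0$ forces $\tilde X_i v_L = 0$, where crucially the $+Q_i$ term supplies the missing $v_L$ thanks to Lemma \ref{lm:P2}. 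Case (ii) $i-1 \notin L, i \in L$ is the one where $\tilde X_{i-1}$ contributes the content: $\tilde X_{i-1} v_{s_{i-1} L} = \ct(L(i)) v_{s_{i-1} L}$ since the box of $i-1$ in $s_{i-1} L$ is the box of $i$ in $L$, and the $s_{i-1} - \gamma_{i-1} s_{i-1} - s_{i-1} \gamma_{i-1}$ piece cancels out.

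The heart of the argument, and what I anticipate as the main obstacle, is case (iv) where $i-1, i \in L$ and $s_{i-1}L = L' \in \tau_n^{\lambda}$. Here one must expand $s_{i-1} \tilde X_{i-1} s_{i-1} v_L$ as a $2\times 2$ action on the span of $\{v_L, v_{L'}\}$, using $\tilde X_{i-1} v_L = \ct(L(i-1)) v_L$, $\tilde X_{i-1} v_{L'} = \ct(L'(i-1)) v_{L'} = \ct(L(i)) v_{L'}$, and the reciprocity $a_{L'(i-1)} = -a$ with $a = 1/(\ct(L(i)) - \ct(L(i-1)))$. Collecting the $v_L$- and $v_{L'}$-coefficients and adding the surviving contribution $s_{i-1} v_L = a v_L + (1+a) v_{L'}$ (while $\gamma_{i-1}$ kills everything because both $L$ and $L'$ contain $i-1$, and $Q_i v_L = 0$), the $v_{L'}$-coefficient reduces via $a(1+a)(\ct(L(i-1)) - \ct(L(i))) + (1+a) = -(1+a) + (1+a) = 0$, and the $v_L$-coefficient becomes $a^2 \ct(L(i-1)) + (1-a^2)\ct(L(i)) + a = \ct(L(i))$ using $a^2(\ct(L(i-1)) - \ct(L(i))) + a = 0$. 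The degenerate subcase where $i-1,i$ are in the same row or column (so $a = \pm 1$ and $v_{L'} = 0$) is handled by observing that $\ct(L(i)) = \ct(L(i-1)) + a$. These two algebraic identities are exactly where the carefully chosen correction terms in \eqref{eq:New_JM} pay off.
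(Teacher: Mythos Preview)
Your proposal is correct and follows essentially the same approach as the paper: induction on $i$ with a four-way case split according to whether $i-1$ and $i$ lie in $L$, using \eqref{eq:al_si}, the formula for $\gamma_{i-1}$, and Lemma~\ref{lm:P2}, culminating in the same $2\times 2$ computation in the case $i-1,i\in L$. The only notable difference is that the paper cites \eqref{eq:jm_gamma} directly from \cite[Proposition~3.5]{Hal04} rather than reproving it, whereas you supply an independent inductive proof of that formula as well; your extra argument is correct and self-contained.
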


\begin{proof} Equation~\eqref{eq:jm_gamma} is proved in \cite[Proposition 3.5]{Hal04}. From~\eqref{eq:jm_gamma}, we have
	\begin{align}\label{eq:jm}
	\gamma_{i}v_{L}=\begin{cases}
	v_{L} & \text{if } i\notin L,\\
	0 & \text{otherwise}.
	\end{cases}
	\end{align}
	We prove \eqref{eq:new_jm} using induction on $i$. For $i=1$, the result is true as $\tilde{X}_{1}=0$ and if $1\in L$, then $\ct(L(1))=0$.
	Assume that~\eqref{eq:new_jm} holds for some $i-1$, where $i\geq 3$, and we prove \eqref{eq:new_jm} for $i$.

\noindent{ Case(i)}: $i-1\notin L$ and  $i\notin L$. Then using~\eqref{eq:al_si} $s_{i-1}v_{L}=v_{L}$ and because $i-1\notin L$ so
by the induction hypothesis, $s_{i-1}\tilde{X}_{i-1}s_{i-1}v_{L}=0$.
By~\eqref{eq:jm} $\gamma_{i-1}s_{i-1} v_{L}=v_{L}$ and also $s_{i-1}\gamma_{i-1}v_{L}=v_{L}$, by Lemma~\ref{lm:P2} $Q_{i}v_{L}=v_{L}$. Therefore $\tilde{X}_{i}v_{L}=0$.
\vspace{0.2cm}

\noindent{ Case(ii)}: $i-1\in L$ and $i\notin L$. Since $i-1\notin s_{i-1}L$ and $s_{i-1}v_{L}=v_{s_{i-1}L}$, by the induction hypothesis, we have 
$s_{i-1}\tilde{X}_{i-1}s_{i-1}v_{L}=0$. Since $i-1\in L$, by~\eqref{eq:jm}, $s_{i-1}\gamma_{i-1}v_{L}=0$, and by Lemma~\ref{lm:P2} $Q_{i}v_{L}=0$. Again by~\eqref{eq:jm}, $\gamma_{i-1}s_{i-1}v_{L}=v_{s_{i-1}L}$. Combining all of these, we have $\tilde{X}_{i}v_{L}=0$.
 \vspace{0.2cm}
 
\noindent {Case(iii)}: $i-1\notin L$ and $i\in L$. Since $i-1\in s_{i-1}L$ such that $(s_{i-1}L)(i-1)=L(i)$ and also 
$s_{i-1}v_{L}=v_{s_{i-1}L}$, by the induction hypothesis we have $s_{i-1}\tilde{X}_{i-1}s_{i-1}v_{L}=\ct(L(i))v_{L}$. By~\eqref{eq:jm}
$\gamma_{i-1}s_{i-1}v_{L}=0$ and $s_{i-1}\gamma_{i-1}v_{L}=v_{s_{i-1} L}$, and by Lemma~\ref{lm:P2} $Q_{i}v_{L}=0$. Combining all of these, we have, $\tilde{X}_{i}v_{L}=\ct({L(i)})v_{L}+v_{s_{i-1} L}-v_{s_{i-1}L}=\ct({L(i)})v_{L}$.

\vspace{0.2cm}

\noindent{Case(iv)}: $i-1\in L$ and $i\in L$. Using~\eqref{eq:al_si}, we have $s_{i-1}v_{L}=a_{L(i-1)}v_{L}+(1+a_{L(i-1)})v_{L'}$ where $a_{L(i-1)}= \frac{1}{\ct(L(i))-\ct(L(i-1))} $, and  
$v_{L'}\neq 0$ if and only if $L'=s_{i-1}L\in\tau_{n}^{\lambda}$, which contains both $i-1,i$. By~\eqref{eq:jm}, $\gamma_{i-1}s_{i-1}v_{L}=0$ and $s_{i-1}\gamma_{i-1}v_{L}=0$, and by 
Lemma~\ref{lm:P2} $Q_{i}v_{L}=0$. Then we have $\tilde{X}_{i}v_{L}=s_{i-1}\tilde{X}_{i-1}s_{i-1}v_{L}+s_{i-1}v_{L}$.
\vspace{0.2cm}

\noindent{Subcase(a)}: $s_{i-1}L\notin\tau_{n}^{\lambda}$. Then, $i-1$ and $i$ occur consecutively either in the same
row or in the same column of $L$. So $a_{L(i-1)}=\pm 1$.  
Then,
\begin{align*}
s_{i-1}\tilde{X}_{i-1}s_{i-1}v_{L}+s_{i-1}v_{L}&= s_{i-1}\tilde{X}_{i-1} (a_{L(i-1)} v_{L})+ a_{L(i-1)} v_{L}\\
&= s_{i-1}(\ct(L(i-1))a_{L(i-1)} v_{L})+ a_{L(i-1)} v_{L}\\
&=  (\ct(L(i-1))({a_{L(i-1)}})^2+ a_{L(i-1)}) v_{L},
\end{align*}
where the second equality follows from the induction hypothesis.
Now,
\begin{align*}
(\ct(L(i-1))(a_{L(i-1)})^2+ a_{L(i-1)})&= a_{L(i-1)} (\ct(L(i-1)) a_{L(i-1)}+1)\\
&= \ct(L(i))(a_{L(i-1)})^2\\
&=\ct(L(i)), \text{ as $a_{L(i-1)}=\pm 1$}.
\end{align*}
The second equality follows by substituting $a_{L(i-1)}=\frac{1}{\ct(L(i))-\ct(L(i-1))}$.

\noindent{Subcase(b)}: $s_{i-1}L\in\tau_{n}^{\lambda}$. In this case $L'=s_{i-1}L$, so $s_{i-1}L'=L$. Observe that 
\begin{displaymath}
\ct(L'(i-1))=\ct(L(i)) \text{ and } a_{L'(i-1)}=-a_{L(i-1)}.
\end{displaymath}
Then,	\begin{align*}
	& s_{i-1}\tilde{X}_{i-1}s_{i-1}v_{L}+s_{i-1}v_{L}\\&= s_{i-1}\tilde{X}_{i-1}[a_{L(i-1)} v_{L}+(1+ a_{L(i-1)})v_{L'}]+
	a_{L(i-1)} v_{L}+(1+ a_{L(i-1)}) v_{L'}\\
	&= s_{i-1}[\ct(L(i-1)) a_{L(i-1)} v_{L}+(1+ a_{L(i-1)})\ct(L(i))v_{L'}]+  a_{L(i-1)} v_{L}+(1+ a_{L(i-1)}) v_{L'}\\
	&=\ct(L(i-1))a_{L(i-1)} [a_{L(i-1)} v_{L}+(1+ a_{L(i-1)}) v_{L'}]+\\ & \quad  \quad \ct(L(i))(1+a_{L(i-1)})[-a_{L(i-1)} v_{L'}+(1-a_{L(i-1)})v_{L}]+ 
	a_{L(i-1)} v_{L}+(1+ a_{L(i-1)}) v_{L'}\\
	& = Av_{L}+Bv_{L'}, 
	\end{align*} 
where	
\begin{align*}
A&=\ct(L(i-1))a_{L(i-1)}^2+ \ct(L(i))(1+ a_{L(i-1)})(1-a_{L(i-1)})+ a_{L(i-1)}, \text{ and}\\
B&= \ct(L(i-1))a_{L(i-1)}(1+a_{L(i-1)})-\ct(L(i)) (1+ a_{L(i-1)}) a_{L(i-1)}+(1+ a_{L(i-1)}).
\end{align*}
	
Now we simplify the expressions of $A$ and $B$. We have
\begin{align*}
A&= \ct(L(i-1)) a_{L(i-1)}^2+ \ct(L(i))(1+ a_{L(i-1)})(1- a_{L(i-1)})+a_{L(i-1)}\\
&= \ct(L(i-1)) a_{L(i-1)}^2+ \ct(L(i))(1- a_{L(i-1)}^2)+a_{L(i-1)}\\
&= \ct(L(i))+ a_{L(i-1)} [ \ct(L(i-1)) a_{L(i-1)}-\ct(L(i))a_{L(i-1)} +1]\\
&= \ct(L(i))+ a_{L(i-1)} \bigg(\frac{\ct(L(i-1))}{\ct(L(i))-\ct(L(i-1))}-\frac{\ct(L(i))}{\ct(L(i))-\ct(L(i-1))}+1\bigg)=\ct (L(i)), \text{ and }\\
& \\
 B&= \ct(L(i-1))a_{L(i-1)}(1+ a_{L(i-1)})-\ct(L(i))  (1+a_{L(i-1)})  a_{L(i-1)}+(1+ a_{L(i-1)})\\
&= (1+a_{L(i-1)})\bigg(\frac{\ct(L(i-1))}{\ct(L(i))-\ct(L(i-1))}-\frac{\ct(L(i))}{\ct(L(i))-\ct(L(i-1))}+1\bigg)=0.
\end{align*}
Therefore, we have $\tilde{X}_{i}v_{L}=\ct(L(i))v_{L}$.
\end{proof}

\begin{corollary}\label{coro:commutation} For $1\leq i,j\leq n$, we have $X_i X_j=X_j X_i, \text{ } X_i \tilde{X}_j=\tilde{X}_{j} X_i, \text{ and } \tilde{X}_i\tilde{X_j}=\tilde{X}_j\tilde{X}_i.$
	\end{corollary}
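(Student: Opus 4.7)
The plan is to read the commutativity directly off the action formulas from Theorem~\ref{thm:action}. By Theorem~\ref{thm:irrep}, the set $\{V^\lambda_n \mid \lambda \in \Lambda_{\leq n}\}$ is a complete set of pairwise nonisomorphic irreducible representations of the semisimple algebra $\CC R_n$, so we have an algebra embedding
\[
\CC R_n \hookrightarrow \bigoplus_{\lambda \in \Lambda_{\leq n}} \End(V^\lambda_n).
\]
Therefore it suffices to verify that the three pairs of operators commute on every Gelfand--Tsetlin vector $v_L$ with $L \in \tau^\lambda_n$, for each $\lambda \in \Lambda_{\leq n}$.

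The key observation is that the formulas \eqref{eq:jm_gamma} and \eqref{eq:new_jm} say that every $X_i$ and every $\tilde{X}_j$ acts \emph{diagonally} in the Gelfand--Tsetlin basis: $X_i v_L = \mathbf{1}[i \in L]\, v_L$ and $\tilde{X}_j v_L = \mathbf{1}[j \in L]\, \ct(L(j))\, v_L$. A direct case check then gives
\[
X_i X_j v_L = \mathbf{1}[i \in L]\, \mathbf{1}[j \in L]\, v_L = X_j X_i v_L,
\]
and similarly
\[
X_i \tilde{X}_j v_L = \mathbf{1}[i \in L]\, \mathbf{1}[j \in L]\, \ct(L(j))\, v_L = \tilde{X}_j X_i v_L,
\]
and
\[
\tilde{X}_i \tilde{X}_j v_L = \mathbf{1}[i \in L]\, \mathbf{1}[j \in L]\, \ct(L(i))\, \ct(L(j))\, v_L = \tilde{X}_j \tilde{X}_i v_L.
\]
(The conventions $\ct(L(i)) = 0$ when $i \notin L$ and $\tilde{X}_1 = 0$ in \eqref{eq:New_JM} keep all three formulas uniformly valid, including the edge cases $i = j$ and $i = 1$ or $j = 1$.)

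Since the three displayed identities hold on every Gelfand--Tsetlin basis vector of every irreducible $V^\lambda_n$, each of the three operator equalities holds in $\bigoplus_\lambda \End(V^\lambda_n)$, and hence in $\CC R_n$ via the embedding above. There is essentially no obstacle here beyond recording that the actions are simultaneously diagonal in a single basis; the technical work has already been absorbed into Theorem~\ref{thm:action}.
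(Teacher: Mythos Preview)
Your proof is correct and follows essentially the same approach as the paper: both arguments observe that Theorem~\ref{thm:action} makes every $X_i$ and $\tilde{X}_j$ act diagonally on the Gelfand--Tsetlin basis of each $V^\lambda_n$, and then pass to $\CC R_n$ via faithfulness (the paper phrases this as the left regular representation being faithful, you phrase it as the embedding $\CC R_n \hookrightarrow \bigoplus_\lambda \End(V^\lambda_n)$ coming from semisimplicity, which amounts to the same thing).
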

\begin{proof}
	Given $\lambda \in \Lambda_{\leq n}$,
	Theorem~\ref{thm:action} says that 
 each $X_i$ and each $\tilde{X}_j$, for $1\leq i,j\leq n$,  acts diagonally on the Gelfand--Tsetlin basis $\{v_{L}\mid L\in \tau^{\lambda}_n\}$
 of the irreducible representation $V^{\lambda}_n$ of $\CC R_n$. By considering the left regular representation of $\CC R_n$, which is a faithful representation, we obtain all three commutation results as stated in the corollary. 
\end{proof}

\begin{theorem}\label{thm:a}
	The elements $\kappa_{n}:=\sum_{i=1}^{n}X_{i}$ and $\tilde{\kappa}_{n}:=\sum_{i=1}^{n}\tilde{X}_{i}$ are in the center of  $\mathbb{C}R_{n}$. 
\end{theorem}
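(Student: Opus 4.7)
The plan is to use Theorem~\ref{thm:action} to show that both $\kappa_n$ and $\tilde{\kappa}_n$ act as scalars on every irreducible representation $V^\lambda_n$ of $\CC R_n$, and then invoke semisimplicity to conclude centrality.

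First I would fix $\lambda \in \Lambda_{\leq n}$ and an arbitrary Gelfand--Tsetlin vector $v_L \in V^\lambda_n$ with $L \in \tau^\lambda_n$. By~\eqref{eq:jm_gamma}, $X_i v_L = v_L$ when $i \in L$ and $X_i v_L = 0$ otherwise, so
\begin{equation*}
\kappa_n v_L \;=\; \sum_{i=1}^{n} X_i v_L \;=\; \bigl|\{i : i \in L\}\bigr|\, v_L \;=\; |\lambda|\, v_L.
\end{equation*}
Analogously, by~\eqref{eq:new_jm},
\begin{equation*}
\tilde{\kappa}_n v_L \;=\; \sum_{i \in L} \ct(L(i))\, v_L \;=\; \Bigl(\sum_{b \in \lambda} \ct(b)\Bigr)\, v_L.
\end{equation*}
In each case the scalar depends only on the shape $\lambda$, not on the particular tableau $L$, so $\kappa_n$ and $\tilde{\kappa}_n$ act on $V^\lambda_n$ as scalar multiples of the identity.

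Next I would conclude the argument using the semisimplicity of $\CC R_n$. Since $\{V^\lambda_n \mid \lambda \in \Lambda_{\leq n}\}$ is a complete set of pairwise nonisomorphic irreducibles by Theorem~\ref{thm:irrep}, $\CC R_n$ is isomorphic (as an algebra) to $\bigoplus_\lambda \End_{\CC}(V^\lambda_n)$. An element that acts as a scalar on every irreducible summand clearly commutes with every element of $\CC R_n$, hence lies in the center. Applying this to $\kappa_n$ and $\tilde{\kappa}_n$ completes the proof.

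I do not anticipate any substantial obstacle here: the work has already been done in Theorem~\ref{thm:action}, which translates the sums of Jucys--Murphy elements into shape-dependent scalars. The only small subtlety is making sure the argument that ``scalar on each irreducible implies central'' is invoked cleanly; this is standard for semisimple algebras and could alternatively be phrased via the faithful left regular representation exactly as in the proof of Corollary~\ref{coro:commutation}.
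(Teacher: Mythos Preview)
Your proof is correct, and it takes a genuinely different route from the paper's. The paper argues directly at the level of generators: it writes out $\kappa_n$ and $\tilde{\kappa}_n$ in terms of the $\gamma_i$, transpositions, and the diagonal matrices $Q_i$, and then checks by hand that each of these sums commutes with $P_1$ and with every $s_j$. This involves a fair amount of explicit bookkeeping (the relations \eqref{al:sj} and \eqref{al:1}--\eqref{al:5}), and in particular the argument for $\tilde{\kappa}_n$ is several paragraphs of conjugation identities.

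Your argument, by contrast, is purely representation-theoretic: Theorem~\ref{thm:action} already tells you the eigenvalues of $X_i$ and $\tilde{X}_i$ on every Gelfand--Tsetlin vector, and summing over $i$ immediately gives shape-dependent scalars. Semisimplicity then finishes the job in one line. This is shorter and more conceptual, and indeed the paper itself records exactly these scalar actions later as Theorem~\ref{thm:JMZ}(i). What the paper's direct computation buys is independence from the representation-theoretic machinery---it would survive in a setting where semisimplicity or the explicit irreducibles were unavailable---but in the present context your approach is entirely adequate and arguably preferable.
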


\begin{proof}
	
It is enough to prove that $\kappa_n$ and $\tilde{\kappa}_n$	commute with the generators $P_1, s_1,s_2,\ldots,s_{n-1}$ of $\CC R_n$. 
Let $1\leq i\leq n$. By definition, $X_i=(1-\gamma_i)$, so
	$\kappa_n=n.1-\bigg(\sum_{i=1}^{n}\gamma_i\bigg)$.  
	To prove $\kappa_n$ is a central element, we show that $\bigg(\sum_{i=1}^{n}\gamma_i\bigg)$ commutes  with $P_1, s_1,s_2,\ldots,s_{n-1}$. 
	
Let $s_{0}=id$. Using induction on $i$, we have $\gamma_i=s_{i-1}s_{i-2}\cdots s_1P_1 s_1\cdots s_{i-2}s_{i-1}$, which is a diagonal matrix with $(i,i)$-th entry zero and the remanining entries one. So $P_1\gamma_i=\gamma_i P_1$ which implies $P_1\bigg(\sum_{i=1}^{n}\gamma_i\bigg)=\bigg(\sum_{i=1}^{n}\gamma_i\bigg)P_1$.  For $1\leq j< n$, we have
\begin{align}\label{al:sj}
s_j\gamma_js_j=\gamma_{j+1} \text{ and } s_j\gamma_{j+1}s_{j}=\gamma_{j}; \quad	 s_j\gamma_i s_j=\gamma_i \text{ when } i\notin\{j,j+1\}.
\end{align}	
	Thus $s_j \bigg(\sum_{i=1}^{n}\gamma_i\bigg)=\bigg(\sum_{i=1}^{n}\gamma_i\bigg) s_j$ and $\kappa_n$ is a central element of $\CC R_n$.
	
	Now we proceed towards proving that $\tilde{\kappa}_n$ is a central element of $\CC R_n$. Recall that
	\begin{align*}
	\tilde{X}_1&=0 \text{ and} \text{ for }2\leq i\leq n,
	\tilde{X}_{i}=s_{i-1}\tilde{X}_{i-1}s_{i-1}+s_{i-1}-s_{i-1}\gamma_{i-1}-\gamma_{i-1}s_{i-1}+Q_i.
	\end{align*}
	
	For $i\in\{1,2,3\}$, it is easy to verify that $P_1 \tilde{X}_{i}=\tilde{X}_{i}P_1$. By taking base step $i=3$, we use 
	induction on $i$ to prove 
	\begin{equation}\label{eq:1}
	P_1 \tilde{X}_{i}=\tilde{X}_{i}P_1 \,\, \text{ for } 3\leq i\leq n.
	\end{equation} 
	Note that 
	\begin{align}
	P_1 s_{j}&=s_{j}P_1\,\, \text{ for } 2\leq j\leq n-1\label{eq:2}, \text{ and}\\
	P_1 D&= DP_1 \,\, \text{ for any diagonal matrix $D\in R_n$}\label{eq:3}.
	\end{align}
	Suppose~\eqref{eq:1} holds for $3\leq i<n$. Then using~\eqref{eq:2}, induction hypothesis, and that $Q_i$ and $\gamma_i$ are the diagonal matrices, we get $P_{1}\tilde{X}_{n}=\tilde{X}_{n}P_{1}$. Hence, $P_1\tilde{\kappa}_n=\tilde{\kappa}_nP_1$. 
	
Note that $\tilde{\kappa}_n= \sum_{i=2}^{n}\tilde{X}_{i}$. For $2\leq i\leq n$, using induction on $i$, we can check that
	\begin{displaymath}
	\tilde{X}_{i}= \sum_{r=1}^{i-1}\bigg((r,i)+ (2,r)(1,i)P_2(1,i)(2,r)-(r,i)\gamma_r-\gamma_{r}(r,i)\bigg)
	\end{displaymath}
	and so
	\begin{align*}
	\sum_{i=2}^{n}\tilde{X}_{i}=&\sum_{i=2}^{n}\sum_{r=1}^{i-1}\bigg((r,i)+(2,r)(1,i)P_2(1,i)(2,r)
	- (r,i)\gamma_r-\gamma_{r}(r,i)\bigg).
	\end{align*}
	
	Since $\sum_{i=2}^{n}\sum_{r=1}^{i-1}(r,i)\in \CC R_n$ is the sum of all transpositions in $S_n$, we have
	\begin{displaymath}
	s_j\sum_{i=2}^{n}\sum_{r=1}^{i-1}(r,i)=\sum_{i=2}^{n}\sum_{r=1}^{i-1}(r,i) s_j \text{ for all $1\leq j< n$}.
	\end{displaymath}
	
	Let $\mathcal{L}\subset R_n$ consist of diagonal matrices with exactly two diagonal entries zero and the remaining 
	diagonal entries  one. Then
	\begin{equation*}
	\sum_{i=2}^{n}\sum_{r=1}^{i-1}(2,r)(1,i)P_2(1,i)(2,r)=\sum_{M\in\mathcal{L}}M.
	\end{equation*}
	For $1\leq j<n$, since $\{s_jMs_j\mid M\in\mathcal{L}\}=\mathcal{L}$, we have
	\begin{align*}
	s_j\bigg(\sum_{i=2}^{n}\sum_{r=1}^{i-1}(2,r)(1,i)P_2(1,i)(2,r)\bigg)s_j&=s_j\bigg(\sum_{M\in\mathcal{L}}M\bigg)s_j
	= \sum_{M\in\mathcal{L}}s_jMs_j
	=\sum_{M\in \mathcal{L}} M.
	\end{align*}
	Now in order to conclude that $\tilde{\kappa}_n$ is a central element of $\CC R_n$,  it remains to show the following for $1\leq j< n$:
	\begin{equation}\label{eq:kappa_last}
	s_j\sum_{i=2}^{n}\sum_{r=1}^{i-1}\bigg((r,i)\gamma_r+\gamma_{r}(r,i)\bigg)=\sum_{i=2}^{n}\sum_{r=1}^{i-1}\bigg((r,i)\gamma_r+\gamma_{r}(r,i)\bigg)s_j.  
	\end{equation}
	 For $2\leq i\leq n$, $1\leq j<n$ and  $1\leq r\leq i-1$, the following relations can be observed using~\eqref{al:sj}:
	\begin{alignat}{3}
	 &\text{for } i\notin\{j,j+1\}\quad  && s_j(j,i)\gamma_j=(j+1,i)\gamma_{j+1}s_j,\quad && s_j\gamma_j(j,i)=\gamma_{j+1}(j+1,i)s_j,\label{al:1}\\
	 &\text{for } i\notin\{j,j+1\}\quad && s_j(j+1,i)\gamma_{j+1}=(j,i)\gamma_js_j,\quad && s_j\gamma_{j+1} (j+1,i)=\gamma_j(j,i)s_j,\label{al:2}\\
	&\text{for } r,i\notin\{j,j+1\}\quad	&& s_j(r,i)\gamma_r=(r,i)\gamma_r s_j,\quad &&s_j\gamma_r (r,i)=\gamma_r(r,i)s_j,\label{al:3}\\
	  &\text{for } r\notin\{j,j+1\}\quad  && s_j(r,j)\gamma_r=(r,j+1)\gamma_rs_j, \quad&&  s_j(r,j+1)\gamma_r=(r,j)\gamma_rs_j,\label{al:4}\\
	 &\text{for } r\notin\{j,j+1\}\quad  && s_j\gamma_r(r,j)=\gamma_r(r,j+1)s_j, \quad && s_j\gamma_r(r,j+1)=\gamma_r(r,j)s_j.\label{al:5}
  	\end{alignat}
	For $j=1$,~\eqref{eq:kappa_last} holds using~\eqref{al:1}--\eqref{al:3}.
	For $1<j< n$, we have 
	\begin{align*}
	\sum_{i=2}^{n}\sum_{r=1}^{i-1}\bigg((r,i)\gamma_r+\gamma_{r}(r,i)\bigg)
	= &\sum_{i=2,i\notin\{j, j+1\}}^{n}\sum_{r=1}^{i-1}\bigg((r,i)\gamma_r+\gamma_{r}(r,i)\bigg)+ (j,j+1)\gamma_j+\gamma_j(j,j+1) \\
	& \,+\sum_{r=1}^{j-1}\bigg((r,j)\gamma_r +\gamma_{r}(r,j)  
	+ (r,j+1)\gamma_r + \gamma_{r}(r,j+1)\bigg),
	\end{align*}
	and now it can be seen that \eqref{eq:kappa_last} holds using~\eqref{al:1}--\eqref{al:5}.
	\end{proof}

\begin{remark}
	We observe from Theorem~\ref{thm:action}, Corollary~\ref{coro:commutation} and Theorem~\ref{thm:a} that the elements $X_{i}$, $\tilde{X}_{i}$, for $1\leq i\leq n$, play the role of Jucys--Murphy elements of $\CC R_n$. Moreover, the Gelfand--Tsetlin vector $v_{L}$ of $\CC R_n$ is completely determined by  the eigenvalues of the action of  $X_i$ together with the eigenvalues of the action of $\tilde{X}_{i}$, for all $1\leq i\leq n$, on $v_{L}$.
\end{remark}

\begin{example}\label{ex:JM}
	Let $1\leq j\leq n$.  For a partition $\lambda$ of $n$, the eigenvalue of the action of $X_{j}$  on the Gelfand--Tsetlin vector $v_{L}$, for $L\in\tau_{n}^{\lambda}$, is one. So, given distinct partitions $\lambda$ and $\mu$ of $n$, the eigenvalues of the actions of $X_{i}$, for all $1\leq i\leq n$, on Gelfand--Tsetlin vectors do not distinguish the nonisomorphic irreducible representations $V^{\lambda}_n$ and $V^{\mu}_n$ of $\CC R_n$. Similarly,  the eigenvalue of the action of $\tilde{X}_j$ on $v_{L}$, for $L\in\tau_{n}^{(1)}$ is zero, and also $\tilde{X}_{j}$ acts as zero on the trivial representation of $\CC R_n$. Thus we need to consider the actions of both $X_{i}$ and $\tilde{X}_{i}$, for all $1\leq i\leq n$, on the Gelfand--Tsetlin vectors to distinguish the nonisomorphic irreducible representations of $\CC R_n$.
\end{example}

\subsection{Kronecker product}\label{sec:Kro}
This section begins with the computation of the Kronecker product of the character of $\CC^n$ with the character of an irreducible representation of $\CC R_n$ (Theorem~\ref{prop:charc}). For this, we first define some notation. Given $K=\{i_1,\ldots,i_r\}\subseteq \{1,2,\ldots,n\}$ such that $i_{1}<\cdots<i_r$, let $\theta_{K}$ be the element of $R_{n}$ whose nonzero rows and columns are indexed by the elements of $\{1,2,\ldots,r\}$ and $K$, respectively,  such that $\theta_{K}e_{i_j}=e_{j}$ for $1\leq j\leq r$. Let $\theta^{tr}_{K}$ denote the transpose of $\theta_{K}$. Following~\cite{Solomon}, we identify $S_{r}$ with the subgroup of monoid $R_n$ consisting of matrices $\sigma=(\sigma_{i,j})_{1\leq i,j\leq n}$ such that the submatrix $(\sigma_{p,q})_{1\leq p,q \leq r}$ is a permutation matrix of order $r$ and the entries $\sigma_{s,t} = 0$ for all $r+1 \leq s, t \leq n$.

\begin{remark}\label{rm:sym}
	For $1\leq r<n$, one can also identify the symmetric group on $r$ symbols with the subgroup of monoid $R_n$ consisting of matrices 
		$\sigma=(\sigma_{i,j})_{1\leq i,j\leq n}$ such that the submatrix $(\sigma_{p,q})_{1\leq p,q \leq r}$ is a permutation matrix of order $r$ and the entries $\sigma_{s,t} = 0$ for all $r+1 \leq s, t \leq n$ except the entry $\sigma_{r+1,r+1}$ which is $1$. We denote this copy of the symmetric group on $r$ symbols by $\tilde{S}_{r}$. Given $\tilde{\tau} \in \tilde{S}_{r}$, let $\tau$ denote the element in $S_r$ obtained from $\tilde{\tau}$ by making the $(r+1,r+1)$-th entry zero. Then the map $\tilde{S}_{r}\to S_{r}$, which sends $\tilde{\tau}$ to $\tau$, is a group isomorphism. For a partition $\lambda$ of $r$, suppose that $\chi_{\lambda}$ is the character of the irreducible representation of $S_{r}$ corresponding to $\lambda$. Then the
		 pullback of $\chi_{\lambda}$ is the character $\tilde{\chi}_{\lambda}$ of the irreducible representation of 
		 $\tilde{S}_{r}$ corresponding to $\lambda$. Also for $\tilde{\tau}\in \tilde{S}_{r}$, we have $\tilde{\chi}_\lambda(\tilde{\tau})=\chi_{\lambda}(\tau)$.
\end{remark}

For $\sigma\in R_{n}$, let $I(\sigma)$ be the  set of indices of nonzero rows of $\sigma$. Let $\{e_1,\ldots,e_n\}$ be the standard basis of $\CC^n$. For a subset $K$ of $\{1,2,\ldots,n\}$, the meaning of $\sigma K=K$ is that for every $i\in K$, there exists $j\in K$ such that
$\sigma e_i=e_j$. For a positive integer $r$, define
\begin{equation*}
C_{\sigma,r}=\{K\subseteq I(\sigma) \mid |K|=r,\, \sigma K=K \}, 
\end{equation*}
where $|K|$ denotes the cardinality of $K$. For $K\in C_{\sigma,r}$, note that $\theta_{K}\sigma\theta_{K}^{tr}\in S_r$. For the character $\chi$ of a representation of $\CC S_r$ and $\sigma\in R_{n}$, define
\begin{equation}\label{eq:char_rook}
\chi^{*}(\sigma):=\sum_{K\in C_{{\sigma},r}}\chi(\theta_{K}\sigma\theta_{K}^{tr}).
\end{equation}
Then from \cite[Theorem 2.30]{Solomon} $\chi^*$ is the character of a representation of $\mathbb{C}R_{n}$. Let $|\lambda|$ denote the total number of boxes in $\lambda\in\Lambda_{\leq n}$.  For the character $\chi_\lambda$ of the irreducible representation of the symmetric group $S_{|\lambda|}$ corresponding to $\lambda\in\Lambda_{\leq n}$, the character of the irreducible representation of $\CC R_n$ corresponding to $\lambda$ is $\chi_{\lambda}^*$, obtained using~\eqref{eq:char_rook}. By combining ~\cite[Theorem 2.24]{Solomon} and \cite[Theorem 2.30]{Solomon}, we note that the characters of the irreducible representations of $\mathbb{C}R_{n}$ arise as defined in~\eqref{eq:char_rook}.

Given $\lambda\in\Lambda_{\leq n}$, we define $\lmp$ to be the set of all Young diagrams $\mu$ obtained from $\lambda$ in the following way:\label{page:lambda}
		 first remove a box from an inner corner of $\lambda$ to obtain a Young diagram $\omega$; and then add a box to an outer corner of $\omega$ to obtain $\mu$. Also, we define $\lpn$ to be the set of all Young diagrams in $\Lambda_{\leq n}$ obtained from $\lambda$ by adding a box to an outer corner.

\begin{theorem}\label{prop:charc}
Let $\lambda\in\Lambda_{\leq n}$ be a partition of $r$. The Kronecker product  of $\chi_{(1)}^*$ and $\chi_{\lambda}^*$ is given by 
\begin{displaymath}
 \chi_{(1)}^*\chi_{\lambda}^*=\sum _{\mu\in \lmp\cup \lpn}\chi_{\mu}^*.
\end{displaymath}
\end{theorem}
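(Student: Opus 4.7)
The plan is a direct character-theoretic calculation starting from Solomon's formula \eqref{eq:char_rook}. The initial observation is that $\chi_{(1)}^*(\sigma)$ equals the trace of $\sigma$, which is $|F(\sigma)|$ for $F(\sigma) := \{j \in \{1,\ldots,n\} : \sigma e_j = e_j\}$. Combining this with \eqref{eq:char_rook} yields
\[
\chi_{(1)}^*(\sigma)\,\chi_\lambda^*(\sigma) \;=\; \sum_{j \in F(\sigma)}\ \sum_{K \in C_{\sigma,r}} \chi_\lambda\bigl(\theta_K\sigma\theta_K^{tr}\bigr),
\]
and my plan is to split this double sum according to whether $j \in K$ or $j \notin K$, then match the two pieces with the contributions $\sum_{\mu \in \lmp}\chi_\mu^*(\sigma)$ and $\sum_{\mu \in \lpn}\chi_\mu^*(\sigma)$ on the right-hand side.

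For the piece with $j \in K$, I would rewrite it as $\sum_{K \in C_{\sigma,r}} |K \cap F(\sigma)|\,\chi_\lambda(\tau)$ with $\tau := \theta_K\sigma\theta_K^{tr} \in S_r$. Unwinding $\theta_K$ shows that $|K \cap F(\sigma)|$ is the number of fixed points of $\tau$, which is the value at $\tau$ of the character of the defining $S_r$-module $\CC^r$. The classical symmetric-group tensor-induction identity gives
\[
\CC^r \otimes V^\lambda \;\cong\; \Ind^{S_r}_{S_{r-1}} \Res^{S_r}_{S_{r-1}} V^\lambda \;=\; \bigoplus_{\nu \in \lm}\ \bigoplus_{\mu \in \nu^+} V^\mu,
\]
and the multiset $\{\mu : \nu \in \lm,\ \mu \in \nu^+\}$ is exactly $\lmp$, each diagram occurring with its natural multiplicity as the number of (inner-corner, outer-corner) pairs yielding it. Hence $(\chi_\lambda \cdot \chi_{\CC^r})(\tau) = \sum_{\mu \in \lmp}\chi_\mu(\tau)$, and summing over $K$ with \eqref{eq:char_rook} gives $\sum_{\mu \in \lmp}\chi_\mu^*(\sigma)$.

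For the piece with $j \notin K$, set $K'' := K \sqcup \{j\} \in C_{\sigma,r+1}$; as $(j,K)$ ranges over such pairs, $(K'',j)$ ranges over pairs with $K'' \in C_{\sigma,r+1}$ and $j \in K'' \cap F(\sigma)$. The piece becomes
\[
\sum_{K''\in C_{\sigma,r+1}}\ \sum_{j \in K'' \cap F(\sigma)} \chi_\lambda\bigl(\theta_{K''\setminus\{j\}}\,\sigma\,\theta_{K''\setminus\{j\}}^{tr}\bigr).
\]
With $\tau' := \theta_{K''}\sigma\theta_{K''}^{tr} \in S_{r+1}$, the inner sum is the standard expression for $\Ind^{S_{r+1}}_{S_r}(\chi_\lambda)(\tau')$ coming from the general induced-character formula: it collapses to a sum over the fixed points of $\tau'$, which correspond to $j \in K'' \cap F(\sigma)$. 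By the symmetric-group branching rule $\Ind^{S_{r+1}}_{S_r} V^\lambda = \bigoplus_{\mu \in \lp} V^\mu$, this induced character equals $\sum_{\mu \in \lp}\chi_\mu(\tau')$. Summing over $K''$ and invoking \eqref{eq:char_rook} gives $\sum_{\mu \in \lp}\chi_\mu^*(\sigma)$; the terms with $|\mu| > n$ vanish since $C_{\sigma,|\mu|} = \emptyset$ then, so the piece simplifies to $\sum_{\mu \in \lpn}\chi_\mu^*(\sigma)$.

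The hardest part is the bookkeeping when passing through conjugation by $\theta_K$ or $\theta_{K''}$: one must verify that $K \cap F(\sigma)$ corresponds exactly to the fixed-point set of $\tau$ inside $S_r$, and that the induced-character formula for $\Ind^{S_{r+1}}_{S_r}$ yields precisely the inner sum over $j \in K''\cap F(\sigma)$. Both are routine once the conjugations are unwound, but the multiset interpretation of $\lmp$ coming from the symmetric-group tensor identity in the first piece must be kept explicit throughout.
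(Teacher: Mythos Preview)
Your proposal is correct and follows essentially the same route as the paper's proof: both expand $\chi_{(1)}^*(\sigma)\chi_\lambda^*(\sigma)$ via \eqref{eq:char_rook}, split according to whether the fixed point $j$ (equivalently, the singleton $L\in C_{\sigma,1}$) lies in $K$ or not, identify the first piece with the defining character of $S_r$ times $\chi_\lambda$ and hence with $\sum_{\mu\in\lmp}\chi_\mu$ via the tensor identity, and identify the second piece with the induced character $\Ind_{S_r}^{S_{r+1}}\chi_\lambda$ via the bijection $(K,j)\leftrightarrow(K''=K\cup\{j\},j)$. The only cosmetic difference is that the paper works through an auxiliary copy $\tilde S_r\subset S_{r+1}$ inside $R_n$ (Remark~\ref{rm:sym}) when unwinding the induced-character formula, whereas you phrase it directly in terms of the abstract inclusion $S_r\subset S_{r+1}$; the bookkeeping you flag as ``routine'' is exactly what the paper spells out in that passage.
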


\begin{proof}
 For $r=0$, since $\chi_{\lambda}^*$ is the character of the trivial representation of $\CC R_n$, therefore Theorem~\ref{prop:charc} holds. Assume $1\leq r\leq n$. Let $\sigma\in R_{n}$ and $\nu=(1)$. Then using~\eqref{eq:char_rook} we have
 \begin{align}
  (\chi_{\nu}^*\chi_{\lambda}^*)(\sigma)&= \chi_{\nu}^*(\sigma)\chi_{\lambda}^*(\sigma)
  =\bigg(\underset{L\in C_{\sigma,1}}{\sum}\chi_{\nu}(\theta_{L}\sigma\theta_{L}^{tr})\bigg)\bigg(\underset{K\in C_{\sigma,r}}{\sum}\chi_{\lambda}(\theta_{K}\sigma\theta_{K}^{tr})\bigg)\nonumber\\
  &= \underset{K\in C_{\sigma,r}}{\sum}\bigg(\underset{L\in C_{\sigma,1}}{\sum}\chi_{\nu}(\theta_{L}\sigma\theta_{L}^{tr})\bigg)\chi_{\lambda}(\theta_{K}\sigma\theta_{K}^{tr})\nonumber\\
  &=\underset{K\in C_{\sigma,r}}{\sum}\bigg(\underset{\underset{L\subseteq K}{L\in C_{\sigma,1}}}{\sum}\chi_{\nu}(\theta_{L}\sigma\theta_{L}^{tr})\bigg)\chi_{\lambda}(\theta_{K}\sigma\theta_{K}^{tr})+
   \underset{K\in C_{\sigma,r}}{\sum}\bigg(\underset{\underset{L\nsubseteq K}{L\in C_{\sigma,1}}}{\sum}\chi_{\nu}(\theta_{L}\sigma\theta_{L}^{tr})\bigg)\chi_{\lambda}(\theta_{K}\sigma\theta_{K}^{tr})\label{al:exp}.
 \end{align}
 Since $\nu=(1)$, therefore, for $L\in C_{\sigma,1}$,  $\chi_{\nu}(\theta_{L}\sigma\theta_{L}^{tr})=1$. This implies that $\underset{\underset{L\subseteq K}{L\in C_{\sigma,1}}}{\sum}\chi_{\nu}(\theta_{L}\sigma\theta_{L}^{tr})$ is
 equal to the number of fixed points of $\theta_{K}\sigma\theta_{K}^{tr}$, which is also the character value at $\theta_{K}\sigma\theta_{K}^{tr}$ of the character of the defining representation $\CC^r$ of $S_r$. By the tensor identity~\cite[Equation(3.18)]{HR05} followed by the induction and restriction rules for the symmetric groups, we have
 \begin{displaymath}
  (\text{the number of fixed points of } \theta_{K}\sigma\theta_{K}^{tr}) \chi_{\lambda}(\theta_{K}\sigma\theta_{K}^{tr})= \sum_{\mu\in \lambda^{-+}}\chi_{\mu}(\theta_{K}\sigma\theta_{K}^{tr}).
 \end{displaymath}
 Also, the second term in \eqref{al:exp}, for $r=n$, is zero and otherwise it becomes
  $\underset{K\in C_{\sigma,r}}{\sum}\bigg(\underset{\underset{L\nsubseteq K}{L\in C_{\sigma,1}}}{\sum}\chi_{\lambda}(\theta_{K}\sigma\theta_{K}^{tr})\bigg)$. In the following we prove that $(\text{Ind}_{\tilde{S}_{r}}^{S_{r+1}}(\tilde{\chi}_{\lambda}))^{*}(\sigma)=\underset{K\in C_{\sigma,r}}{\sum}\bigg(\underset{\underset{L\nsubseteq K}{L\in C_{\sigma,1}}}{\sum}\chi_{\lambda}(\theta_{K}\sigma\theta_{K}^{tr})\bigg)$.
 Using \eqref{eq:char_rook} and
 the formula for an induced character (see~\cite[Equation(3.18)]{FH}), we have
  \begin{align}\label{al:sum}
  (\text{Ind}_{\tilde{S}_{r}}^{S_{r+1}}(\tilde{\chi}_{\lambda}))^{*}(\sigma)&= \underset{M\in C_{\sigma,r+1}}{\sum}\text{Ind}_{\tilde{S}_{r}}^{S_{r+1}}(\tilde{\chi}_{\lambda})(\theta_{M}\sigma\theta_{M}^{tr})\nonumber\\
 &= \sum_{M\in C_{\sigma,r+1}} \bigg(\underset{\underset{\theta_{M}\sigma \theta_{M}^{tr}(e_i)=e_i}{i\in \{1,2,\ldots,r+1\}}}{\sum} \chi_{\lambda}
 ((i,r+1)\theta_{M}\sigma \theta_{M}^{tr}(i,r+1))\bigg).
  \end{align}
  For $i\in M$, $\theta_{M}^{tr}\theta_{M}(e_i)=e_i$ and zero otherwise; similarly, $\theta_{M}\theta_{M}^{tr}$ is the identity element of $S_{r+1}$. Thus, for $i\in \{1,2,\ldots,r+1\}$, $\theta_{M}\sigma \theta_{M}^{tr}(e_i)=e_i$ if only if 
$\sigma(\theta_{M}^{tr}(e_i))=\theta_{M}^{tr}(e_i)$. So the inner sum of \eqref{al:sum} contributes zero for $M\in C_{\sigma, r+1}$ which does not contain a fixed point of $\sigma$. Also there is one-to-one correspondence between the sets
\begin{displaymath} 
 \{(M,j)| M\in C_{\sigma, r+1} \text{ and } j\in M \text{ such that }
 \sigma(e_j)=e_j\}
 \end{displaymath}
 and $\{(K,L)\in C_{\sigma,r}\times C_{\sigma,1}|
 L\nsubseteq K\}$. Therefore, for $\sigma'=(\theta_{K\cup L}(L), r+1)$, \eqref{al:sum} simplifies to 
 $$\underset{K\in C_{\sigma,r}}{\sum}\bigg(
 \underset{\underset{L\nsubseteq K}{L\in C_{\sigma, 1}}} {\sum} \tilde{\chi}_{\lambda}(
 \sigma' \theta_{K\cup L}\sigma \theta_{K\cup L}^{tr} 
\sigma'\bigg).$$
The rank of $\sigma' \theta_{K\cup L}\sigma \theta_{K\cup L}^{tr} 
\sigma'$ is $r+1$ with $(i,j)$-th same as $(i,j)$-th entry of $(\theta_{K}\sigma\theta_{K}^{tr})$, for $1\leq i,j \leq r$,
   and $(r+1,r+1)$-th entry being $1$. By Remark~\ref{rm:sym},  
 $ \tilde{\chi}_{\lambda}(\sigma' \theta_{K\cup L}\sigma \theta_{K\cup L}^{tr} 
 \sigma')=\chi_{\lambda}(\theta_{K}\sigma \theta_{K}^{tr})$. This completes the proof. 
\end{proof}

\begin{corollary}\label{prop:brat_rook}
For $\lambda\in\Lambda_{\leq n}$, the Kronecker product of $V^{\lambda}_n$ with $\CC^n$ deomposes as follows
\begin{displaymath}
 \mathbb{C}^{n}\otimes V^{\lambda}_n\cong \bigoplus_{\mu\in \lmp\cup\lpn} V^{\mu}_n.
\end{displaymath} 
\end{corollary}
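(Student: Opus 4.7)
The plan is to deduce this module-level decomposition directly from the character identity in Theorem~\ref{prop:charc}. The key observation is that $\CC R_n$ is semisimple, so two finite-dimensional representations are isomorphic if and only if their characters coincide; it therefore suffices to match characters on both sides.

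First, I would identify the characters of the representations appearing in the statement. By the example immediately following~\eqref{eq:ac_P_j}, the defining representation $\CC^n$ of $\CC R_n$ is isomorphic to $V^{(1)}_n$, and so its character equals $\chi^*_{(1)}$. Similarly, the character of $V^\lambda_n$ is $\chi^*_\lambda$. The character of the tensor (Kronecker) product $\CC^n \otimes V^\lambda_n$ is then the pointwise product $\chi^*_{(1)}\chi^*_\lambda$. On the right-hand side, the character of $\bigoplus_{\mu \in \lmp \cup \lpn} V^\mu_n$ is $\sum_{\mu \in \lmp \cup \lpn}\chi^*_\mu$.

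The next and essentially final step is to invoke Theorem~\ref{prop:charc}, which asserts exactly the identity
\begin{displaymath}
\chi^*_{(1)}\chi^*_\lambda \;=\; \sum_{\mu \in \lmp \cup \lpn}\chi^*_\mu.
\end{displaymath}
Hence the two representations have the same character, and semisimplicity of $\CC R_n$ (established via Theorem~\ref{thm:irrep}) promotes this character equality to an isomorphism of $\CC R_n$-modules.

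There is essentially no obstacle here, since all the work is absorbed into Theorem~\ref{prop:charc}; the only thing to double-check is that the tensor product $\CC^n \otimes V^\lambda_n$ carries the diagonal $\CC R_n$-action whose character is indeed the product $\chi^*_{(1)}\chi^*_\lambda$, which is standard for any algebra equipped with a coproduct-like diagonal action (and in particular for a monoid algebra acting diagonally on a tensor product). This completes the argument in a couple of lines.
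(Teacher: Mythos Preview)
Your proposal is correct and follows essentially the same approach as the paper's own proof: identify the characters of $\CC^n$ and $V^\lambda_n$ as $\chi^*_{(1)}$ and $\chi^*_\lambda$, note that the character of the Kronecker product is the product of characters, and then invoke Theorem~\ref{prop:charc}. Your write-up is slightly more detailed (making semisimplicity and the diagonal action explicit), but the argument is the same.
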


\begin{proof}
The characters of $\mathbb{C}^n$ and $V^{\lambda}_n$ are
$\chi_{(1)}^*$ and $\chi_{\lambda}^*$, respectively. Since the character of the Kronecker product of representations is the product of their characters, the result follows from Proposition~\ref{prop:charc}.
\end{proof}

 We now build an undirected graph $\widehat{R}(n)$. The set $\widehat{R}_k(n)$ of the vertices  at the level $k$ of $\widehat{R}(n)$ consists of Young diagrams $\lambda$  such that $ 1\leq |\lambda| \leq \min\{k,n\}$. For $\lambda\in \widehat{R}_k(n)$ and $\mu\in \widehat{R}_{k+1}(n)$, there is an edge between $\lambda$ and $\mu$ if and only if $\mu\in \lmp\cup \lpn$. For $n\geq 3$, Figure \ref{fig:Rn3} depicts the graph $\widehat{R}(n)$ up to level $3$.

\begin{remark}\label{rem:paths}
	Let $\mathcal{P}_{k,\lambda}$ denote the set of paths (see Convention~\ref{con} for the definition of a path), in the graph $\widehat{R}(n)$, starting from 
	the Young diagram corresponding to $(1)$ at level $1$ and ending at the Young diagram $\lambda$ at level $k$. Then from Corollary~\ref{prop:brat_rook}, the multiplicity of $V^{\lambda}_n$ in $(\CC^n)^{\otimes k}$ is the cardinality of $\mathcal{P}_{k,\lambda}$.
\end{remark}

\begin{figure}[ht]\centering
 \begin{tikzpicture}[scale=0.8]

 \node (b) at (-2,-2) {$k=1$};
 \node (c) at (-2,-4) {$k=2$};
 \node (d) at (-2,-6) {$k=3$};
 
  \node (2) at  (0,-2) {$\ytableausetup{boxsize=1em} \begin{ytableau}
                                                        \empty \\
                                                       \end{ytableau}$};
  \node (3) at  (0,-4) {$\ytableausetup{boxsize=1em} \begin{ytableau}
                                                        \empty \\
                                                       \end{ytableau}$};
  \node (4) at (2,-4) {$\ytableausetup{boxsize=1em} \begin{ytableau}
                                                        \empty & \empty \\
                                                       \end{ytableau}$};
  \node (5) at (4,-4) {$\ytableausetup{boxsize=1em} \begin{ytableau}
                                                        \empty \\
                                                        \empty \\
                                                       \end{ytableau}$};
  \node (6) at (0,-6) {$\ytableausetup{boxsize=1em} \begin{ytableau}
                                                        \empty \\
                                                       \end{ytableau}$};
  \node (7) at (2,-6) {$\ytableausetup{boxsize=1em} \begin{ytableau}
                                                        \empty & \empty \\
                                                       \end{ytableau}$};
  \node (8) at (4,-6) {$\ytableausetup{boxsize=1em} \begin{ytableau}
                                                        \empty \\
                                                        \empty \\
                                                       \end{ytableau}$};
  \node (9) at (6,-6) {$\ytableausetup{boxsize=1em} \begin{ytableau}
                                                        \empty & \empty & \empty \\
                                                       \end{ytableau}$};
  \node (10) at (8,-6) {$\ytableausetup{boxsize=1em} \begin{ytableau}
                                                        \empty & \empty \\
                                                        \empty \\
                                                       \end{ytableau}$};
  \node (11) at (11,-6) {$\ytableausetup{boxsize=1em} \begin{ytableau}
                                                        \empty \\
                                                        \empty \\
                                                        \empty \\
                                                       \end{ytableau}$};
  \draw (2) -- (3) -- (6);
  \draw (2) -- (4) -- (7);
  \draw (2) -- (5) -- (11);
  \draw (3) -- (7);
  \draw (3) -- (8);
  \draw (4) -- (8);
  \draw (4) -- (9);
  \draw (4) -- (10);
  \draw (5) -- (7);
  \draw (5) -- (8);
  \draw (5) -- (10);
  \draw (5) -- (11);
 \end{tikzpicture}
 \caption{$\widehat{R}(n)$ up to level $3$}
 \label{fig:Rn3}
 \end{figure}
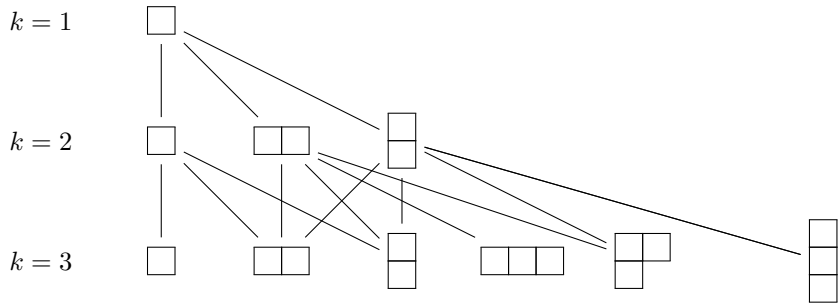
 
  Solomon~\cite[Example 3.18]{Solomon} gave a formula for the multiplicity of $V^{\lambda}_n$ in $(\CC^n)^{\otimes k}$. In Theorem~\ref{coro:bij}, we give a bijective proof of his formula, and for this we need the following definitions and notation. 

\begin{enumerate}
	\item For Young diagrams $\lambda$ and $\mu$, if $\mu$ is contained in $\lambda$, then
	we write $\mu\subset \lambda$  and in addition, if $\lambda$ and $\mu$ differ by only one box then we denote this box as $\lambda/\mu=\ytableausetup{boxsize=0.7em}
	\begin{ytableau}
	\empty \\
	\end{ytableau}$.
	\item (Maximum-entry order) For a set partition $\pi$ of $\{1,2,\ldots,k\}$, we underline the maximum entry of each block of $\pi$. Given blocks $B_1$ and $B_2$ of $\pi$, if the maximum entry of $B_1$ is strictly less than the maximum entry of $B_2$, then we write $B_1<B_2$.
	 This defines a total order on $\pi$, which is called the maximum-entry order in~\cite[Definition 3.14]{BH}. 
 For the definition of a set partition and its blocks, see Section~\ref{sec:partition}. 
	
	\item (Set-partition tableau)
	Let $\lambda$ be a Young diagram. A set tableau $T$ of shape $\lambda$ is a filling of boxes of $\lambda$ with mutually disjoint nonempty finite subsets of $\ZZ_{> 0}$.  From~\cite[Definition 3.14]{BH}, if the set of the entries of a set tableau $T$ is a set-partition of $\{1,2,\ldots,k\}$ for some $k\in\ZZ_{> 0}$, then $T$ is called a set-partition tableau. 
	  If the entries of a set tableau (respectively, set-partition tableau) strictly increase, with respect to the maximum-entry order, along the rows from left to right and along the coulmns from top to bottom, then it is called a standard set tableau (respectively, standard set-partition tableau).
	  
		\item (Robinson--Schensted--Knuth row-insertion) Given a standard set tableau $T$ and a finite nonempty subset $b$ of $\ZZ_{>0}$ which is disjoint from the entries of $T$, let 
	$(T\leftarrow b)$ denote the Robinson--Schensted--Knuth (RSK) row-insertion of $b$ into $T$. For an explicit algorithm of RSK row-insertion, we refer to~\cite[Section 7.11]{Stanley}.
\end{enumerate}

\begin{example}
	For  the set partition $\pi=\{\{1\},\{2,3\}\}$ of $\{1,2,3\}$, the maximum-entry order on the blocks of $\pi$ is depicted as $\{\underline{1}\}$, $\{2,\underline{3}\}$. For $\lambda=(2)$,
	\ytableausetup{boxsize=1.8em}
	\begin{ytableau}
		\scriptstyle\{\underline 1\} &
		\scriptstyle\{2,\underline 3\}
	\end{ytableau} 
	is the standard set-partition tableau filled with $\pi$. 
\end{example}

\begin{theorem}\label{coro:bij}
 Let $\lambda\in \Lambda_{\leq n}$ be a partition of $r$. Then the multiplicity of $V^{\lambda}_n$ in $(\CC^n)^{\otimes k}$ is $S(k,r)f^{\lambda}$, where 
 $S(k,r)$ is a Stirling number of the second kind and $f^\lambda$ is the number of standard Young tableaux of shape $\lambda$. 
\end{theorem}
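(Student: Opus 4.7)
The plan is to establish the identity through an explicit bijection built on Robinson--Schensted--Knuth (RSK) row-insertion. By Remark~\ref{rem:paths}, the multiplicity of $V^{\lambda}_n$ in $(\CC^n)^{\otimes k}$ equals $|\mathcal{P}_{k,\lambda}|$, so it suffices to prove $|\mathcal{P}_{k,\lambda}| = S(k,r) f^{\lambda}$. I first interpret the right-hand side combinatorially: it equals the number of standard set-partition tableaux of shape $\lambda$ whose entries form a set partition of $\{1, \ldots, k\}$ into $r$ blocks, since choosing the set partition contributes the factor $S(k,r)$ and then distributing its blocks among the boxes of $\lambda$ in a manner compatible with the max-entry order is the same data as a standard Young tableau of shape $\lambda$, contributing the factor $f^{\lambda}$. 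Let $\mathrm{SSPT}_{\lambda,k}$ denote this set of tableaux.

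Next, I would construct a bijection $\Phi \colon \mathcal{P}_{k,\lambda} \to \mathrm{SSPT}_{\lambda,k}$ inductively along the path. For $\mathbf{p} = (\lambda_1, \ldots, \lambda_k)$, build a sequence of standard set-partition tableaux $T_1, \ldots, T_k$ of shapes $\lambda_1, \ldots, \lambda_k$, setting $T_1$ to be the one-box tableau containing the block $\{1\}$. At step $i \geq 2$, there are two cases. If $\lambda_i$ is obtained from $\lambda_{i-1}$ by adding a box $c$, place the singleton $\{i\}$ at $c$ in $T_{i-1}$ to obtain $T_i$. Otherwise $\lambda_i$ arises from $\lambda_{i-1}$ via a remove-and-add, with removed inner corner $c'$ of $\lambda_{i-1}$ and added outer corner $c''$ of $\omega := \lambda_{i-1} \setminus \{c'\}$: apply reverse RSK row-insertion at the corner $c'$ of $T_{i-1}$ to extract a block $X$ and produce a tableau $T''$ of shape $\omega$, then place $X \cup \{i\}$ at $c''$ in $T''$ to form $T_i$. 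In both cases, the newly placed block has max-entry $i$, the largest in $T_i$, so standardness is automatically preserved. Define $\Phi(\mathbf{p}) = T_k$.

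The inverse $\Phi^{-1}$ processes a tableau $T \in \mathrm{SSPT}_{\lambda,k}$ from $i = k$ down to $i = 2$. At step $i$, the block $B'$ containing $i$ occupies a corner $c''$ of the current shape, because $i$ is the largest entry present. If $B'$ is the singleton $\{i\}$, remove its box, recording a box-addition step. Otherwise, remove $B'$ from $c''$ and RSK row-insert $B' \setminus \{i\}$ into the resulting tableau; the corner at which the bumping chain terminates is the removed corner $c'$ of the corresponding remove-and-add step. The sequence of shapes traversed thereby recovers the path $\Phi^{-1}(T)$.

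The main obstacle is to verify that $\Phi$ and $\Phi^{-1}$ are mutually inverse. This reduces to the classical bijectivity of RSK row-insertion between pairs (tableau, block to insert) and (tableau with a distinguished new corner): reverse row-insertion from the corner created by a forward row-insertion recovers the inserted block, and vice versa. Ensuring that the shape transitions recorded by $\Phi^{-1}$ match the $\lambda^{-+}$ and $\lambda^{+,n}$ prescriptions at each step is a direct consequence of this bijectivity, together with the fact that inserting a block of maximal max-entry produces a corner determined uniquely by the bumping. Once these consistency checks are in place, one concludes $|\mathcal{P}_{k,\lambda}| = |\mathrm{SSPT}_{\lambda,k}| = S(k,r) f^{\lambda}$, as desired.
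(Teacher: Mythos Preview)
Your proposal is correct and follows essentially the same approach as the paper: both interpret $S(k,r)f^{\lambda}$ as the cardinality of the set of standard set-partition tableaux of shape $\lambda$ filled by set partitions of $\{1,\ldots,k\}$ into $r$ blocks, and both construct the bijection with $\mathcal{P}_{k,\lambda}$ via the same RSK-based recursion (add a singleton block in the add-a-box case; in the remove-and-add case, reverse row-insert at the removed corner and place the extracted block together with $i$ at the added corner), with the inverse obtained by stripping the block containing $i$ and forward row-inserting its residue. The paper's proof is phrased with the inverse map described first, but the content is identical to yours.
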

\begin{proof}
	 Let  $\widehat{S}_{k,\lambda}$ denote the set of all standard set-partition tableaux of shape $\lambda$ filled with any set partition of $\{1,2,\ldots,k\}$ into $r$ blocks. The cardinality of $\widehat{S}_{k,\lambda}$ is $S(k,r)f^{\lambda}$. To prove the theorem, following Remark~\ref{rem:paths}, we give a bijection between $\mathcal{P}_{k,\lambda}$ and $\widehat{S}_{k,\lambda}$ (the idea of such a bijection comes from \cite[Section 5.3]{BHH}).

		Given $T_{k}\in\widehat{S}_{k,\lambda}$, below we define a path $(\gamma^{(1)}=(1),\gamma^{(2)},\ldots,\gamma^{(k)}=\lambda)\in \mathcal{P}_{k,\lambda}$.
		\begin{enumerate}
			\item Remove the inner corner filled with the set $b$ containing $k$ from $T_{k}$, and let $T_{k-\frac{1}{2}}$ denote the resulting standard set tableau with total number of boxes $r-1$. Let $b'=b\setminus\{k\}$. If $b'$ is empty, then $T_{k-\frac{1}{2}}$ is a standard set-partition tableau filled with a set partition of 
			$\{1,2,\ldots,k-1\}$ into $r-1$ blocks and define $T_{k-1}:=T_{k-\frac{1}{2}}$. If $b'$ is nonempty, then perform RSK row-insertion 
			$(T_{k-\frac{1}{2}}\leftarrow b')$ to obtain a standard set-partition tableau $T_{k-1}$ filled with a set partition of 
			$\{1,2,\ldots,k-1\}$ into $r$ blocks. Let $\gamma^{(k-1)}$ be the shape of $T_{k-1}$.
			\item Repeat Step $1$  until we reach $\gamma^{(1)}$ at the level $1$.
		\end{enumerate} 
	
Conversely, given a path $(\gamma^{(1)}=(1),\gamma^{(2)},\ldots,\gamma^{(k)}=\lambda)\in\mathcal{P}_{k,\lambda}$, we recursively construct a sequence $T_{1},T_{2},\ldots,T_{k}$ such that $T_{i}\in\widehat{S}_{i, \gamma^{(i)}}$ for $1\leq i\leq k$. Note that $T_{1}$ is the set-partition tableau of shape $(1)$ filled with $\{\underline{1}\}$. Note that by the definition of a path in $\mathcal{P}_{k,\lambda}$, $\gamma^{(i)}\in(\gamma^{(i-1)})^{-+}\cup (\gamma^{(i-1)})^{+,n}$, therefore,
 for $i=2,\ldots,k$, we can construct $T_{i}$ from $T_{i-1}$ by performing the following cases:
\begin{enumerate} 
	\item If $\gamma^{(i)}/ \gamma^{(i-1)}=\ytableausetup{boxsize=0.7em}
\begin{ytableau}
\empty \\
\end{ytableau}$, then $T_i$ is same as $T_{i-1}$ with the additional box of $T_{i}$ filled with $\{\underline{i}\}$. 
	\item Suppose $\gamma^{(i)}$ is obtained from $\gamma^{(i-1)}$ by first removing an inner corner, resulting into a Young diagram $\gamma^{(i-\frac{1}{2})}$, and then adding an outer corner to $\gamma^{(i-\frac{1}{2})}$. Then using RSK row-insertion algorithm, there exists a unique standard set tabelau $T_{i-\frac{1}{2}}$ of shape $\gamma^{(i-\frac{1}{2})}$ and a unique set partition $b$ of $\{1,2,\ldots,i-1\}$ such that 
	$T_{i-1}=(T_{i-\frac{1}{2}}\leftarrow b)$. Note that $\gamma^{(i-\frac{1}{2})}\subset \gamma^{(i)}$ such that 
	$\gamma^{(i)}/\gamma^{(i-\frac{1}{2})}=\ytableausetup{boxsize=0.7em}
\begin{ytableau}
\empty \\
\end{ytableau}$. The standard set-partition tableau $T_{i}$ is same as $T_{(i-\frac{1}{2})}$ with the additional box  $\gamma^{(i)}/\gamma^{(i-\frac{1}{2})}=\ytableausetup{boxsize=0.7em}
\begin{ytableau}
\empty \\
\end{ytableau}$ filled with $b\cup\{\underline{i}\}$.  \qedhere
\end{enumerate} 
\end{proof}

\begin{example}
We illustrate that the path $\bigg(\,\ytableausetup{boxsize=1.5em}
\begin{ytableau}
\empty \\
\end{ytableau},
\begin{ytableau}
\empty & \empty \\
\end{ytableau},
\begin{ytableau}
\empty \\
\empty \\
\end{ytableau}\,\bigg)$	
in $\widehat{R}(3)$ corresponds to the standard set-partition tableau
$\ytableausetup{boxsize=1.8em}
\begin{ytableau}
 \scriptstyle\{\underline 1\}\\
 \scriptstyle\{2,\underline 3\}
\end{ytableau}.$
For the given path, we have $\gamma^{(1)}=\ytableausetup{boxsize=1.5em} 
\begin{ytableau}
\empty \\
\end{ytableau},
\gamma^{(2)}=\begin{ytableau}
\empty & \empty \\
\end{ytableau},
\gamma^{(3)}=\begin{ytableau}
\empty \\
\empty \\
\end{ytableau}$. So
$T_{1}=\ytableausetup {boxsize=1.5em} 
\begin{ytableau}
 \scriptstyle\{\underline 1\}
\end{ytableau}$, and since $\gamma^{(2)}/\gamma^{(1)}= \ytableausetup {boxsize=1.5em} \begin{ytableau} 
\empty \\
\end{ytableau} $, we have 
$T_{2}= \ytableausetup {boxsize=1.5em}
\begin{ytableau}
 \scriptstyle\{\underline 1\} &
 \scriptstyle\{\underline 2\}
\end{ytableau}$. Also, $\gamma^{(3)}$ is obtained from $\gamma^{(2)}$ by first removing a box which results in $\gamma^{(\frac{5}{2})}=\ytableausetup {boxsize=1.5em} \begin{ytableau}
\empty \\
\end{ytableau}$ and then adding a box to $\gamma^{(\frac{5}{2})}$. By RSK row-insertion,  we have 
$T_{\frac{5}{2}}=\ytableausetup {boxsize=1.5em}
\begin{ytableau}
 \scriptstyle\{\underline 1\}
\end{ytableau}$ and 
$b=\{\underline{2}\}$ such that $
T_2=T_{\frac{5}{2}}\leftarrow b.$
By the algorithm discussed in the proof of Theorem~\ref{coro:bij}, $T_{3}$ is same as $T_{\frac{5}{2}}$ with the additional box $\gamma^{(3)}/\gamma^{(\frac{5}{2})}=\ytableausetup{boxsize=1.5em}
\begin{ytableau}
\empty \\
\end{ytableau}$ 
filled with $\{2,\underline{3}\}$, i.e, $T_{3}=$
$ \ytableausetup{boxsize=1.8em}
\begin{ytableau}
 \scriptstyle\{\underline 1\}\\
 \scriptstyle\{2,\underline 3\}
\end{ytableau}$.
Similarly, the paths $\bigg(\gamma^{(1)}=\ytableausetup{boxsize=1.5em} 
\begin{ytableau}
\empty \\
\end{ytableau},
\gamma^{(2)}=\begin{ytableau}
\empty \\
\empty \\
\end{ytableau},
\gamma^{(3)}=\begin{ytableau}
\empty \\
\empty \\
\end{ytableau}\,\bigg)$ and 
$\bigg(\gamma^{(1)}=\begin{ytableau}
\empty \\
\end{ytableau},
\gamma^{(2)}=\begin{ytableau}
\empty \\
\end{ytableau},
\gamma^{(3)}=\begin{ytableau}
\empty \\
\empty \\
\end{ytableau}\,\bigg)$ correspond to the standard
set-partition tableaux 
$\ytableausetup {boxsize=1.8em}
\begin{ytableau}
 \scriptstyle\{\underline 2\}\\
 \scriptstyle\{1,\underline 3\}
\end{ytableau}$ and 
$\begin{ytableau}
 \scriptstyle\{1,\underline 2\}\\
 \scriptstyle\{\underline 3\}
\end{ytableau}$, respectively.
\end{example}

\subsection{Modified induction and restriction rules}\label{sec:ind_res} 
  This section defines modified induction and modified restriction rules between $\CC R_{n-1}$ and  $\CC R_n$. We prove the corresponding Frobenius reciprocity. An important application of this is to prove, in Theorem~\ref{thm:item2}(ii), that the branching rule for inclusion $\CC I_{k+\frac{1}{2}}\subset\CC I_{k+1}$ of totally propagating partition algebras is multiplicity free.

For $\lambda\in\Lambda_{\leq n-1}$,  define $\lp$ to be the set of all Young diagrams in $\Lambda_{\leq n}$ obtained from $\lambda$ by adding a box to an outer corner. Define the modified induction rule as follows
\begin{equation*}
\widehat{\Ind}^{\CC R_n}_{\CC R_{n-1}}\big( \bigoplus_{\lambda \in \Lambda_{\leq n-1}} (V^{\lambda}_{n-1})^{\oplus m_{\lambda}}\big)= \bigoplus_{\lambda \in \Lambda_{\leq n-1}} \big(\bigoplus_{\nu\in\lambda^+} V^{\nu}_{n}\big)^{\oplus m_{\lambda}},\quad \text{ where } m_{\lambda}\in\ZZ_{> 0}.
\end{equation*}

\begin{proposition}[Tensor identity]\label{prop:tensor_identity} For a representation $M$ of $\CC R_n$, we have 
	\begin{equation}\label{eq:tensor_identity}
	\widehat{\Ind}^{\CC R_n}_{\CC R_{n-1}}(\Res^{\CC R_n}_{\CC R_{n-1}}M)\cong (M\otimes \CC^n).
	\end{equation}
\end{proposition}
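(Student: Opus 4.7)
The plan is to reduce the statement to an assertion about irreducibles by additivity, and then match multiplicities on both sides using the branching rule and the Kronecker product formula already established.

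First I would observe that both sides of~\eqref{eq:tensor_identity} are additive functors in $M$. The functor $M\mapsto \widehat{\Ind}^{\CC R_n}_{\CC R_{n-1}}(\Res^{\CC R_n}_{\CC R_{n-1}}M)$ is additive by the very definition of $\widehat{\Ind}$ (which is specified on a direct sum of irreducibles), and $M\mapsto M\otimes \CC^n$ is additive since tensoring with a fixed vector space preserves direct sums. Since $\CC R_n$ is semisimple, every finite-dimensional module is a direct sum of the $V^{\lambda}_n$ with $\lambda\in\Lambda_{\leq n}$, so it suffices to verify the isomorphism for $M=V^{\lambda}_n$.

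Next I would compute both sides in this case. The left-hand side is evaluated by first applying Proposition~\ref{prop:res} to get
\[
\Res^{\CC R_n}_{\CC R_{n-1}}V^{\lambda}_n \;\cong\; \bigoplus_{\nu\in\lambda^{-,=}} V^{\nu}_{n-1},
\]
and then applying the definition of modified induction, which yields
\[
\widehat{\Ind}^{\CC R_n}_{\CC R_{n-1}}(\Res^{\CC R_n}_{\CC R_{n-1}}V^{\lambda}_n) \;\cong\; \bigoplus_{\nu\in\lambda^{-,=}}\;\bigoplus_{\mu\in\nu^{+}} V^{\mu}_n.
\]
The right-hand side is evaluated by Corollary~\ref{prop:brat_rook}, which gives
\[
\CC^n \otimes V^{\lambda}_n \;\cong\; \bigoplus_{\mu\in \lmp\cup\lpn} V^{\mu}_n.
\]

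The final step is to match the two decompositions as multisets of indices. I would split the outer sum on the left according to whether $\nu=\lambda$ (possible only when $|\lambda|\leq n-1$) or $\nu$ is obtained from $\lambda$ by removing an inner corner. In the first case the inner sum contributes the set $\lambda^{+}$ restricted to $\Lambda_{\leq n}$, which is exactly $\lpn$. In the second case, letting $\omega$ denote the diagram obtained from $\lambda$ by removing an inner corner, the inner sum ranges over $\mu\in\omega^{+}$, and by the definition on page~\pageref{page:lambda} these are precisely the diagrams comprising $\lmp$. Adding the two contributions reproduces $\lmp\cup\lpn$ and matches the right-hand side term by term.

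The main point requiring care, rather than a genuine obstacle, is the treatment of the boundary case $|\lambda|=n$: there the term $\nu=\lambda$ does not occur in $\lambda^{-,=}$ and $\lpn$ is empty, so both decompositions reduce to the $\lmp$ part and the identity still holds. With these bookkeeping checks the proof is complete.
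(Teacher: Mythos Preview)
Your proof is correct and follows essentially the same approach as the paper: reduce to irreducibles by semisimplicity and additivity, then compare the decompositions obtained from Proposition~\ref{prop:res}, the definition of $\widehat{\Ind}$, and Corollary~\ref{prop:brat_rook}. You in fact spell out the multiset matching and the boundary case $|\lambda|=n$ in more detail than the paper, which simply cites those three ingredients and leaves the bookkeeping implicit.
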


\begin{proof}
	When $M=V^{\lambda}_n$ for $\lambda\in\Lambda_{\leq n}$, \eqref{eq:tensor_identity} holds using Proposition~\ref{prop:res}, the definition of 
	$\widehat{\Ind}^{\CC R_n}_{\CC R_{n-1}}(-)$ and Corollary~\ref{prop:brat_rook}. For an arbitrary representation of $\CC R_n$, the result follows by using the complete reducibility and by observing that $\widehat{\Ind}^{\CC R_n}_{\CC R_{n-1}}(-)$ preserves the direct sums.
\end{proof}

\begin{proposition}\label{prop:ind_res_tensor} For $k\in\ZZ_{\geq 0}$, we have
	\begin{equation}\label{eq:tensor_ind_res}
	(\widehat{\Ind}^{\CC R_n}_{\CC R_{n-1}}({\Res}_{\mathbb{C}R_{n-1}}^{\mathbb{C}R_{n}}V^{\emptyset}_n))^{k}\cong(\mathbb{C}^{n})^{\otimes k}.
	\end{equation}
\end{proposition}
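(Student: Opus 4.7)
The plan is to read $(\cdot)^k$ as $k$-fold iteration of the endofunctor $T:=\widehat{\Ind}^{\CC R_n}_{\CC R_{n-1}}\circ\Res^{\CC R_n}_{\CC R_{n-1}}$ applied to $V^{\emptyset}_n$, and then proceed by induction on $k$ using Proposition~\ref{prop:tensor_identity} as the engine. The tensor identity rewrites $T(M)$ as $M\otimes\CC^n$, so each application of $T$ introduces one additional copy of $\CC^n$. Starting from the trivial representation $V^{\emptyset}_n\cong\CC$ (from the example immediately following Theorem~\ref{thm:irrep}), iterating $T$ exactly $k$ times yields $(\CC^n)^{\otimes k}$.

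The base case $k=0$ is immediate: $T^0(V^{\emptyset}_n)=V^{\emptyset}_n\cong\CC=(\CC^n)^{\otimes 0}$. For the inductive step, I would assume $T^{k-1}(V^{\emptyset}_n)\cong(\CC^n)^{\otimes(k-1)}$, apply $T$, and invoke Proposition~\ref{prop:tensor_identity} with $M=T^{k-1}(V^{\emptyset}_n)$ to obtain
\begin{equation*}
T^{k}(V^{\emptyset}_n)=T\bigl(T^{k-1}(V^{\emptyset}_n)\bigr)\cong T^{k-1}(V^{\emptyset}_n)\otimes\CC^n\cong (\CC^n)^{\otimes(k-1)}\otimes\CC^n=(\CC^n)^{\otimes k}.
\end{equation*}

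There is no substantive obstacle here: the tensor identity does all the work, and the triviality of $V^{\emptyset}_n$ provides the correct seed for the induction. The value of stating this as a separate proposition lies not in the proof but in its use downstream, where it lets one transfer decomposition problems for $(\CC^n)^{\otimes k}$ (and multiplicity counts of irreducibles appearing in it) into counts of paths built from the modified induction-restriction rules, which is the natural language for the Bratteli-diagram description later in the paper.
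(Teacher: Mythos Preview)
Your proof is correct and is essentially the same as the paper's: both proceed by induction on $k$ with the trivial base case $k=0$ and invoke the tensor identity (Proposition~\ref{prop:tensor_identity}) at the inductive step to peel off one factor of $\CC^n$. The only cosmetic difference is that the paper also records the $k=1$ case explicitly before the general inductive step, whereas you absorb it directly into the induction.
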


\begin{proof}
	For $k=0$, both sides of~\eqref{eq:tensor_ind_res} are isomorphic to the trivial representation $\CC$ of $\CC R_n$.
We prove the result using induction on $k$. For $k=1$,
 $\widehat{\Ind}^{\CC R_n}_{\CC R_{n-1}}({\Res}_{\mathbb{C}R_{n-1}}^{\mathbb{C}R_{n}}V^{\emptyset}_{n})\cong V^{(1)}_{n}\cong\CC^n$. For $k\geq 2$, suppose that~\eqref{eq:tensor_ind_res} holds for some $k-1$. Then, 
 \begin{displaymath}
 (\widehat{\Ind}^{\CC R_n}_{\CC R_{n-1}}({\Res}_{\mathbb{C}R_{n-1}}^{\mathbb{C}R_{n}}V^{\emptyset}_{n}))^{k}=
\widehat{\Ind}^{\CC R_n}_{\CC R_{n-1}}({\Res}_{\mathbb{C}R_{n-1}}^{\mathbb{C}R_{n}}((\widehat{\Ind}({\Res}_{\mathbb{C}R_{n-1}}^{\mathbb{C}R_{n}}V^{\emptyset}_{n}))^{k-1}).
\end{displaymath}
Now using the induction hypothesis for $k-1$ and Proposition~\ref{prop:tensor_identity}, we get the required result.
\end{proof}

 For $\lambda\in\Lambda_{\leq n}$,  define $\lm$ to be the set of all Young diagrams in $\Lambda_{\leq n-1}$ obtained from $\lambda$ by removing a box from an inner corner. Define the modified restriction rule as follows
 \begin{equation*}
\widehat{\Res}^{\CC R_n}_{\CC R_{n-1}}\big(\bigoplus_{\lambda\in\Lambda_{\leq n} } (V^{\lambda}_{n})^{\oplus n_{\lambda}}\big)=\bigoplus_{\lambda\in\Lambda_{\leq n}}\big(\bigoplus_{\mu\in\lambda^{-}}V^\mu_{n-1}\big)^{\oplus n_{\lambda}}, \quad \text{ where } n_{\lambda}\in\ZZ_{> 0}.
\end{equation*}

\begin{proposition}[Frobenius reciprocity]\label{prop:induct_restrict}
	Let $V$ and $W$ be representations of $\CC R_n$ and $\CC R_{n-1}$, respectively. Then
	\begin{equation}\label{eq:fr}
	\Hom_{\CC R_n}(\widehat{\Ind}^{\CC R_n}_{\CC R_{n-1}}(W),V)\cong \Hom_{\CC R_{n-1}}(W,\widehat{\Res}^{\CC R_n}_{\CC R_{n-1}}(V)).
	\end{equation}
\end{proposition}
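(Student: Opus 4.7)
The plan is to reduce the claim to the case where both $V$ and $W$ are irreducible, and then verify it by comparing dimensions via Schur's lemma.

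First, I would observe that both sides of \eqref{eq:fr} are additive in each of the variables $V$ and $W$. By the explicit formula in the definition of $\widehat{\Ind}^{\CC R_n}_{\CC R_{n-1}}$, this functor takes direct sums to direct sums, and $\Hom_{\CC R_n}(-,V)$ sends direct sums in the first slot to direct products (hence direct sums, since everything is finite-dimensional). The same applies to $\widehat{\Res}^{\CC R_n}_{\CC R_{n-1}}$ on the right-hand side. Because $\CC R_n$ and $\CC R_{n-1}$ are semisimple, every finite-dimensional representation decomposes as a direct sum of the $V^{\lambda}_n$ (resp.\ $V^{\mu}_{n-1}$) from Theorem~\ref{thm:irrep}. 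Hence it suffices to verify \eqref{eq:fr} when $V = V^{\lambda}_n$ and $W = V^{\mu}_{n-1}$ for $\lambda \in \Lambda_{\leq n}$ and $\mu \in \Lambda_{\leq n-1}$.

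Next, I would apply Schur's lemma. Using the defining decomposition
\[
\widehat{\Ind}^{\CC R_n}_{\CC R_{n-1}}(V^{\mu}_{n-1}) \;\cong\; \bigoplus_{\nu \in \mu^{+}} V^{\nu}_n,
\]
we obtain $\dim \Hom_{\CC R_n}(\widehat{\Ind}^{\CC R_n}_{\CC R_{n-1}}(V^{\mu}_{n-1}), V^{\lambda}_n) = 1$ if $\lambda \in \mu^{+}$, and $0$ otherwise. Dually, from
\[
\widehat{\Res}^{\CC R_n}_{\CC R_{n-1}}(V^{\lambda}_n) \;\cong\; \bigoplus_{\rho \in \lambda^{-}} V^{\rho}_{n-1},
\]
we obtain $\dim \Hom_{\CC R_{n-1}}(V^{\mu}_{n-1}, \widehat{\Res}^{\CC R_n}_{\CC R_{n-1}}(V^{\lambda}_n)) = 1$ if $\mu \in \lambda^{-}$, and $0$ otherwise.

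The final step is the combinatorial identification: $\lambda \in \mu^{+}$ means that $\lambda$ is obtained from $\mu$ by adding a box to an outer corner, which is literally equivalent to saying that $\mu$ is obtained from $\lambda$ by removing a box from an inner corner, i.e.\ $\mu \in \lambda^{-}$ (the size constraints $|\lambda| \leq n$ and $|\mu| \leq n-1$ are automatic since adding a box increases the size by one). Therefore the two Hom-spaces above have the same dimension (either both $0$ or both $1$), and so are isomorphic as $\CC$-vector spaces. There is no genuine obstacle in this proof; the only point requiring care is bookkeeping of the sets $\mu^{+}$ and $\lambda^{-}$, which, unlike $\mu^{+,=}$ and $\lambda^{-,=}$ appearing in Proposition~\ref{prop:res}, do not include the "equal" summand, and the two modified functors are adjoint precisely because these smaller sets match under the same box-adding/removing bijection.
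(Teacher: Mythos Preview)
Your proof is correct and follows essentially the same approach as the paper: reduce to irreducibles by semisimplicity, apply the definitions of $\widehat{\Ind}$ and $\widehat{\Res}$ together with Schur's lemma, and conclude via the equivalence $\lambda\in\mu^{+}\Longleftrightarrow\mu\in\lambda^{-}$. The paper's write-up is nearly identical, only phrasing the nonzero case as $\Hom_{\CC R_n}(V^{\lambda}_n,V^{\lambda}_n)\cong\CC\cong\Hom_{\CC R_{n-1}}(V^{\mu}_{n-1},V^{\mu}_{n-1})$ rather than speaking of dimensions.
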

\begin{proof}
Since the rook monoid algebra over $\mathbb{C}$ is semisimple, it is enough to prove \eqref{eq:fr} for $V=V^{\lambda}_{n}$ and $W=V^{\mu}_{n-1}$, where $\lambda\in \Lambda_{\leq n}$ and $\mu\in \Lambda_{\leq n-1}$, respectively. Then,
	\begin{align*}
		\Hom_{\CC R_n}(\widehat{\Ind}^{\CC R_n}_{\CC R_{n-1}}(V^\mu_{n-1}),V^\lambda_{n})&=\Hom_{\CC R_n}(\bigoplus_{\nu\in\mu^{+}}V^{\nu}_{n},V^{\lambda}_n)
		= \begin{cases}
		\Hom_{\CC R_n}(V^\lambda_n,V^\lambda_n) & \text{if } \lambda\in\mu^{+},
	\\	\{0\} & \text{otherwise}.
		\end{cases}
	\end{align*}
	Similarly, 
	\begin{align*}
	\Hom_{\CC R_{n-1}}(V^\mu_{n-1},\widehat{\Res}^{\CC R_n}_{\CC R_{n-1}} (V^\lambda_n))&=\Hom_{\CC R_{n-1}}(V^{\mu}_{n-1},\bigoplus_{\nu'\in\lambda^{-}}V^{\nu'}_{n-1})\\
	&= \begin{cases}
	\Hom_{\CC R_{n-1}}(V^\mu_{n-1},V^\mu_{n-1}) &  \text{if } \mu\in\lambda^{-},
	\\	\{0\} & \text{otherwise}.
	\end{cases}
	\end{align*}
	Since $\lambda\in \mu^{+}$ if and only if $\mu\in\lambda^{-}$, and  $\Hom_{\CC R_n}(V^\lambda_{n},V^\lambda_n)\cong \CC\cong\Hom_{\CC R_{n-1}}(V^{\mu}_{n-1},V^{\mu}_{n-1})$, therefore \eqref{eq:fr} holds.
\end{proof}

\section{Totally propagating partition algebras}\label{sec:tppa_rep}

The totally propagating partition algebra (Definition~\ref{def:total_par}) is a subalgebra of the partition algebra (Definition~\ref{def:par_alg}).   In this section, we give an indexing set of the irreducible representations of totally propagating partition algebras and show that the Bratteli diagram for the tower of these algebras is a simple graph.  Let us first begin with a brief overview of partition algebras.

\subsection{Partition algebras}\label{sec:partition} 
 A set partition of  a finite set $\mathcal{A}$ is a collection $\{B_1,B_2,\ldots,B_s\}$ of mutually disjoint nonempty sets such that $\sqcup_{p=1}^{s}B_p=\mathcal{A}$, where $s\in\ZZ_{> 0}$. The sets $B_p$'s are called the blocks of the given set partition.

Given a set partition of $\{1,2,\ldots,k,1',2',\ldots,k'\}$, draw an undirected graph whose vertices are arranged in two rows such that the top row consists of the vertices $1,2,\ldots,k$, the bottom row consists of the vertices $1',2',\ldots,k'$; and there is a path between two vertices if and only if both vertices lie in the same block of the set partition. This graph is called the partition diagram corresponding to the given set partition. The connected components of a partition diagram are called its blocks, and these correspond to the blocks of the associated set partition.

  \begin{example}
  	The set partition $\{\{1,2,1',3'\},\{4,2'\},\{3,4'\}\}$ of $\{1,2,3,4,1',2',3',4'\}$ corresponds to the partition diagram in Figure \ref{fig:pd}:
  	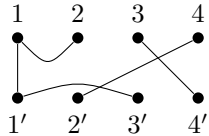
\begin{figure}[ht]\centering
  	\begin{tikzpicture}
  	[scale=0.8,
  	mycirc/.style={circle,fill=black, minimum size=0.1mm, inner sep = 1.5pt}]
  	
  	\node[mycirc,label=above:{$1$}] (n1) at (0,1) {};
  	\node[mycirc,label=above:{$2$}] (n2) at (1,1) {};
  	\node[mycirc,label=above:{$3$}] (n3) at (2,1) {};
  	\node[mycirc,label=above:{$4$}] (n4) at (3,1) {};
  	\node[mycirc,label=below:{$1'$}] (n1') at (0,0) {};
  	\node[mycirc,label=below:{$2'$}] (n2') at (1,0) {};
  	\node[mycirc,label=below:{$3'$}] (n3') at (2,0) {}; 
  	\node[mycirc,label=below:{$4'$}] (n4') at (3,0) {}; 
  
  	\draw (n1)--(n1');
  	\draw (n1)..controls(0.5,0.5).. (n2);
  	\draw (n1')..controls(1,0.3)..(n3');
  	\draw (n4)--(n2');
  \draw (n3)--(n4');
  	\end{tikzpicture}
        \caption{An example of partition diagram}
        \label{fig:pd}
  	\end{figure}
\end{example} 

Let $A_k$ denote the set of all partition diagrams on $\{1,2,\ldots,k,1',2'\ldots,k'\}$. For $d_1,d_2\in A_k$, the composition $d_1\circ d_2\in A_{k}$ is the concatenation of partition diagrams obtained by placing $d_1$ above $d_2$, identifying the bottom row of $d_1$ with the top row of $d_2$, and excluding the connected components that lie entirely in the middle row. With respect to the composition of partition diagrams, $A_k$ is a monoid. Let $A_{k+\frac{1}{2}}$ be the submonoid of $A_{k+1}$ consisting of partition diagrams whose vectices $(k+1)$
and $(k+1)'$ are always in the same block.

\begin{definition}\phantomsection\label{def:par_alg}
	\begin{enumerate}[(i)]
		\item For $\xi\in \CC$, let $\CC A_{k}(\xi)=\CC\text{-span}\{d\mid d\in A_k\}$.
Given basis elements $d_1,d_2\in A_k$, define a multiplication in $\CC A_{k}(\xi)$ as follows:
\begin{equation}\label{eq:mul_rule}
d_1d_2:=\xi^l d_1\circ d_2,
\end{equation}
where $l$ is the number of connected components that lie entirely in the middle row while computing $d_1\circ d_2$. With respect to the multiplication~\eqref{eq:mul_rule}, $\CC A_k(\xi)$ is a unital associative algebra over $\CC$. 

\item	Let
	$\CC A_{k+\frac{1}{2}}(\xi)=\CC\text{-span}\{d\mid d\in A_{k+\frac{1}{2}}\}$.
	The subspace $\CC A_{k+\frac{1}{2}}(\xi)$ is a subalgebra of $\CC A_{k+1}(\xi)$. 
	\end{enumerate}
Both $\CC A_{k}(\xi)$ and $\CC A_{k+\frac{1}{2}}(\xi)$ are called partition algebras.
\end{definition}

\begin{example}
	Let $d_1$ and $d_2$ be the partition diagrams corresponding to $\{\{1,3\},\{2,1'\},\{4\},\{2',3'\},\{4'\}\}$ and $\{\{1,4'\}, \{2\},\{3\},\{4\},\{1'\},\{2',3'\}\}$, respectively. The multiplication $d_1d_2$ in $\CC A_4(\xi)$ is illustrated in Figure \ref{fig:mult}. 
\begin{figure}[ht]\centering
		
	\begin{tikzpicture}
	[scale=0.7,
	mycirc/.style={circle,fill=black, minimum size=0.1mm, inner sep = 1.5pt}]
	\node (1) at (-1,0.5) {$d_1 =$};
	\node[mycirc,label=above:{$1$}] (n1) at (0,1) {};
	\node[mycirc,label=above:{$2$}] (n2) at (1,1) {};
	\node[mycirc,label=above:{$3$}] (n3) at (2,1) {};
	\node[mycirc,label=above:{$4$}] (n4) at (3,1) {};
	
	\node[mycirc,label=below:{$1'$}] (n1') at (0,0) {};
	\node[mycirc,label=below:{$2'$}] (n2') at (1,0) {};
	\node[mycirc,label=below:{$3'$}] (n3') at (2,0) {}; 
	\node[mycirc,label=below:{$4'$}] (n4') at (3,0) {};

	\draw (n2)--(n1');
	\draw (n1)..controls(1,0.5).. (n3);

	\draw (n2')..controls(1.5,0.5)..(n3');
	
	\node (1) at (-1,-2) {$d_2 =$};
	\node[mycirc,label=above:{$1$}] (n6) at (0,-1.5) {};
	\node[mycirc,label=above:{$2$}] (n7) at (1,-1.5) {};
	\node[mycirc,label=above:{$3$}] (n8) at (2,-1.5) {};
	\node[mycirc,label=above:{$4$}] (n9) at (3,-1.5) {};

	\node[mycirc,label=below:{$1'$}] (n6') at (0,-2.5) {};
	\node[mycirc,label=below:{$2'$}] (n7') at (1,-2.5) {};
	\node[mycirc,label=below:{$3'$}] (n8') at (2,-2.5) {}; 
	\node[mycirc,label=below:{$4'$}] (n9') at (3,-2.5) {};

	\draw (n6)--(n9');
	\draw (n7')..controls(1.5,-2)..(n8');

	\draw[dashed] (n1')..controls(-0.5,-1)..(n6);
	\draw[dashed] (n2')..controls(0.5,-1)..(n7);
	\draw[dashed] (n3')..controls(1.5,-1)..(n8);
	\draw[dashed] (n4')..controls(2.5,-1)..(n9);

\node (1) at (-1.2,-4.5) {$d_1d_2 =\xi^{2}$};
\node[mycirc,label=above:{$1$}] (n1) at (0,-4) {};
\node[mycirc,label=above:{$2$}] (n2) at (1,-4) {};
\node[mycirc,label=above:{$3$}] (n3) at (2,-4) {};
\node[mycirc,label=above:{$4$}] (n4) at (3,-4) {};

\node[mycirc,label=below:{$1'$}] (n1') at (0,-5) {};
\node[mycirc,label=below:{$2'$}] (n2') at (1,-5) {};
\node[mycirc,label=below:{$3'$}] (n3') at (2,-5) {}; 
\node[mycirc,label=below:{$4'$}] (n4') at (3,-5) {};

\draw (n2)--(n4');
\draw (n1)..controls(1,-4.5).. (n3);
\draw (n2')..controls(1.5,-4.5)..(n3');

\end{tikzpicture}
\caption{Multiplication}
\label{fig:mult}
\end{figure}
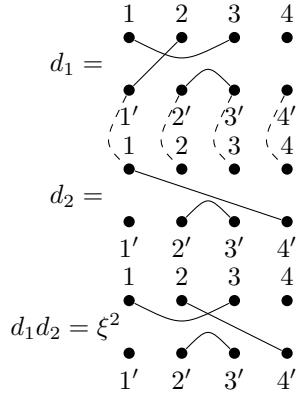
\end{example}

We recall from~\cite[p. 5]{Jones} the orbit basis of $\CC A_k(\xi)$ which will be relevant in Section~\ref{sec:JM}. For $d_1,d_2\in A_k$, we say $d_1$ is coarser than $d_2$ if for $i$ and $j$ in the same block of $d_2$ then $i$ and $j$ are in the same block of $d_1$. The notation $d_1\leq d_2$ means that $d_1$ is coarser than $d_2$ and this defines a partial order on $A_k$. 
 For $d\in A_k$, define $x_d\in \CC A_k(\xi)$ to be the element uniquely satisfying the following relation
\begin{equation}\label{def:x_d}
d=\sum_{d'\leq d} x_{d'}.
\end{equation}
By linearly extending the partial order, it can be seen that the transition matrix between $\{d\mid d\in A_k\}$ and $\{x_d\mid d\in A_k\}$ is unitriangular. So $\{x_d\mid d\in A_k\}$ is  also a basis, called the orbit basis, of 
$\CC A_k(\xi)$. The structure constants of $\CC A_k(\xi)$ with respect to the   
orbit basis were first given in online notes~\cite{Ram2010S} and later these also appeared in
~\cite[Theorem 4.8]{BH19} and~\cite[Lemma 3.1]{MS}.

For $d_1,d_2\in A_k$, a block in $d_1\circ d_2$ is called an internal block if it lies entirely in the middle row while computing $d_1\circ d_2$.
For $1\leq i,j\leq k$, whenever $i'$ and $j'$ are in the same block in $d_1$ if and only if $i$ and $j$ are in the same block in $d_2$, then we say that
the bottom row of $d_1$ matches with the top row of $d_2$. 

\begin{lemma}\label{lm:obprod}
	For $d_1, d_2 \in A_k$, the multiplication of $x_{d_1}$ and $x_{d_2}$ in $\CC A_k(\xi)$ is given by 
	\begin{align*}
	x_{d_1}x_{d_2} = 
	\begin{cases}
	\sum\limits_{d} c_d x_d & \mbox{if the bottom row of } d_1 \mbox{ matches with the top row of } d_2, \\
	0 & \mbox{otherwise,}
	\end{cases}
	\end{align*}
	where the sum is taken over all those $d$ in $A_k$ such that $d$ is coarser than $d_1 \circ d_2$ and the coarsening is done by connecting a block of $d_1$ which is contained entirely in the top row of $d_1$ with a block of $d_2$ which is contained entirely in the bottom row of $d_2$ and 
	\begin{equation*}
	c_d = (\xi - |d|)_{[d_1 \circ d_2]}, 
	\end{equation*}
	where $|d|$ is the number of blocks in $d$, $[d_1 \circ d_2]$ is the number of internal blocks in $d_1 \circ d_2$, and for any polynomial $f(\xi)$ in $\xi$, $b \in \ZZ_{\geq 0},$
	\begin{align*}
	(f(\xi))_b := 
	\begin{cases}
	f(\xi)(f(\xi)-1) \cdots (f(\xi)-b+1) & \mbox{if }  b > 0,\\
	1 & \mbox{if } b = 0.
	\end{cases}
	\end{align*}
	\end{lemma}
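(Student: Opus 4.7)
The plan is to prove this multiplication rule by realizing the orbit basis via the tensor-space action of $\CC A_k(n)$ for positive integers $n$, computing in that realization, and then extending to arbitrary $\xi\in\CC$ by polynomial identity. For each $n\in\ZZ_{>0}$, the algebra $\CC A_k(n)$ acts on $(\CC^n)^{\otimes k}$, and I would identify $x_d$ with the operator whose $(\mathbf{i},\mathbf{j})$ matrix entry is $1$ exactly when the set partition of $\{1,\ldots,k,1',\ldots,k'\}$ induced by the equalities in the multi-index $(i_1,\ldots,i_k,j_1,\ldots,j_k)$ equals $d$, and $0$ otherwise. This identification is consistent with $\eqref{def:x_d}$ because the diagram $d$ itself acts via the matrix entry that is $1$ whenever the multi-index is constant on each block of $d$, equivalently when the induced partition is $\leq d$.

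With this realization, I would compute the $(\mathbf{i},\mathbf{j})$-entry of $x_{d_1}x_{d_2}$ as the number of middle multi-indices $\mathbf{m}$ such that $(\mathbf{i},\mathbf{m})$ induces exactly $d_1$ and $(\mathbf{m},\mathbf{j})$ induces exactly $d_2$. If the partitions of $\{1,\ldots,k\}$ that $d_1$ induces on its bottom row and $d_2$ induces on its top row disagree, no valid $\mathbf{m}$ exists, which gives the zero case. In the matching case, I would decompose the blocks of the concatenation $d_1\circ d_2$ (before internal blocks are deleted) as propagating, top-only from $d_1$, bottom-only from $d_2$, or internal. The $\mathbf{m}$-values in propagating and edge blocks touching the middle are pinned by $\mathbf{i}$ or $\mathbf{j}$, while the $[d_1\circ d_2]$ internal blocks contribute free $\mathbf{m}$-values that must be distinct from each other and from the values already assigned to non-internal blocks.

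The only remaining freedom in the external multi-index $(\mathbf{i},\mathbf{j})$ is that a top-only block of $d_1$ may share its $\mathbf{i}$-value with a bottom-only block of $d_2$, since these blocks are only constrained by the exactness of $d_1$ and $d_2$ respectively. Each such coincidence produces precisely the type of coarsening $d$ of $d_1\circ d_2$ described in the lemma. For a fixed coarsening $d$ with $|d|$ external blocks, the count of admissible $\mathbf{m}$ is $(n-|d|)(n-|d|-1)\cdots(n-|d|-[d_1\circ d_2]+1) = (\xi-|d|)_{[d_1\circ d_2]}$ at $\xi=n$, accounting for the choice of values in $\{1,\ldots,n\}$ for each internal block avoiding those already used externally. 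Collecting by coarsening rewrites $x_{d_1}x_{d_2}$ as the stated sum $\sum_d c_d x_d$. Since both sides of the claimed identity are polynomials in $\xi$ and agree for infinitely many positive integers $\xi=n$, they agree for all $\xi\in\CC$.

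The main obstacle is the combinatorial bookkeeping: one must carefully isolate which blocks of $d_1\circ d_2$ are pinned by external data versus free to coalesce, verify that the admissible coarsenings are exactly those described (namely, mergings of top-only blocks of $d_1$ with bottom-only blocks of $d_2$, and no other kinds), and confirm that the Pochhammer factor $(\xi-|d|)_{[d_1\circ d_2]}$ is independent of the order in which internal-block values are chosen. An alternative, purely algebraic route would use that the transition between $\{d\}$ and $\{x_d\}$ is unitriangular, start from $d_1 d_2 = \xi^{[d_1\circ d_2]}(d_1\circ d_2)$ in the diagram basis, and induct on the partial order $\leq$; but the required cancellation patterns among strictly finer diagrams appear harder to track cleanly than the tensor-space argument.
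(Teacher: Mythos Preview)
The paper does not prove this lemma; it states the result and cites \cite{Ram2010S}, \cite[Theorem 4.8]{BH19}, and \cite[Lemma 3.1]{MS} for the proof. Your approach via the tensor-space realization is correct and is essentially the method used in those references (particularly \cite{BH19}). The key ingredients---the identification of $\phi_k(x_d)$ with the $\{0,1\}$-valued matrix \eqref{al:orbit} recording exact block structure, the matching condition for nonvanishing, the falling-factorial count $(n-|d|)_{[d_1\circ d_2]}$ of labellings of internal blocks avoiding the $|d|$ external values, and the passage from integers $n\geq 2k$ (where $\phi_k$ is injective) to general $\xi$ by polynomial identity---are all present and correctly handled in your sketch.

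The one point worth sharpening in the bookkeeping you flag: in the three-row concatenation, a non-propagating block of $d_1\circ d_2$ that nonetheless touches the middle row (for instance a propagating block of $d_1$ whose middle part, viewed in $d_2$, is a top-only block of $d_2$) has its $\mathbf{m}$-value pinned by $\mathbf{i}$, and that same value is then forced to differ from every $\mathbf{j}$-value on bottom-only blocks of $d_2$ by the exactness constraint on $d_2$. This is what guarantees that such blocks cannot merge with anything, so the admissible coarsenings are exactly the top-only-of-$d_1$ with bottom-only-of-$d_2$ mergings stated in the lemma. Once this is checked, there is no conceptual gap.
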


{\bf Totally propagating partition algebras}.
A block $B$ of a partition diagram in $A_k$ is called a propagating block if $B$ contains vertices from both top and bottom rows, i.e., 
\begin{displaymath}
B\cap \{1,2,\ldots,k\}\neq \emptyset \quad \text{and}\quad B\cap \{1',2',\ldots,k'\}\neq \emptyset.
\end{displaymath}
Let $I_{k}$ be the submonoid of $A_k$ consisting of partition diagrams each of whose blocks are propagating blocks. Let $I_{k+\frac{1}{2}}:=A_{k+\frac{1}{2}}\cap I_{k+1}$.

\begin{definition}\phantomsection\label{def:total_par}
	For $t\in\frac{1}{2}\ZZ_{> 0}$, define $\CC I_{t}=\CC\text{-span}\{d\mid d\in I_t\}$. The subspace $\CC I_t$ is a subalgebra of $\CC A_t(\xi)$.
	We call $\CC I_t$ the totally propagating partition algebra.
\end{definition}

 An important observation is that the multiplication in $\CC I_t$  does not depend on the multiplication factor $\xi$. Specifically, for $d_1,d_2\in I_t$, the multiplication in $\CC I_t$ is given by
\begin{displaymath}
d_1d_2=d_1\circ d_2, 
\end{displaymath} 
since there are no connected components which lie entirely in the middle row while computing 
$d_1\circ d_2$.

For $d\in I_t$, any partition diagram coarser than $d$ is also an element of $I_t$ and therefore, $x_d\in \CC I_t$. Lemma~\ref{lm:obprod} implies the following corollary which specializes $x_{d_1}x_{d_2}$ for $d_1,d_2\in I_t$.

\begin{corollary}\label{coro:obprod}
	For $d_1, d_2 \in I_t$, the multiplication of $x_{d_1}$ and $x_{d_2}$ in $\CC I_t$ is given by 
	\begin{align*}
	x_{d_1}x_{d_2} = 
	\begin{cases}
	 x_{d_1\circ d_2} & \mbox{if the bottom row of } d_1 \mbox{ matches with the top row of } d_2, \\
	0 & \mbox{otherwise}.
	\end{cases}
	\end{align*}
\end{corollary}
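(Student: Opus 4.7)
The plan is to derive the corollary directly from Lemma~\ref{lm:obprod} by verifying that for $d_1, d_2 \in I_t$, the general formula collapses to a single term with coefficient one. The "otherwise" case is immediate: if the bottom row of $d_1$ does not match the top row of $d_2$, then Lemma~\ref{lm:obprod} gives $x_{d_1}x_{d_2} = 0$ without any input about the totally propagating condition.

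For the matching case, I would exploit two structural features of elements of $I_t$. First, since every block of $d_1$ contains vertices in both the top and the bottom row of $d_1$, there is no block of $d_1$ lying entirely in the top row of $d_1$; symmetrically, no block of $d_2$ lies entirely in the bottom row of $d_2$. Consequently, the only diagram $d$ coarser than $d_1 \circ d_2$ obtainable by the "connect a top-only block of $d_1$ with a bottom-only block of $d_2$" procedure of Lemma~\ref{lm:obprod} is $d_1 \circ d_2$ itself, via the empty coarsening (no such blocks exist to connect). Second, any block of $d_1 \circ d_2$ inherits a top-row vertex from the $d_1$-block that produced it and a bottom-row vertex from the $d_2$-block, so $d_1 \circ d_2 \in I_t$, and in particular no block of $d_1 \circ d_2$ lies entirely in the middle row; hence $[d_1 \circ d_2] = 0$.

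Combining these two observations, the only surviving term in Lemma~\ref{lm:obprod} is $c_{d_1 \circ d_2}\,x_{d_1 \circ d_2}$, and
\[
c_{d_1 \circ d_2} = (\xi - |d_1 \circ d_2|)_{[d_1 \circ d_2]} = (\xi - |d_1 \circ d_2|)_0 = 1,
\]
so $x_{d_1}x_{d_2} = x_{d_1 \circ d_2}$, as required. There is no serious obstacle: the argument is bookkeeping about which kinds of blocks can exist in a totally propagating diagram and in the composition of two such diagrams; the main thing to be careful about is to note that the empty coarsening always yields $d_1\circ d_2$ itself, which is why the $d_1\circ d_2$ term appears (with coefficient one) rather than vanishing.
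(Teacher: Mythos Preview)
Your argument is correct and is exactly the unpacking of Lemma~\ref{lm:obprod} that the paper intends: the paper simply states that the corollary follows from Lemma~\ref{lm:obprod} without spelling out the details, and you have supplied precisely those details (no top-only or bottom-only blocks in totally propagating diagrams, hence no nontrivial coarsenings and no internal blocks, so the sum collapses to the single term $x_{d_1\circ d_2}$ with coefficient $(\xi-|d_1\circ d_2|)_0=1$).
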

 
\subsection{Schur--Weyl dualities}\label{sec:SWD}

The symmetric group $S_n$ acts on the vector space $V=\CC^n$ with the standard basis $\{e_1,e_2,\ldots,e_n\}$ naturally, i.e.,
$\beta.e_{i}=e_{\beta(i)}$, where $\beta\in S_n$ and $1\leq i\leq n$. The $k$-fold tensor space $V^{\otimes k}$ with a basis
\begin{equation}\label{eq:tensor}
\{e_{i_1}\otimes e_{i_2}\otimes \cdots \otimes e_{i_k}\mid 1\leq i_1,i_2,\ldots,i_k\leq n\}
\end{equation}
is a representation of $S_n$ with respect to the diagonal action.  
We identify $S_{n-1}$ with the subgroup consisting of permutation matrices in $S_n$ which fix $e_n$. With this identification, $V^{\otimes (k+\frac{1}{2})}:=V^{\otimes k}\otimes e_n$ is a representation of $S_{n-1}$.

The partition algebra $\CC A_k(n)$ acts on $V^{\otimes k}$ on the right by the following map:
 \begin{align*}
 \phi_{k}:\CC A_k(n)\to \End_{\CC}(V^{\otimes k}),
 \end{align*}
 where, for $d\in A_k$, the map $\phi_k(d)$ on the basis element $e_{i_1}\otimes e_{i_{2}}\otimes\cdots\otimes e_{i_k}$ is given by
\begin{equation*}
(e_{i_1}\otimes e_{i_{2}}\otimes\cdots\otimes e_{i_k})\phi_k(d)=\sum_{1\leq i_{1'},i_{2'},\ldots,i_{k'}\leq n} (\phi_k(d))^{i_1,i_2\ldots,i_k}_{i_{1'},i_{2'}\ldots,i_{k'}}e_{i_{1'}}\otimes e_{i_{2'}}\otimes\cdots\otimes e_{i_{k'}}
\end{equation*}
with the coefficients
\begin{align}\label{al:coeff}
(\phi_k(d))^{i_1,i_2\ldots,i_k}_{i_{1'},i_{2'}\ldots,i_{k'}}=\begin{cases}
1 & \text{ if } i_r=i_s \text{ when $r$ and $s$ are in the same block of }d,\\
0& \text{ otherwise}.
\end{cases}
\end{align}

By the definition of $x_d$ in \eqref{def:x_d}, we see that the coefficient of $e_{i_{1'}}\otimes e_{i_{2'}}\otimes\cdots\otimes e_{i_{k'}}$ in the linear expansion of 
$(e_{i_1}\otimes e_{i_{2}}\otimes\cdots\otimes e_{i_k})\phi(x_d)$ with respect to the basis~\eqref{eq:tensor} is 
\begin{align}\label{al:orbit}
(\phi_k(x_d))^{i_1,i_2\ldots,i_k}_{i_{1'},i_{2'}\ldots,i_{k'}}=\begin{cases}
1 & \text{ if } i_r=i_s \text{ if and only if $r$ and $s$ are in the same block of }d,\\
0& \text{ otherwise}.
\end{cases}
\end{align}

Let $\phi_{k+\frac{1}{2}}:\CC A_{k+\frac{1}{2}}(n)\to\End_{\CC}(V^{\otimes{(k+\frac{1}{2}})})$ denote the restriction $\phi_{(k+1)\mid_{\CC A_{k+\frac{1}{2}}(n)}}$ of the map $\phi_{k+1}$ to $\CC A_{k+\frac{1}{2}}(n)$. The map $\phi_{k+\frac{1}{2}}$ gives a right action of $\CC A_{k+\frac{1}{2}}(n)$ on $V^{\otimes{(k+\frac{1}{2}})}$. We recall Schur--Weyl dualities between partition algebras and symmetric groups from~\cite[Theorem 3.6]{HR05}.

 \begin{theorem}\phantomsection\label{thm:swd_partition}
 	\begin{enumerate}[(i)]
 	\item The image of $\phi_k:\CC A_k(n)\to \End_{\CC}(V^{\otimes k})$ is $\End_{\CC S_n}(V^{\otimes k})$. The kernel of $\phi_k$ is $\CC$-span $\{x_d\mid d \text{ has more than $n$ } blocks\}$. Thus, for $n\geq 2k$, $\CC A_k(n)\cong \End_{\CC S_n}(V^{\otimes k})$.
 	\item The image of $\phi_{k+\frac{1}{2}}:\CC A_{k+\frac{1}{2}}(n)\to \End_{\CC}(V^{\otimes (k+\frac{1}{2})})$ is $\End_{\CC S_{n-1}}(V^{\otimes (k+\frac{1}{2})})$. The kernel of $\phi_{k+\frac{1}{2}}$ is $\CC$-span $\{x_d\mid d \text{ has more than $n$ } blocks\}$. Thus, for $n\geq 2k+1$, $\CC A_{k+\frac{1}{2}}(n)\cong \End_{\CC S_{n-1}}(V^{\otimes (k+\frac{1}{2})})$.
 	\end{enumerate}
 \end{theorem}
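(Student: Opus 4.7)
The plan is to prove (i) via the standard three-step orbit-basis argument---commutation, surjectivity onto the commutant, and identification of the kernel---and then deduce (ii) by restricting the action to the subspace $V^{\otimes k} \otimes e_n$ and reanalyzing orbits under the stabilizer $S_{n-1}$ of $e_n$. The key observation driving the whole proof is formula~\eqref{al:orbit}: the matrix coefficient $(\phi_k(x_d))^{i_1,\ldots,i_k}_{i_{1'},\ldots,i_{k'}}$ is exactly the indicator that the set partition of $\{1,\ldots,k,1',\ldots,k'\}$ recorded by the equality pattern of the tuple $(i_1,\ldots,i_k,i_{1'},\ldots,i_{k'})$ coincides with the set partition $d$.

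First I would establish that the image of $\phi_k$ is contained in $\End_{\CC S_n}(V^{\otimes k})$. For $\sigma \in S_n$ and any index tuple, applying $\sigma$ to every entry preserves the equality pattern, so by~\eqref{al:orbit} the operator $\phi_k(x_d)$ commutes with the diagonal $S_n$-action. Since $\{x_d \mid d \in A_k\}$ is a basis of $\CC A_k(n)$, this yields $\mathrm{Im}(\phi_k) \subseteq \End_{\CC S_n}(V^{\otimes k})$. Next, for surjectivity: an endomorphism $T$ of $V^{\otimes k}$ lies in the commutant of $S_n$ if and only if its matrix entries are constant on $S_n$-orbits of tuples in $\{1,\ldots,n\}^{2k}$. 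Two tuples lie in the same orbit precisely when they induce the same set partition of $\{1,\ldots,k,1',\ldots,k'\}$, and such a set partition is realized over $\{1,\ldots,n\}$ exactly when the number of blocks is at most $n$. The characteristic function of the orbit corresponding to $d$ is $\phi_k(x_d)$, so $\{\phi_k(x_d) \mid d \in A_k,\ |d| \le n\}$ is a basis of $\End_{\CC S_n}(V^{\otimes k})$, giving both surjectivity and a basis for the image.

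For the kernel: if $|d| > n$, no tuple in $\{1,\ldots,n\}^{2k}$ can have equality pattern $d$ (this would require at least $|d|$ distinct values), hence $\phi_k(x_d) = 0$. Conversely, the linear independence established in the preceding paragraph shows that $\phi_k(x_d) \neq 0$ whenever $|d| \le n$, so $\ker \phi_k = \CC\text{-span}\{x_d \mid |d| > n\}$. When $n \ge 2k$, every $d \in A_k$ has at most $2k \le n$ blocks, so the kernel vanishes and $\phi_k$ is an algebra isomorphism onto $\End_{\CC S_n}(V^{\otimes k})$.

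For (ii), I would exploit the fact that $A_{k+\frac{1}{2}}$ consists of diagrams in $A_{k+1}$ whose vertices $k+1$ and $(k+1)'$ lie in the same block. For $d \in A_{k+\frac{1}{2}}$ and a vector of the form $v \otimes e_n \in V^{\otimes k} \otimes e_n$, the block containing $k+1$ fixes the index $i_{k+1} = n$, and the matching requirement in~\eqref{al:orbit} forces $i_{(k+1)'} = n$ as well; thus $\phi_{k+1}(x_d)$ preserves the subspace $V^{\otimes (k+\frac{1}{2})}$. The same orbit analysis, now applied to $S_{n-1}$-orbits of tuples whose $(k+1)$-st and $(k+1)'$-st coordinates are fixed at $n$, produces a bijection with diagrams in $A_{k+\frac{1}{2}}$ of at most $n$ blocks, and the identical kernel argument completes the proof. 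The only subtle point I would watch carefully is the bijection between $S_n$-orbits and set partitions, particularly in (ii) where forcing two distinguished vertices into the same block (and fixing their index to $n$) must be correctly identified with the $S_{n-1}$-orbit data; the rest of the argument is essentially bookkeeping built on~\eqref{al:orbit} and the unitriangular change of basis~\eqref{def:x_d}.
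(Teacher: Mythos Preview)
Your argument is correct and is essentially the standard orbit-basis proof of Schur--Weyl duality for partition algebras. Note, however, that the paper does not supply its own proof of this theorem: it is stated as a recalled result, citing \cite[Theorem 3.6]{HR05}. So there is no in-paper proof to compare against; what you have written is precisely the argument one finds in that reference, built on the key formula~\eqref{al:orbit} and the unitriangular relation~\eqref{def:x_d}.

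One small point worth making explicit in your write-up of part~(ii): when you identify $S_{n-1}$-orbits of tuples in $V^{\otimes(k+\frac{1}{2})}$ with diagrams in $A_{k+\frac{1}{2}}$ having at most $n$ blocks, you are implicitly using that in any such tuple the block containing $k+1$ already carries the value $n$, so the remaining blocks take distinct values in $\{1,\ldots,n-1\}$ and $S_{n-1}$ acts transitively on such assignments. This is exactly the ``subtle point'' you flagged at the end, and spelling it out removes any ambiguity about why the orbit count matches the diagram count and why the kernel threshold is again $n$ (rather than $n-1$) blocks.
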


Let $\tilde{\psi}_k:\CC R_n\to \End_{\CC}(V^{\otimes k})$ be the algebra homomorphism arising from the action of $\CC R_n$ on $V^{\otimes k}$ and let $\tilde{\phi}_k$ be the restriction map
$\phi_{k\mid_{\CC I_k}}$ of $\phi_k$ to $\CC I_k$. For $d\in I_k$, since each block of $d$ is propagating, therefore from \eqref{al:coeff} we have:
\begin{align*}
(e_{i_1}\otimes e_{i_{2}}\otimes\cdots\otimes e_{i_k})\tilde{\phi}_{k}(d)=\begin{cases}
e_{i_{1'}}\otimes e_{i_{2'}}\otimes \cdots\otimes e_{i_{k'}}& \text{if $i_r=i_s$ when
 $r$ and $s$ are in}\\ &\text{the same block of $d$},\\
0 &\text{ otherwise}.
\end{cases}
\end{align*}

 The following theorem is the Schur--Weyl duality~\cite[Theorem 1]{KM08} between the actions of $\CC I_k$ and
$\CC R_n$ on $V^{\otimes k}$.
\begin{theorem}\phantomsection
	\label{thm:swd_rook}
	\begin{enumerate}[(i)]
		\item The image of $\tilde{\phi}_{k}:\CC I_k\to \End_{\CC}(V^{\otimes k})$
		is $\End_{\CC R_n}(V^{\otimes k})$. The kernel of $\tilde{\phi}_{k}$ is $\CC$-span 
		$\{x_d\mid d\in I_k \text{ and } d \text{ has more than } n \text{ blocks}\}$.
		 In particular, when 
		 $n\geq k$, $\CC I_k\cong \End_{\CC R_n}(V^{\otimes k})$.
		\item The image of $\tilde{\psi}_{k}:\CC R_n\to \End_{\CC}(V^{\otimes k})$ is  $
		\End_{\CC I_k}(V^{\otimes k})$.
	\end{enumerate}
\end{theorem}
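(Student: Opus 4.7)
The plan is to leverage the Schur–Weyl duality for $(\CC S_n, \CC A_k(n))$ from Theorem~\ref{thm:swd_partition}(i) together with a dimension count derived from Theorem~\ref{coro:bij}, and then invoke the double centralizer theorem for part (ii). First I would verify the containment $\tilde\phi_k(\CC I_k) \subseteq \End_{\CC R_n}(V^{\otimes k})$. Since $I_k \subseteq A_k$, Theorem~\ref{thm:swd_partition}(i) already gives $\tilde\phi_k(\CC I_k) \subseteq \End_{\CC S_n}(V^{\otimes k})$, so the extra condition to check is equivariance under $P_1$ alone, since $\{s_1,\ldots,s_{n-1},P_1\}$ generates $\CC R_n$. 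The operator $P_1$ acts on $V^{\otimes k}$ diagonally, killing any basis tensor having an index equal to $1$, while the explicit formula for $\tilde\phi_k(d)$ with $d \in I_k$ sends $e_{i_1}\otimes\cdots\otimes e_{i_k}$ either to $e_{i_{1'}}\otimes\cdots\otimes e_{i_{k'}}$ or to zero. Because every block of $d$ is propagating, each $i_{r'}$ coincides with some $i_s$ for a top vertex $s$ in the same block, so the multiset $\{i_{r'}\}$ coincides with $\{i_s\}$; consequently the output contains a $1$ iff the input does, giving $\tilde\phi_k(d)P_1 = P_1\tilde\phi_k(d)$.

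Second I would pin down the kernel and run a dimension count. Since $\{x_d : d \in I_k\}$ is a basis of $\CC I_k$ and Theorem~\ref{thm:swd_partition}(i) identifies $\ker\phi_k$ with the span of those $x_d$ having more than $n$ blocks, restriction immediately gives
\[
\ker\tilde\phi_k = \CC\text{-span}\{x_d : d \in I_k,\; d \text{ has more than } n \text{ blocks}\}.
\]
A partition diagram in $I_k$ with exactly $r$ blocks is determined by a pair of set partitions of $\{1,\ldots,k\}$ with $r$ blocks together with a bijection between their blocks, yielding $S(k,r)^2 r!$ such diagrams and therefore
\[
\dim \tilde\phi_k(\CC I_k) = \sum_{r=0}^{\min(k,n)} S(k,r)^2\, r!.
\]
On the other side, Theorem~\ref{coro:bij} gives the multiplicity of $V^\lambda_n$ in $V^{\otimes k}$ as $S(k,r) f^\lambda$ for $\lambda$ a partition of $r$, and using $\sum_{\lambda \vdash r}(f^\lambda)^2 = r!$ one obtains
\[
\dim \End_{\CC R_n}(V^{\otimes k}) = \sum_{r=0}^{\min(k,n)}\sum_{\lambda \vdash r} \bigl(S(k,r) f^\lambda\bigr)^2 = \sum_{r=0}^{\min(k,n)} S(k,r)^2\, r!.
\]
Matching dimensions, combined with the inclusion already established, forces $\tilde\phi_k(\CC I_k) = \End_{\CC R_n}(V^{\otimes k})$. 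When $n \geq k$ no diagram in $I_k$ has more than $n$ blocks, so $\tilde\phi_k$ is an isomorphism.

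For part (ii), once part (i) is in hand I would apply the double centralizer theorem: $\CC R_n$ is semisimple over $\CC$, so the image $\tilde\psi_k(\CC R_n)$ in $\End_\CC(V^{\otimes k})$ equals the centralizer of its own centralizer. By part (i) that centralizer is $\End_{\CC R_n}(V^{\otimes k}) = \tilde\phi_k(\CC I_k)$, and commuting with $\tilde\phi_k(\CC I_k)$ is the same as commuting with $\CC I_k$, so $\tilde\psi_k(\CC R_n) = \End_{\CC I_k}(V^{\otimes k})$.

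The main obstacle is the $P_1$-equivariance of $\tilde\phi_k(d)$ for $d \in I_k$, which is a short but essential bookkeeping step that crucially uses the propagating hypothesis on every block; after that, the argument is a restriction of Theorem~\ref{thm:swd_partition}(i) to $\CC I_k$ together with the dimension comparison made possible by Theorem~\ref{coro:bij}.
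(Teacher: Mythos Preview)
Your argument is correct. Note, however, that the paper does not actually prove Theorem~\ref{thm:swd_rook}: it is quoted from Kudryavtseva and Mazorchuk~\cite[Theorem~1]{KM08} as a known result, with no proof supplied. So there is no ``paper's own proof'' to compare against.

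That said, your proposal is a genuine, self-contained proof built entirely from machinery already present in the paper, and it is worth recording what it buys. You reduce the image statement to Theorem~\ref{thm:swd_partition}(i) plus a direct check of $P_1$-equivariance (where the propagating hypothesis is used exactly once, to ensure the set of indices is preserved), read off the kernel by intersecting $\ker\phi_k$ with the orbit-basis span of $\CC I_k$, and then close the gap between $\tilde\phi_k(\CC I_k)\subseteq\End_{\CC R_n}(V^{\otimes k})$ and equality via the dimension count $\sum_{r\le\min(k,n)} S(k,r)^2 r!$ coming from Theorem~\ref{coro:bij} (equivalently Solomon's multiplicity formula) together with $\sum_{\lambda\vdash r}(f^\lambda)^2=r!$. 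There is no circularity: Theorem~\ref{coro:bij} rests on Theorem~\ref{prop:charc} and Corollary~\ref{prop:brat_rook}, both proved by pure character theory of $\CC R_n$ with no appeal to Theorem~\ref{thm:swd_rook}. Part~(ii) via the double centralizer theorem is standard once part~(i) is in hand. Compared with citing~\cite{KM08}, your route has the advantage of making the result a corollary of the classical $(\CC S_n,\CC A_k(n))$ duality plus an elementary dimension identity, at the cost of invoking Theorem~\ref{coro:bij} rather than arguing directly.
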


Let $\tilde{\psi}_{k+\frac{1}{2}}:\CC R_{n-1}\to \End_{\CC}(V^{\otimes (k+\frac{1}{2})})$ be the algebra homomorphism arising from the action of $\CC R_{n-1}$ on $V^{\otimes (k+\frac{1}{2})}$ 
and let $\tilde{\phi}_{k+\frac{1}{2}}$ be the restriction map
$\phi_{k+\frac{1}{2}\mid_{\CC I_{k+\frac{1}{2}}}}$ of $\phi_{k+\frac{1}{2}}$ to $\CC I_{k+\frac{1}{2}}$. Then as a corollary of Theorem~\ref{thm:swd_partition}
and Theorem~\ref{thm:swd_rook}, we obtain:

\begin{corollary}\phantomsection
	\label{thm:swdfrook_half}
		\begin{enumerate}[(i)]
			\item The image of $\tilde{\phi}_{k+\frac{1}{2}}:\CC I_{k+\frac{1}{2}}\to \End_{\CC}(V^{\otimes (k+\frac{1}{2})})$
			is $\End_{\CC R_{n-1}}(V^{\otimes (k+\frac{1}{2})})$. The kernel of $\tilde{\phi}_{k+\frac{1}{2}}$ is 
		$\CC\text{-span } \{x_d\mid d\in I_{k+\frac{1}{2}} \text{ and } d \text{ has more than } n \text{ blocks}\}$. In particular, when 
			$n\geq k+1$, $\CC I_{k+\frac{1}{2}}\cong \End_{\CC R_{n-1}}(V^{\otimes (k+\frac{1}{2})})$.
		
			\item  The image of $\tilde{\psi}_{k+\frac{1}{2}}:\CC R_{n-1}\to \End_{\CC}(V^{\otimes (k+\frac{1}{2})})$ is 
			$\End_{\CC I_{k+\frac{1}{2}}}(V^{\otimes (k+\frac{1}{2})})$.
		\end{enumerate}
	
\end{corollary}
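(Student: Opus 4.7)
The plan is to derive Corollary~\ref{thm:swdfrook_half} from the integer-level Schur--Weyl dualities already established (Theorem~\ref{thm:swd_partition} and Theorem~\ref{thm:swd_rook}), exploiting the inclusions $\CC I_{k+\frac{1}{2}} \subset \CC I_{k+1}$ and $\CC R_{n-1} \subset \CC R_n$ together with the identification $V^{\otimes(k+\frac{1}{2})} = V^{\otimes k} \otimes e_n \subset V^{\otimes(k+1)}$.

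For part (i), the containment $\tilde{\phi}_{k+\frac{1}{2}}(\CC I_{k+\frac{1}{2}}) \subseteq \End_{\CC R_{n-1}}(V^{\otimes(k+\frac{1}{2})})$ is immediate: for $d \in I_{k+\frac{1}{2}} \subseteq I_{k+1}$, Theorem~\ref{thm:swd_rook}(i) gives $\tilde{\phi}_{k+1}(d) \in \End_{\CC R_n}(V^{\otimes(k+1)})$, which in particular commutes with the subalgebra $\CC R_{n-1}$. Since $k+1$ and $(k+1)'$ lie in the same block of $d$, the subspace $V^{\otimes k}\otimes e_n$ is stable under $\tilde{\phi}_{k+1}(d)$, and the restriction coincides with $\tilde{\phi}_{k+\frac{1}{2}}(d)$. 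For the reverse containment, I will begin with $f \in \End_{\CC R_{n-1}}(V^{\otimes(k+\frac{1}{2})}) \subseteq \End_{\CC S_{n-1}}(V^{\otimes(k+\frac{1}{2})})$; by Theorem~\ref{thm:swd_partition}(ii), $f = \phi_{k+\frac{1}{2}}(\sum_d c_d\, x_d)$ for some $d \in A_{k+\frac{1}{2}}$, each having at most $n$ blocks. The crux will be to show that summands with $d \notin I_{k+\frac{1}{2}}$ (equivalently, $d$ has a non-propagating block, necessarily contained entirely in $\{1,\ldots,k\}$ or in $\{1',\ldots,k'\}$ since the block containing $k+1$ is propagating by definition of $A_{k+\frac{1}{2}}$) can be discarded. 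This should be forced by imposing commutation of $f$ with $P_j \in \CC R_{n-1}$, $1 \leq j \leq n-1$: the orbit-basis action formula~\eqref{al:orbit} shows that $P_j$ annihilates tensors having a coordinate $\leq j$, while a non-propagating block of $d$ imposes a repeated-coordinate constraint on only one row, which is incompatible with $P_j$-equivariance across all $j$ unless the corresponding $c_d$ vanishes. The kernel description then follows by intersecting $\ker \phi_{k+\frac{1}{2}}$, spanned by $x_d$ with more than $n$ blocks (Theorem~\ref{thm:swd_partition}(ii)), with the subspace $\CC I_{k+\frac{1}{2}} = \CC\text{-span}\{x_d \mid d \in I_{k+\frac{1}{2}}\}$.

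Part (ii) is then a formal consequence of (i) via the double centralizer theorem: both $\CC R_{n-1}$ and $\CC I_{k+\frac{1}{2}}$ are semisimple over $\CC$, so $V^{\otimes(k+\frac{1}{2})}$ is a semisimple $\CC R_{n-1}$-module; hence the centralizer of $\tilde{\phi}_{k+\frac{1}{2}}(\CC I_{k+\frac{1}{2}})$ in $\End_{\CC}(V^{\otimes(k+\frac{1}{2})})$ equals $\tilde{\psi}_{k+\frac{1}{2}}(\CC R_{n-1})$, and by part (i) this centralizer is precisely $\End_{\CC I_{k+\frac{1}{2}}}(V^{\otimes(k+\frac{1}{2})})$.

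The main obstacle is the elimination of non-propagating summands in the $\supseteq$ direction of part (i): the combinatorial bookkeeping of which orbit diagrams survive under $P_j$-commutation requires careful use of~\eqref{al:orbit}, and this is the step that genuinely uses the rook monoid generators beyond the symmetric group to distinguish totally propagating diagrams from general ones. Once this step is carried out, the rest of the corollary falls into place as a near-formal combination of the two input Schur--Weyl dualities.
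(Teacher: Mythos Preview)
Your approach is correct and is precisely the route the paper has in mind: the corollary is stated in the paper without any explicit argument beyond the remark that it follows from Theorem~\ref{thm:swd_partition} and Theorem~\ref{thm:swd_rook}, and your proposal is a faithful unpacking of that one-line justification. The reduction of the $\supseteq$ direction in part~(i) to Theorem~\ref{thm:swd_partition}(ii) followed by elimination of non-propagating summands via commutation with the diagonal idempotents of $R_{n-1}$, and the deduction of part~(ii) from part~(i) by the double centralizer theorem, are exactly what is implicit in the paper's phrasing.

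One small remark on the step you flag as the main obstacle: it can be made clean by observing that commutation with \emph{all} diagonal matrices in $R_{n-1}$ forces every nonzero matrix entry $(\underline{i},\underline{j})$ of $f$ (in the standard basis of $V^{\otimes(k+\frac{1}{2})}$) to satisfy $\{i_s:i_s\neq n\}=\{j_s:j_s\neq n\}$ as sets. By the orbit-basis formula~\eqref{al:orbit}, the operators $\phi_{k+\frac{1}{2}}(x_d)$ for $d\in I_{k+\frac{1}{2}}$ are supported entirely on such ``equal-support'' matrix units, whereas those for $d\in A_{k+\frac{1}{2}}\setminus I_{k+\frac{1}{2}}$ (with at most $n$ blocks) are supported entirely on ``unequal-support'' matrix units, since the value attached to a non-propagating block is distinct from every index on the other row and also distinct from $n$ (because $k+1$ and $(k+1)'$ share a block). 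Linear independence of the $\phi_{k+\frac{1}{2}}(x_d)$ for $|d|\leq n$ then forces $c_d=0$ for non-propagating $d$, completing the argument you outlined.
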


\subsection{Irreducible representations of totally propagating partition algebras}\label{sec:irrep_tppa}
We describe indexing sets of the irreducible representations of $\CC I_k$ and $\CC I_{k+\frac{1}{2}}$, and determine the branching rule for $\CC I_k\subset\CC I_{k+\frac{1}{2}}\subset \CC I_{k+1}$. It is important to note that the embedding~\eqref{eq:embedding} $\CC I_{k}\subset \CC I_{k+\frac{1}{2}}$ is not induced from the embedding~\cite[Equation (2.2)]{HR05} of partition algebras $\CC A_k(\xi)\subset \CC A_{k+\frac{1}{2}}(\xi)$.

\begin{theorem}\label{thm:indexing_set}
	For $k\in\ZZ_{> 0}$, the irreducible representations of $\CC I_k$ and $\CC I_{k+\frac{1}{2}}$ are indexed by the elements 
	of $\widehat{I}_k:=\Lambda_{\leq k}\setminus\{\emptyset\}$ and $\widehat{I}_{k+\frac{1}{2}}:=\Lambda_{\leq k}$,  respectively. 
\end{theorem}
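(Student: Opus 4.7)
The plan is to leverage the Schur--Weyl dualities of Theorem~\ref{thm:swd_rook} and Corollary~\ref{thm:swdfrook_half} together with the tensor-space decomposition of Theorem~\ref{coro:bij}. Fix $n\geq k+1$. Every $d\in I_k$ has at most $k$ blocks and every $d\in I_{k+\frac{1}{2}}$ has at most $k+1$ blocks (each block is propagating and hence contains at least one top vertex), so the kernels of $\tilde{\phi}_k$ and $\tilde{\phi}_{k+\frac{1}{2}}$ described in those statements vanish, yielding algebra isomorphisms
\begin{displaymath}
\CC I_k\cong\End_{\CC R_n}(V^{\otimes k}),\qquad \CC I_{k+\frac{1}{2}}\cong\End_{\CC R_{n-1}}(V^{\otimes (k+\frac{1}{2})}).
\end{displaymath}
Since $\CC R_n$ and $\CC R_{n-1}$ are semisimple, the double centralizer theorem identifies the irreducible representations of these endomorphism algebras with the distinct isomorphism classes of irreducible $\CC R_n$- (respectively $\CC R_{n-1}$-) modules that occur with positive multiplicity in $V^{\otimes k}$ (respectively $V^{\otimes (k+\frac{1}{2})}$).

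For $\CC I_k$, Theorem~\ref{coro:bij} gives that the multiplicity of $V^{\lambda}_n$ in $(\CC^n)^{\otimes k}$ equals $S(k,|\lambda|)\,f^{\lambda}$. Since $f^{\lambda}\geq 1$, this is positive if and only if $1\leq |\lambda|\leq k$, and because $n\geq k+1$ every such $\lambda$ already lies in $\Lambda_{\leq n}$. Hence the irreducibles of $\CC I_k$ are indexed exactly by $\Lambda_{\leq k}\setminus\{\emptyset\}=\widehat{I}_k$.

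For $\CC I_{k+\frac{1}{2}}$, the key observation is that $R_{n-1}\hookrightarrow R_n$ fixes the vector $e_n$; thus $v\mapsto v\otimes e_n$ is a $\CC R_{n-1}$-module isomorphism $V^{\otimes k}\xrightarrow{\sim}V^{\otimes (k+\frac{1}{2})}$, so I only need to decompose $\Res^{\CC R_n}_{\CC R_{n-1}}V^{\otimes k}$. Combining Theorem~\ref{coro:bij} with the branching rule in Proposition~\ref{prop:res} yields
\begin{displaymath}
\Res^{\CC R_n}_{\CC R_{n-1}}V^{\otimes k}\;\cong\;\bigoplus_{\lambda\in\Lambda_{\leq k}\setminus\{\emptyset\}}\;\bigoplus_{\nu\in\lambda^{-,=}}\big(V^{\nu}_{n-1}\big)^{\oplus S(k,|\lambda|)f^{\lambda}}.
\end{displaymath}
A diagram $\nu\in\Lambda_{\leq n-1}$ occurs iff $\nu\in\lambda^{-,=}$ for some nonempty $\lambda\in\Lambda_{\leq k}$. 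Choosing $\lambda=\nu$ works for every nonempty $\nu\in\Lambda_{\leq k}$, while $\nu=\emptyset$ is reached from $\lambda=(1)$, which is available because $k\geq 1$; conversely $\nu\in\lambda^{-,=}$ forces $|\nu|\leq|\lambda|\leq k$. Together these identify the indexing set as $\Lambda_{\leq k}=\widehat{I}_{k+\frac{1}{2}}$.

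The argument is essentially formal once Theorems~\ref{thm:swd_rook}, \ref{coro:bij} and Proposition~\ref{prop:res} are in hand, so I do not expect any serious technical obstacle. The only points requiring care are choosing $n$ large enough to make both Schur--Weyl surjections into isomorphisms, and handling the small cases in the branching rule — in particular, the appearance of the empty diagram in the half-integer case is guaranteed precisely by the hypothesis $k\geq 1$, which makes $(1)$ available in $\Lambda_{\leq k}\setminus\{\emptyset\}$ to ``lower'' to $\emptyset$.
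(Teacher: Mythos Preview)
Your proposal is correct and follows essentially the same route as the paper: choose $n$ large enough so that the Schur--Weyl maps are isomorphisms, then invoke the double centralizer theorem and identify which irreducible $\CC R_n$- (resp.\ $\CC R_{n-1}$-) modules occur in tensor space. The only cosmetic difference is that for the integer case the paper appeals to the Kronecker product decomposition (Corollary~\ref{prop:brat_rook}) applied iteratively, whereas you quote its downstream consequence, the closed multiplicity formula of Theorem~\ref{coro:bij}; for the half-integer case both arguments use Proposition~\ref{prop:res} in exactly the same way.
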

\begin{proof}
	Choose $n\in\ZZ_{> 0}$ such that $n\geq k$. By applying Corollary~\ref{prop:brat_rook}, we see that in the decomposition of $V^{\otimes k}$, only those irreducible representations of $\CC R_n$ appear which are indexed by the elements of $\widehat{I}_k$. Since $n\geq k$, by Theorem~\ref{thm:swd_rook} we have
	$\CC I_k\cong \End_{\CC R_n}(V^{\otimes k})$. So, by the centralizer theorem~\cite[Theorem 5.4]{HR05}, the irreducible representations of $\CC I_k$ are indexed by the elements of $\widehat{I}_k$.

Similarly, by choosing $n\in\ZZ_{>0}$ such that $n\geq k+1$ and then applying Corollary~\ref{thm:swdfrook_half}, Proposition~\ref{prop:res} and the centralizer theorem, we get that the irreducible representations of $\CC I_{k+\frac{1}{2}}$ are indexed by the elements of $\widehat{I}_{k+\frac{1}{2}}$. 
\end{proof}

Suppose $I_{k}^{\lambda}$ and $I_{k+\frac{1}{2}}^{\mu}$ denote the irreducible representations indexed by $\lambda\in\widehat{I}_{k}$ and $\mu\in \widehat{I}_{k+\frac{1}{2}}$ of $\CC I_k$ and $\CC I_{k+\frac{1}{2}}$, respectively. 
\begin{theorem}\phantomsection
	\label{thm:decomposition}
	\begin{enumerate}[(i)]
		\item For $n\geq k$, as a $(\CC R_n,\CC I_k)$-bimodule we have $V^{\otimes k}\cong \bigoplus_{\lambda\in \widehat{I}_k}V^{\lambda}_n\otimes I^{\lambda}_k$.
\item For $n\geq k+1$, as a $(\CC R_{n-1},\CC I_{k+\frac{1}{2}})$-bimodule we have
		$V^{\otimes k}\cong \bigoplus_{\mu\in\widehat{I}_{k+\frac{1}{2}}} V^{\mu}_{n-1}\otimes I^{\mu}_{k+\frac{1}{2}}$.
\end{enumerate}
\end{theorem}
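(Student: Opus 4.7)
The plan is to apply the classical double centralizer theorem (\cite[Theorem 5.4]{HR05} as cited) to the Schur--Weyl setup already established, once we identify which irreducible constituents actually appear on each side. Since $\CC R_n$ is semisimple, the $\CC R_n$-module $V^{\otimes k}$ splits as $\bigoplus_{\lambda} V^{\lambda}_n \otimes M^{\lambda}$, where $M^{\lambda}=\Hom_{\CC R_n}(V^{\lambda}_n,V^{\otimes k})$ is the multiplicity space, and each $M^{\lambda}$ is naturally a right module over $\End_{\CC R_n}(V^{\otimes k})$. Theorem~\ref{thm:swd_rook}(i) provides, for $n\geq k$, the algebra isomorphism $\CC I_k\cong \End_{\CC R_n}(V^{\otimes k})$, so $M^{\lambda}$ is a $\CC I_k$-module; the centralizer theorem then says that each nonzero $M^{\lambda}$ is irreducible as a $\CC I_k$-module and that distinct $\lambda$ yield non-isomorphic $M^{\lambda}$.

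Next I would pin down the set of $\lambda$ with $M^{\lambda}\neq 0$. The Kronecker formula in Corollary~\ref{prop:brat_rook}, iterated $k$ times starting from $\CC^n\cong V^{(1)}_n$, shows that the $\CC R_n$-constituents of $V^{\otimes k}$ are precisely the vertices of the graph $\widehat{R}(n)$ reachable from $(1)$ in exactly $k-1$ steps. A short induction on $k$ (using that from any nonempty $\lambda\in\Lambda_{\leq k}$ one can move to any nonempty $\mu\in\lambda^{-+}\cup\lambda^{+,n}$ and thereby reach every element of $\widehat{I}_{k+1}$) confirms that these constituents are exactly $\{V^{\lambda}_n\mid \lambda\in \widehat{I}_k\}$, matching the parametrization in Theorem~\ref{thm:indexing_set}. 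Hence $M^{\lambda}\cong I^{\lambda}_k$ for every $\lambda\in \widehat{I}_k$, proving (i).

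For part (ii) the argument runs in parallel, with $V^{\otimes(k+\frac{1}{2})}=V^{\otimes k}\otimes e_n$ in place of $V^{\otimes k}$ and with Corollary~\ref{thm:swdfrook_half}(i) supplying, for $n\geq k+1$, the isomorphism $\CC I_{k+\frac{1}{2}}\cong \End_{\CC R_{n-1}}(V^{\otimes(k+\frac{1}{2})})$. The $\CC R_{n-1}$-constituents of $V^{\otimes(k+\frac{1}{2})}$ are obtained by applying the restriction rule of Proposition~\ref{prop:res} to the constituents $\{V^{\lambda}_n\mid \lambda\in\widehat{I}_k\}$ determined in part (i); since $\lambda^{-,=}\subseteq \Lambda_{\leq k}$ and (because $\emptyset\in (1)^{-,=}$) every element of $\Lambda_{\leq k}=\widehat{I}_{k+\frac{1}{2}}$ is reached, we get that the multiplicity space of $V^{\mu}_{n-1}$ is nonzero exactly for $\mu\in\widehat{I}_{k+\frac{1}{2}}$. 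The centralizer theorem then identifies each such multiplicity space with the irreducible $\CC I_{k+\frac{1}{2}}$-module $I^{\mu}_{k+\frac{1}{2}}$, giving (ii).

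The main obstacle is not the abstract bimodule decomposition, which is immediate from the centralizer theorem once the algebras are known to be mutual centralizers, but rather the verification that the indexing sets $\widehat{I}_k$ and $\widehat{I}_{k+\frac{1}{2}}$ arise as the exact collection of $\CC R_n$- (resp.\ $\CC R_{n-1}$-) constituents with nonzero multiplicity. For this I would simply formalize the reachability argument in the graph $\widehat{R}(n)$ in part (i), and then in part (ii) combine it with the extra step $\nu\in\lambda^{-,=}$ coming from restriction, taking care that the $\emptyset$ vertex (absent from $\widehat{I}_k$ but present in $\widehat{I}_{k+\frac{1}{2}}$) is picked up precisely via the equality case $\nu=\emptyset\in (1)^{-,=}$.
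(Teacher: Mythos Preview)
Your proposal is correct and follows essentially the same approach as the paper: both apply the centralizer theorem to the Schur--Weyl setups of Theorem~\ref{thm:swd_rook} and Corollary~\ref{thm:swdfrook_half}, with the indexing sets supplied by Theorem~\ref{thm:indexing_set}. The only cosmetic difference is that you re-derive the description of the constituents inline (via Corollary~\ref{prop:brat_rook} and Proposition~\ref{prop:res}), whereas the paper simply cites Theorem~\ref{thm:indexing_set}, whose proof already carried out that verification.
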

\begin{proof}
	The proof of the first part (respectively, the second part) is an application of Theorem~\ref{thm:swd_rook} (respectively, Corollary~\ref{thm:swdfrook_half}), Theorem~\ref{thm:indexing_set}, and the centralizer theorem.	
\end{proof}

{\bf{A tower, branching rule and the Bratteli diagram}}.
	For $k\in\ZZ_{>0}$, define the following embedding
	\begin{align}\label{eq:embedding}
	\eta_k:\CC I_{k}\to\CC I_{k+\frac{1}{2}},\quad \eta_{k}(x_{d})=x_{d'}, 
	\end{align}
	where, given $d\in I_k$, the element $d'\in I_{k+\frac{1}{2}}$ is obtained from $d$ by adding the block $\{(k+1),(k+1)'\}$. Corollary~\ref{coro:obprod} implies that $\eta_k$ is  an
	 algebra homomorphism. Using~\eqref{eq:embedding}, we have the following tower of totally propagating partition algebras:
\begin{equation}\label{eq:tower}
\CC I_{\frac{1}{2}}= \CC I_{1}\subset \CC I_{\frac{3}{2}}\subset \CC I_{2}\subset\cdots.
\end{equation}

 We state the following theorem from \cite[Theorem 5.9]{Ram90} in order to describe the branching rule for the embedding $\CC I_k\subset \CC I_{k+\frac{1}{2}}$ in Theorem~\ref{thm:item2}(i). The branching rule for $\CC I_{k+\frac{1}{2}}\subset \CC I_{k+1}$ is given in Theorem~\ref{thm:item2}(ii). 
\begin{theorem}\label{thm:ram}
	Let $A$ be a subalgebra of an algebra $B$ and let $M$ be a finite dimensional repesentation of $B$, which is a semisimple representation of both $B$ and $A$. Let $W$ and $V$ be representations of $B$ and $A$, respectively, both being subrepresentations of $M$. Then, the multiplicity of the representation $\Hom_{B}(M,W)$ of $\End_{B}(M)$ in the restriction of the representation $\Hom_{A}(M,V)$ of $\End_{A}(M)$ to $\End_{B}(M)$ is equal to the multiplicity of $V$ in the restriction of $W$ from $B$ to $A$.
\end{theorem}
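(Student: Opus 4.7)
The plan is to exploit the double-centralizer structure afforded by the semisimplicity of $M$ as both an $A$- and a $B$-module. First, I would decompose $M$ in its canonical bimodule form $M \cong \bigoplus_{\lambda} U^B_\lambda \otimes H^B_\lambda$ as a $(B,\End_B(M))$-bimodule, where the sum runs over the distinct irreducible $B$-constituents $U^B_\lambda$ of $M$ and $H^B_\lambda := \Hom_B(U^B_\lambda, M)$ are the corresponding irreducible $\End_B(M)$-modules that parametrize the simple summands appearing in $M$. Analogously, $M \cong \bigoplus_{\mu} U^A_\mu \otimes H^A_\mu$ as an $(A,\End_A(M))$-bimodule, with $H^A_\mu := \Hom_A(U^A_\mu, M)$. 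Because $A \subset B$ gives the natural inclusion $\End_B(M) \subset \End_A(M)$, both bimodule decompositions can be read as $(A, \End_B(M))$-bimodule decompositions of the same space $M$.

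Since both sides of the claimed equality are additive in $W$ and in $V$, I would reduce to the case where $W = U^B_\lambda$ and $V = U^A_\mu$ are irreducible; under this reduction $\Hom_B(M, W)$ is (up to the appropriate duality convention) the irreducible $\End_B(M)$-module associated with $H^B_\lambda$, and similarly for $\Hom_A(M, V)$. The crux of the argument is then to compare the two $(A, \End_B(M))$-bimodule structures on $M$: expanding $\Res^B_A U^B_\lambda = \bigoplus_\mu (U^A_\mu)^{\oplus c_{\lambda\mu}}$ with $c_{\lambda\mu} := [\Res^B_A U^B_\lambda : U^A_\mu]$, the $B$-decomposition of $M$ restricted to $A$ reads
\begin{equation*}
M \;\cong\; \bigoplus_{\lambda} (\Res^B_A U^B_\lambda) \otimes H^B_\lambda \;\cong\; \bigoplus_{\mu} U^A_\mu \otimes \Big( \bigoplus_{\lambda} (H^B_\lambda)^{\oplus c_{\lambda\mu}} \Big).
\end{equation*}
Matching isotypic components of this expression with the canonical $(A, \End_B(M))$-form $M \cong \bigoplus_\mu U^A_\mu \otimes H^A_\mu$ and invoking uniqueness of isotypic decomposition for the semisimple $A$-module $M$, I would deduce the $\End_B(M)$-module isomorphism
\begin{equation*}
\Res^{\End_A(M)}_{\End_B(M)} H^A_\mu \;\cong\; \bigoplus_{\lambda} (H^B_\lambda)^{\oplus c_{\lambda\mu}},
\end{equation*}
which reads off the multiplicity $c_{\lambda\mu}$ on the centralizer side and is exactly the conclusion of the theorem.

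The main obstacle I anticipate is bookkeeping: making the identification $\Hom_B(M, U^B_\lambda) \leftrightarrow H^B_\lambda$ precise (and analogously on the $A$-side) while keeping track of whether these spaces are most naturally left or right modules over the centralizers, and checking that these identifications are genuinely compatible with the inclusion $\End_B(M) \subset \End_A(M)$. Once these conventions are pinned down, the conclusion is essentially forced by uniqueness of isotypic decompositions and requires no further computation.
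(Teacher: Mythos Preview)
Your approach is correct and is the standard double-centralizer argument for this type of result. Note, however, that the paper does not supply its own proof of this theorem: it is stated with the attribution ``We state the following theorem from \cite[Theorem~5.9]{Ram90}'' and is used as a black box to derive the branching rule in Theorem~\ref{thm:item2}(i). So there is no proof in the paper to compare your proposal against; your argument would serve as a self-contained replacement for the citation.
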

\begin{theorem}[Branching rule] 
	\phantomsection
		\label{thm:item2}
		\begin{enumerate}[(i)]
	\item	For $\mu\in \widehat{I}_{k+\frac{1}{2}}$,
		\begin{align*}
	\Res_{\CC I_k}^{\CC I_{k+\frac{1}{2}}}I_{k+\frac{1}{2}}^{\mu}\cong\begin{cases}
I_{k}^{(1)} & \text{if } \mu=\emptyset,\\
 \bigoplus_{\nu\in\mu^{+,=}} I_{k}^{\nu} & \text{if }\mu\neq\emptyset.
	\end{cases}
		\end{align*}
\item  For $\lambda\in \widehat{I}_{k+1}$, we have
	$\Res_{\CC I_{k+\frac{1}{2}}}^{\CC I_{k+1}} I_{k+1}^{\lambda}\cong\bigoplus_{\mu\in\lambda^{-}} I^{\mu}_{k+\frac{1}{2}}$.
\end{enumerate}
\end{theorem}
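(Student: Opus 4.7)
I will translate both branching rules into restriction statements between $\CC R_n$ and $\CC R_{n-1}$ via the Schur--Weyl dualities of Theorem~\ref{thm:swd_rook} and Corollary~\ref{thm:swdfrook_half}. Fix $n \geq k+1$, so that $\CC I_k \cong \End_{\CC R_n}(V^{\otimes k})$ and $\CC I_{k+\frac{1}{2}} \cong \End_{\CC R_{n-1}}(V^{\otimes (k+\frac{1}{2})})$, and identify $V^{\otimes (k+\frac{1}{2})} = V^{\otimes k}\otimes e_n$ with $V^{\otimes k}$ as a $\CC R_{n-1}$-module. A direct inspection of~\eqref{al:coeff} on elements of $I_k$, $I_{k+\frac{1}{2}}$, and $I_{k+1}$ confirms that under this identification the embedding $\eta_k$ of~\eqref{eq:embedding} is the inclusion $\End_{\CC R_n}(V^{\otimes k}) \subset \End_{\CC R_{n-1}}(V^{\otimes k})$, and that the inclusion $\CC I_{k+\frac{1}{2}}\subset\CC I_{k+1}$ intertwines the $\CC I_{k+\frac{1}{2}}$-action on the invariant subspace $V^{\otimes k}\otimes e_n \subset V^{\otimes (k+1)}$ with its Schur--Weyl action on $V^{\otimes (k+\frac{1}{2})}$.

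For part (i), I apply Theorem~\ref{thm:ram} with $M = V^{\otimes k}$, $A = \CC R_{n-1}$, $B = \CC R_n$, $W = V^{\nu}_n$ for $\nu\in\widehat{I}_k$, and $V = V^{\mu}_{n-1}$ for $\mu\in\widehat{I}_{k+\frac{1}{2}}$. Theorem~\ref{thm:decomposition} identifies $\Hom_B(M,W) \cong I^{\nu}_k$ and $\Hom_A(M,V) \cong I^{\mu}_{k+\frac{1}{2}}$, so Theorem~\ref{thm:ram} equates the multiplicity of $I^{\nu}_k$ in $\Res^{\CC I_{k+\frac{1}{2}}}_{\CC I_k} I^{\mu}_{k+\frac{1}{2}}$ with the multiplicity of $V^{\mu}_{n-1}$ in $\Res^{\CC R_n}_{\CC R_{n-1}} V^{\nu}_n$, which by Proposition~\ref{prop:res} is $1$ if $\nu \in \mu^{+,=}$ and $0$ otherwise. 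The case-split on $\mu = \emptyset$ in the statement merely records that the only $\nu \in \emptyset^{+,=}$ belonging to $\widehat{I}_k = \Lambda_{\leq k}\setminus\{\emptyset\}$ is $\nu = (1)$.

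For part (ii), the plan is to combine the tensor identity $V^{\otimes (k+1)} \cong \widehat{\Ind}^{\CC R_n}_{\CC R_{n-1}}(\Res^{\CC R_n}_{\CC R_{n-1}} V^{\otimes k})$ from Proposition~\ref{prop:tensor_identity} with Proposition~\ref{prop:induct_restrict} and its dual adjunction $\Hom_{\CC R_n}(V,\widehat{\Ind}W) \cong \Hom_{\CC R_{n-1}}(\widehat{\Res}V,W)$; the latter follows by the same check on irreducibles because $\lambda \in \mu^{+}$ iff $\mu \in \lambda^{-}$. Applying $\Hom_{\CC R_n}(V^{\lambda}_n,-)$ to the tensor identity and using this biadjointness yields
\begin{equation*}
I^{\lambda}_{k+1} \cong \Hom_{\CC R_n}\bigl(V^{\lambda}_n,\widehat{\Ind}(\Res V^{\otimes k})\bigr) \cong \Hom_{\CC R_{n-1}}\bigl(\widehat{\Res}V^{\lambda}_n,\Res V^{\otimes k}\bigr) \cong \bigoplus_{\mu\in\lambda^{-}} I^{\mu}_{k+\frac{1}{2}}
\end{equation*}
as vector spaces, using $\widehat{\Res}V^{\lambda}_n = \bigoplus_{\mu\in\lambda^{-}}V^{\mu}_{n-1}$ and Theorem~\ref{thm:decomposition}(ii) in the last step. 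The main obstacle is to upgrade this chain to a $\CC I_{k+\frac{1}{2}}$-equivariant isomorphism, where $\CC I_{k+\frac{1}{2}}$ acts on $I^{\lambda}_{k+1}$ via the inclusion into $\CC I_{k+1}$. I would handle this by promoting Proposition~\ref{prop:tensor_identity} to a $(\CC R_n,\CC I_{k+\frac{1}{2}})$-bimodule isomorphism: $\widehat{\Ind}(\Res V^{\otimes k})$ inherits a $\CC I_{k+\frac{1}{2}}$-action by applying the additive functor $\widehat{\Ind}$ to the $(\CC R_{n-1},\CC I_{k+\frac{1}{2}})$-bimodule $\Res V^{\otimes k}$, and the resulting two $\CC R_n$-linear $\CC I_{k+\frac{1}{2}}$-actions on $V^{\otimes (k+1)}$ agree because $V^{\otimes k}\otimes e_n$ generates $V^{\otimes (k+1)}$ as a $\CC R_n$-module (by acting with the transpositions $(j,n)\in S_n\subset R_n$) and both actions restrict to the common Schur--Weyl action on this generator by the first paragraph; this forces equality on all of $V^{\otimes (k+1)}$.
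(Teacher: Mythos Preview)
Your proof is correct and follows essentially the same route as the paper: part~(i) via Theorem~\ref{thm:ram} and Proposition~\ref{prop:res}, and part~(ii) via the tensor identity (Proposition~\ref{prop:tensor_identity}) combined with the Frobenius reciprocity of Proposition~\ref{prop:induct_restrict}. Your treatment is in fact slightly more careful than the paper's in that you explicitly argue the $\CC I_{k+\frac{1}{2}}$-equivariance of the chain of isomorphisms in part~(ii), which the paper leaves implicit.
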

\begin{proof}
\begin{enumerate}[(i)]
\item The result follows from Theroem~\ref{thm:decomposition} and Theorem~\ref{thm:ram}.
	
\item  Choose $n\in\ZZ_{> 0}$ such that $n\geq k+1$. Then,
\begin{align*}
&\Res^{\CC I_{k+1}}_{\CC I_{k+\frac{1}{2}}}I^{\lambda}_{k+1}\cong\Res^{\CC I_{k+1}}_{\CC I_{k+\frac{1}{2}}}\Hom_{\CC R_n}(V^{\otimes (k+1)},V^{\lambda}_n), \text{ by the part $(i)$ of Theorem~\ref{thm:decomposition}}\\
&\cong \Res^{\CC I_{k+1}}_{\CC I_{k+\frac{1}{2}}} \Hom_{\CC R_n}((\widehat{\Ind}^{\CC R_{n}}_{\CC R_{n-1}}\Res^{\CC R_{n}}_{\CC R_{n-1}} V^{\emptyset}_n)^{k+1},V^{\lambda}_n),\text{ by Proposition~\ref{prop:ind_res_tensor}}\\
&\cong \Res^{\CC I_{k+1}}_{\CC I_{k+\frac{1}{2}}} \Hom_{\CC R_{n-1}}((\widehat{\Ind}^{\CC R_{n}}_{\CC R_{n-1}}\Res^{\CC R_{n}}_{\CC R_{n-1}} V_{\emptyset})^{k},\widehat{\Res}^{\CC R_n}_{\CC R_{n-1}} V^{\lambda}_n), \text{ by Proposition~\ref{prop:induct_restrict}}\\
&\cong \Hom_{\CC R_{n-1}}(V^{\otimes k},\bigoplus_{\mu\in \lambda^{-}} V^{\mu}_{n-1}), \text{ by definition of $\widehat{\Res}^{\CC R_n}_{\CC R_{n-1}}(-)$}\\
&\cong\bigoplus_{\mu\in \lambda^{-}}\Hom_{\CC R_{n-1}}(V^{\otimes k},V^{\mu}_{n-1})
\cong\bigoplus_{\mu\in \lambda^{-}} I^{\mu}_{k+\frac{1}{2}}, \text{ by the part $(ii)$ of Theorem~\ref{thm:decomposition}}.     \tag*{\qedhere} 
\end{align*}
\end{enumerate}
\end{proof}

Using Thereom~\ref{thm:item2}, we get the Bratteli diagram $\widehat{I}$ for the tower~\eqref{eq:tower} in which the sets of vertices at level $k$ and at level $k+\frac{1}{2}$ are $\widehat{I}_{k}=\Lambda_{\leq k}\setminus\{\emptyset\}$ and  $\widehat{I}_{k+\frac{1}{2}}=\Lambda_{\leq k}$, respectively. For $\mu\in\widehat{I}_{k}$ and $\nu\in \widehat{I}_{k+\frac{1}{2}}$, there is an edge between 
 $\mu$ and $\nu$ if and only if $\nu=\mu$ or $\nu\in\mu^-$.
 For $\nu\in \widehat{I}_{k+\frac{1}{2}}$ and $\lambda\in \widehat{I}_{k+1}$, there is an edge between $\nu$ and $\lambda$ if and only if $\lambda\in\nu^+$.

\begin{example} 
The Bratteli diagram for the tower~\eqref{eq:tower} up to level $3$ is illustrated in Figure \ref{fig:BDTPPA}.
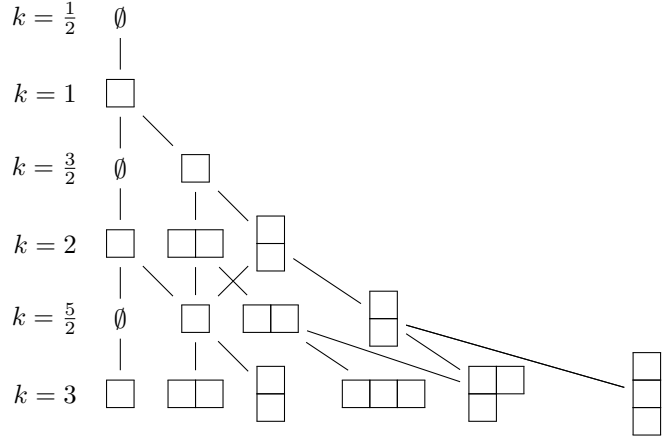
\begin{figure}[H]\centering
	\begin{tikzpicture}[scale=.5]
	\node (a) at (-2,0) {$k=\frac{1}{2}$};
	\node (b) at (-2,-2) {$k=1$};
	\node (c) at (-2,-4) {$k=\frac{3}{2}$};
	\node (d) at (-2,-6) {$k=2$};
	\node (e) at (-2,-8) {$k=\frac{5}{2}$};
	\node (f) at (-2,-10) {$k=3$};
	\node (1) at (0,0) {$\emptyset$};
	\node (2) at (0,-2) {$\ytableausetup{boxsize=1em} \begin{ytableau}
                                                        \empty \\
                                                       \end{ytableau}$};
	\node (3) at (0,-4) {$\emptyset$};
	\node (4) at (2,-4) {$\ytableausetup{boxsize=1em} \begin{ytableau}
                                                        \empty \\
                                                       \end{ytableau}$};
	\node (5) at (0,-6) {$\ytableausetup{boxsize=1em} \begin{ytableau}
                                                        \empty \\
                                                       \end{ytableau}$};
	\node (6) at (2,-6) {$\ytableausetup{boxsize=1em} \begin{ytableau}
                                                        \empty & \empty \\
                                                       \end{ytableau}$};
	\node (7) at (4,-6) {$\ytableausetup{boxsize=1em} \begin{ytableau}
                                                        \empty \\
                                                        \empty \\
                                                       \end{ytableau}$};
	\node (8) at (0,-8) {$\emptyset$};
	\node (9) at (2,-8) {$\ytableausetup{boxsize=1em} \begin{ytableau}
                                                        \empty \\
                                                       \end{ytableau}$};
	\node (10) at (4,-8) {$\ytableausetup{boxsize=1em} \begin{ytableau}
                                                        \empty & \empty \\
                                                       \end{ytableau}$};
	\node (11) at (7,-8) {$\ytableausetup{boxsize=1em} \begin{ytableau}
                                                        \empty \\
                                                        \empty \\
                                                       \end{ytableau}$};
	\node (12) at (0,-10) {$\ytableausetup{boxsize=1em} \begin{ytableau}
                                                        \empty \\
                                                       \end{ytableau}$};
	\node (13) at (2,-10) {$\ytableausetup{boxsize=1em} \begin{ytableau}
                                                        \empty & \empty\\
                                                       \end{ytableau}$};
	\node (14) at (4,-10) {$\ytableausetup{boxsize=1em} \begin{ytableau}
                                                        \empty \\
                                                        \empty \\
                                                       \end{ytableau}$};
	\node (15) at (7,-10) {$\ytableausetup{boxsize=1em} \begin{ytableau}
                                                        \empty & \empty & \empty \\
                                                       \end{ytableau}$};
	\node (16) at (10,-10) {$\ytableausetup{boxsize=1em} \begin{ytableau}
                                                        \empty & \empty \\
                                                        \empty \\
                                                       \end{ytableau}$};
	\node (17) at (14,-10) {$\ytableausetup{boxsize=1em} \begin{ytableau}
                                                        \empty \\
                                                        \empty \\
                                                        \empty \\
                                                       \end{ytableau}$};
	
	\draw (1) -- (2) -- (3) -- (5) -- (8) -- (12); 
	\draw (2) -- (4) -- (6) -- (9) -- (13);
	\draw (4) -- (7) -- (11) -- (17);
	\draw (5) -- (9);
	\draw (6) -- (10) -- (15);
	\draw (7) -- (9) -- (14);
	\draw (10) -- (16);
	\draw (11) -- (16);
	\draw (11) -- (17);
	\end{tikzpicture}
	\caption{Bratteli diagram, up to level 3, for the tower of totally propagating partition algebras}
	\label{fig:BDTPPA}
\end{figure}
\end{example}

\begin{corollary} For $t\in \frac{1}{2}\ZZ_{> 0}$, the dimension of the irreducible representation $I^{\lambda}_t$ of $\CC I_{t}$ is the number of paths (see Convention~\ref{con} for the definition of a path)
	from $\emptyset\in \widehat{I}_{\frac{1}{2}}$ to
	$\lambda\in \widehat{I}_t$ in the Bratteli diagram $\widehat{I}$.
\end{corollary}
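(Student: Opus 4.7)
The plan is to run the standard induction on the level $t \in \frac{1}{2}\ZZ_{>0}$, which is available because the Bratteli diagram $\widehat{I}$ for the tower~\eqref{eq:tower} is simple (the branching in Theorem~\ref{thm:item2} is multiplicity free) and all totally propagating partition algebras are semisimple.

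For the base case $t = \frac{1}{2}$, I would note that $\CC I_{\frac{1}{2}} = \CC I_{1}$ is spanned by the unique totally propagating diagram on $\{1, 1'\}$ and hence is isomorphic to $\CC$; its only irreducible representation $I^{\emptyset}_{\frac{1}{2}}$ therefore has dimension~$1$, which matches the single trivial path from $\emptyset$ to itself at level $\frac{1}{2}$. For the inductive step, assume the claim at level $t - \frac{1}{2}$, and fix $\lambda \in \widehat{I}_t$. Theorem~\ref{thm:item2} gives a multiplicity-free decomposition
\[
\Res^{\CC I_t}_{\CC I_{t-\frac{1}{2}}} I^{\lambda}_t \;\cong\; \bigoplus_{\mu} I^{\mu}_{t-\frac{1}{2}},
\]
where $\mu$ ranges exactly over the vertices at level $t - \frac{1}{2}$ that are adjacent to $\lambda$ in $\widehat{I}$; this is precisely how the edges of $\widehat{I}$ are defined in the paragraph immediately following Theorem~\ref{thm:item2}. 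Taking dimensions and applying the inductive hypothesis,
\[
\dim I^{\lambda}_t \;=\; \sum_{\mu} \dim I^{\mu}_{t-\frac{1}{2}} \;=\; \sum_{\mu} \#\{\text{paths from } \emptyset \text{ to } \mu \text{ in } \widehat{I}\}.
\]
Since every path from $\emptyset$ to $\lambda$ is uniquely obtained by extending a path from $\emptyset$ to some neighbor $\mu$ of $\lambda$ by the single edge $\{\mu, \lambda\}$, this sum counts precisely the paths from $\emptyset$ to $\lambda$, closing the induction.

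The one minor bookkeeping point is the edge $\emptyset\text{--}(1)$ between levels $\frac{1}{2}$ and $1$: Theorem~\ref{thm:item2} is stated for $k \in \ZZ_{>0}$, so it does not literally handle this step. However, because $\CC I_{\frac{1}{2}} = \CC I_1$ as algebras, the restriction is the identity, and the relabeling $\emptyset \leftrightarrow (1)$ of the unique irreducible representation is exactly what the single edge in $\widehat{I}$ records; both dimensions are $1$ and the first inductive step is immediate. Beyond this, the argument has no genuine obstacle, as every structural ingredient (semisimplicity, multiplicity-freeness of branching, and the correspondence between edges of $\widehat{I}$ and branching summands) is already supplied by earlier results.
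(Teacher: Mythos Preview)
Your proof is correct and is precisely the standard induction that the paper is implicitly invoking: in the paper this corollary is stated without proof, as an immediate consequence of the multiplicity-free branching in Theorem~\ref{thm:item2}. Your careful handling of the base step $\CC I_{\frac{1}{2}} = \CC I_{1}$ is appropriate, and nothing further is needed.
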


\section{Jucys--Murphy elements of totally propagating partition algebras}
\label{sec:JM}
We begin this section with some definitions and notation  which are needed to define Jucys--Murphy elements of $\CC I_t$, for $t\in\frac{1}{2}\ZZ_{> 0}$. 

 Let $k\in\ZZ_{> 0}$. Suppose that $\mathcal{U}_{k}$ denotes the collection of the set partitions of $\{1,2,\ldots,k\}$. For $\mathcal{P}=\{B_1,\ldots,B_l\}\in \mathcal{U}_k$, let $d_{\mathcal{P}}\in I_{k}$ be the partition diagram corresponding to the set partition
$\{B_1\cup B_1',\ldots,B_l\cup B_l' \}$; for any subset $S$ of $\{1,2,\ldots,k\}$, let $S'$ denote the set $\{i'\in\{1',2',\ldots,k'\}\mid i\in S\}$. For $k\geq 2$, suppose that $\mathcal{S}_{k}$ denotes the collection of the set partitions of $\{1,2,\ldots,k\}$ with at least two blocks.
 For $\mathcal{P}\in\mathcal{S}_k$ and distinct blocks $C,D\in \mathcal{P}$, let 
 $d_{\mathcal{P},\{C,D\}}\in I_k$ be the partition diagram whose blocks are 
 $(a)$ $B\cup B'$ for $B\in \mathcal{P}$ and $B\notin\{C,D\}$, and  $(b)$ $C\cup D'$, $D\cup C'$.

\begin{example}\label{ex:example1}
	For $k=5$, let $\mathcal{P}=\{\{1,3\},\{4\},\{2,5\}\}$ be a set partition of $\{1,2,3,4,5\}$. Then $d_{\mathcal{P},\{\{1,3\},\{2,5\}\}}\in I_{5}$, depicted in Figure~\ref{fig:db}, corresponds to $\{\{1,3,2',5'\},\{4,4'\},\{2,5,1',3'\}\}$. 
	\begin{figure}[H]\centering
		\begin{tikzpicture}
		[scale=0.8,
		mycirc/.style={circle,fill=black, minimum size=0.1mm, inner sep = 1.5pt}]
		\node (1) at (-1.9,0.5) {$d_{\mathcal{P},\{\{1,3\},\{2,5\}\}} =$};
		\node[mycirc,label=above:{$1$}] (n1) at (0,1) {};
		\node[mycirc,label=above:{$2$}] (n2) at (1,1) {};
		\node[mycirc,label=above:{$3$}] (n3) at (2,1) {};
		\node[mycirc,label=above:{$4$}] (n4) at (3,1) {};
    	\node[mycirc,label=above:{$5$}] (n5) at (4,1) {};
		\node[mycirc,label=below:{$1'$}] (n1') at (0,0) {};
		\node[mycirc,label=below:{$2'$}] (n2') at (1,0) {};
		\node[mycirc,label=below:{$3'$}] (n3') at (2,0) {}; 
		\node[mycirc,label=below:{$4'$}] (n4') at (3,0) {}; 
		\node[mycirc,label=below:{$5'$}] (n5') at (4,0) {};
		
		\draw (n1)..controls(1,0.5)..(n3);
		\draw (n2)..controls(2.5,0.5)..(n5);
		\draw (n4)--(n4');
		\draw (n3)--(n2');
		\draw (n3')--(n5);
		\draw (n1')..controls(1,0.5)..(n3');
		\draw (n2')..controls(2.5,0.5)..(n5');
		\end{tikzpicture}
		\caption{An example of $d_{\mathcal{P},\{C,D\}}$}
		\label{fig:db}
		\end{figure}
\end{example}

  Let $|F|$ denote the cardinality of a finite set $F$.  Define the following elements in $\CC I_{k}$:
\begin{align}
Z_{k}&:=\sum_{\mathcal{P}\in \mathcal{U}_k}|\mathcal{P}|x_{d_{\mathcal{P}}},\label{eq:JM_Z_k}\\
\tilde{Z}_1:=0 \text { and for $k\geq 2$}, \tilde{Z}_k&:=\sum_{\mathcal{P}\in\mathcal{S}_{k}}\bigg(\sum_{\substack{C,D\in{\mathcal{P}},\\C\neq D}}x_{d_{\mathcal{P},\{C,D\}}}\bigg).\label{eq:tildeJM_Z_k}
\end{align}

 For $k\geq 2$, let $\mathcal{S}_{k+\frac{1}{2}}(\subset \mathcal{S}_{k+1} )$ be the collection of the set partitions of $\{1,2,\ldots,k+1\}$ with at least three blocks. Throughout this section, for $\mathcal{P}\in\mathcal{S}_{k+1}$, let $B^{\mathcal{P}}\in\mathcal{P}$ denote the block containing $k+1$. For the distinct blocks $C\neq B^{\mathcal{P}}$ and $D\neq B^{\mathcal{P}}$ in $\mathcal{P}\in S_{k+\frac{1}{2}}$, define $\tilde{d}_{\mathcal{P},\{C,D\}}\in I_{k+\frac{1}{2}}$ whose blocks are $(a)$ $B\cup B'$, for $B\in \mathcal{P}$ and $B\notin\{C,D\}$ and $(b)$ $C\cup D'$, $D\cup C'$.

 \begin{example}
 For $k=4$, let $\mathcal{P}=\{\{1,3\},\{4\},\{2,5\}\}$ be a set partition of $\{1,2,3,4,5\}$. Here $B^{\mathcal{P}}=\{2,5\}$. Then  $\tilde{d}_{\mathcal{P},\{\{1,3\},\{4\}\}}\in I_{4+\frac{1}{2}}$, depicted in Figure~\ref{fig:dbt}, corresponds to  $\{\{1,3,4'\},\{4,1',3'\},\{2,5,2',5'\}\}$.
 
 	\begin{figure}[ht]\centering
 	\begin{tikzpicture}
 	[scale=0.8,
 	mycirc/.style={circle,fill=black, minimum size=0.1mm, inner sep = 1.5pt}]
 	\node (1) at (-1.9,0.5) {$\tilde{d}_{\mathcal{P},\{\{1,3\},\{4\}\}} =$};
 	\node[mycirc,label=above:{$1$}] (n1) at (0,1) {};
 	\node[mycirc,label=above:{$2$}] (n2) at (1,1) {};
 	\node[mycirc,label=above:{$3$}] (n3) at (2,1) {};
 	\node[mycirc,label=above:{$4$}] (n4) at (3,1) {};
 	\node[mycirc,label=above:{$5$}] (n5) at (4,1) {};
 	\node[mycirc,label=below:{$1'$}] (n1') at (0,0) {};
 	\node[mycirc,label=below:{$2'$}] (n2') at (1,0) {};
 	\node[mycirc,label=below:{$3'$}] (n3') at (2,0) {}; 
 	\node[mycirc,label=below:{$4'$}] (n4') at (3,0) {}; 
 	\node[mycirc,label=below:{$5'$}] (n5') at (4,0) {};
 	
 	\draw (n1)..controls(1,0.5)..(n3);
 	\draw (n3)--(n4');
 	\draw (n2)..controls(2.5,0.5)..(n5);
 	\draw (n3')--(n4);
 	\draw (n5')--(n5);
 	\draw (n1')..controls(1,0.5)..(n3');
 	\draw (n2')..controls(2.5,0.5)..(n5');
 \end{tikzpicture} 
 \caption{An example of $\tilde{d}_{\mathcal{P},\{C,D\}}$}
 \label{fig:dbt}
 \end{figure}
 
 \end{example}

Let $Z_{\frac{1}{2}}=1$ and $\tilde{Z}_{\frac{1}{2}}=0$. For $k\geq 1$, we define the following elements in $\CC I_{k+\frac{1}{2}}$:
\begin{align}
 &Z_{k+\frac{1}{2}}:= \sum_{\mathcal{P}\in \mathcal{S}_{k+1}}(|\mathcal{P}|-1)x_{d_{\mathcal{P}}} = 
 Z_{k}+\sum_{\substack{\mathcal{P}\in\mathcal{S}_{k+1},\label{eq:JM_new_Z_k}\\|B^{\mathcal{P}}| >1}}(|\mathcal{P}|-1)x_{d_{\mathcal{P}}},\\
 &\tilde{Z}_{\frac{3}{2}}:=0, \text{ and for $k\geq 2$},\nonumber\\ 
&\tilde{Z}_{k+\frac{1}{2}}:= \sum_{\mathcal{P}\in \mathcal{S}_{k+\frac{1}{2}}}\bigg(\sum_{\substack{C\neq B^{\mathcal{P}},
D\neq B^{\mathcal{P}},\\ C\neq D}} x_{\tilde{d}_{\mathcal{P},\{C,D\}}}\bigg)
= \tilde{Z}_{k}+ \sum_{\mathcal{P}\in \mathcal{S}_{k+\frac{1}{2}}}\bigg(\sum_{\substack{|B^{\mathcal{P}}|>1,C\neq B^{\mathcal{P}},\\
		D\neq B^{\mathcal{P}}, C\neq D}} x_{\tilde{d}_{\mathcal{P},\{C,D\}}}\bigg).\label{eq:JM_new_Z_k_1}
\end{align}
In \eqref{eq:JM_new_Z_k} and \eqref{eq:JM_new_Z_k_1}, $Z_{k}$  and $\tilde{Z}_{k}$ are viewed in $\CC I_{k+\frac{1}{2}}$ using the embedding~\eqref{eq:embedding} $\eta_{k}:\CC I_{k}\to \CC I_{k+\frac{1}{2}}$.

  For $\lambda\in \Lambda_{\leq n}$, we write $b\in\lambda$ to mean that $b$ is a box in $\lambda$. For a vector space $W$, let ${\rm id}_{W}$ denote the identity operator on $W$. 
	  \begin{theorem}\phantomsection
	  	\label{thm:JMZ}
	  	\begin{enumerate}[(i)]
	  \item	If $\lambda\in \Lambda_{\leq n}$, then as operators on $V^{\lambda}_n$
	  	\begin{displaymath}
	  	\kappa_{n}=|\lambda|{\rm id}_{V^{\lambda}_n} \text{ and } \tilde{\kappa}_{n}=\sum_{b\in\lambda}\ct(b) {\rm id}_{V^{\lambda}_n}.
	  	\end{displaymath}
	  	
	  	\item As operators on $V^{\otimes k}$, we have $Z_{k}=\kappa_{n}, Z_{k+\frac{1}{2}}=\kappa_{n-1}$, and also as operators on 
	  	$V^{\otimes (k+\frac{1}{2})}$, we have $\tilde{Z}_{k}=\tilde{\kappa}_{n},\tilde{Z}_{k+\frac{1}{2}}=\tilde{\kappa}_{n-1}$.

	  \item	 
	  	\begin{enumerate}
	  		\item For $\lambda\in \widehat{I}_{k}$, we have 
	  		$Z_{k}=|\lambda|{\rm id}_{I^{\lambda}_{k}} \text{ and }$ 
	  		$ \tilde{Z}_{k}=\sum_{b\in\lambda}\ct(b) {\rm id}_{I^{\lambda}_{k}}$,  as operators on $I^{\lambda}_{k}$.
	  		\item For $\mu\in \widehat{I}_{k+\frac{1}{2}}$, we have 
	  		$Z_{k+\frac{1}{2}}=|\lambda|{\rm id}_{I^{\lambda}_{k+\frac{1}{2}}}  \text{ and }  \tilde{Z}_{k+\frac{1}{2}}=\sum_{b\in\lambda}\ct(b){\rm id}_{I^{\lambda}_{k+\frac{1}{2}}},$
	  		as operators on $I^{\lambda}_{k+\frac{1}{2}}$.
	 	\end{enumerate}
	  	\end{enumerate}
	  \end{theorem}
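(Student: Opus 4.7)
The plan is to prove the three parts in order: Part (i) is a direct consequence of Theorems~\ref{thm:action} and~\ref{thm:a}; Part (ii) is established by comparing the actions of both sides on the standard tensor basis via the orbit basis formula~\eqref{al:orbit}; and Part (iii) follows from Parts (i) and (ii) using the bimodule decomposition of Theorem~\ref{thm:decomposition}.

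For Part (i), since $\kappa_n$ and $\tilde{\kappa}_n$ are central in $\CC R_n$ by Theorem~\ref{thm:a}, each acts by a scalar on the irreducible $V^{\lambda}_n$. Evaluating on any Gelfand--Tsetlin vector $v_L$, $L\in\tau_n^{\lambda}$, and applying Theorem~\ref{thm:action}, I obtain $\kappa_nv_L=\sum_i X_iv_L=|\lambda|\,v_L$ (the nonzero contributions being exactly those $i\in L$) and $\tilde{\kappa}_nv_L=\sum_{i\in L}\ct(L(i))\,v_L=\sum_{b\in\lambda}\ct(b)\,v_L$, using the bijection $i\mapsto L(i)$ from the entries of $L$ to the boxes of $\lambda$.

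For Part (ii), the formula~\eqref{al:orbit} shows that on a basis vector $v=e_{j_1}\otimes\cdots\otimes e_{j_k}$ of $V^{\otimes k}$, the orbit basis element $x_{d_{\mathcal{P}}}$ returns $v$ when the equality pattern of $(j_1,\ldots,j_k)$ equals $\mathcal{P}$ and returns $0$ otherwise; hence $Z_kv=|\mathcal{P}(j_1,\ldots,j_k)|\,v=|\{j_1,\ldots,j_k\}|\,v$. On the other side, a short induction on $i$, starting from $X_1=1-P_1$ and using $X_i=s_{i-1}X_{i-1}s_{i-1}$, shows that $X_iv=v$ when $i\in\{j_1,\ldots,j_k\}$ and $X_iv=0$ otherwise, so $\kappa_nv=|\{j_1,\ldots,j_k\}|\,v$, giving $Z_k=\kappa_n$ on $V^{\otimes k}$. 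The identity $Z_{k+\frac{1}{2}}=\kappa_{n-1}$ on $V^{\otimes(k+\frac{1}{2})}=V^{\otimes k}\otimes e_n$ is the same computation applied to the extended index string $(j_1,\ldots,j_k,n)$: the coefficient $|\mathcal{P}|-1$ in the definition of $Z_{k+\frac{1}{2}}$ discards the block containing the last position, matching the omission of $X_n$ in passing from $\kappa_n$ to $\kappa_{n-1}$. For the tilde identities, the orbit basis element $x_{d_{\mathcal{P},\{C,D\}}}$ acts on $v$ as the tensor obtained by swapping the common values on blocks $C$ and $D$ when the equality pattern of $(j_1,\ldots,j_k)$ is $\mathcal{P}$, and as $0$ otherwise; I would match this against $\tilde{\kappa}_n$ by expanding
\begin{displaymath}
\tilde{X}_i=\sum_{r=1}^{i-1}\bigl((r,i)+(2,r)(1,i)P_2(1,i)(2,r)-(r,i)\gamma_r-\gamma_r(r,i)\bigr)
\end{displaymath}
(from the proof of Theorem~\ref{thm:a}), summing over $i$, and regrouping the terms according to the unordered pair of distinct values in $(j_1,\ldots,j_k)$ that is actually swapped; the diagonal corrections involving $\gamma_r$ and the $P_2$-conjugate term precisely enforce that only genuine block-swaps contribute.

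Part (iii) is immediate from Parts (i) and (ii) combined with the bimodule isomorphism $V^{\otimes k}\cong\bigoplus_{\lambda\in\widehat{I}_k}V^{\lambda}_n\otimes I^{\lambda}_k$ of Theorem~\ref{thm:decomposition}. On the summand $V^{\lambda}_n\otimes I^{\lambda}_k$, the element $Z_k\in\CC I_k$ acts as $\mathrm{id}_{V^{\lambda}_n}\otimes(\cdot)$, while by the two previous parts it also acts as $\kappa_n\otimes\mathrm{id}=|\lambda|\cdot\mathrm{id}$; comparing, $Z_k=|\lambda|\,\mathrm{id}_{I^{\lambda}_k}$. The other three identities follow in exactly the same way. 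I expect the main obstacle to be the combinatorial bookkeeping in the tilde case of Part (ii): tracking how the diagonal corrections in the expansion of $\tilde{X}_i$ precisely implement the constraint that a block-swap $x_{d_{\mathcal{P},\{C,D\}}}$ contributes only when both corresponding values actually occur among $(j_1,\ldots,j_k)$.
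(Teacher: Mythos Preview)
Your proposal is correct and follows essentially the same route as the paper: Part~(i) from Theorem~\ref{thm:action}, Part~(ii) by comparing both sides on a tensor basis vector via~\eqref{al:orbit}, and Part~(iii) from the bimodule decomposition. The only cosmetic difference is that for the $\tilde{Z}_k=\tilde{\kappa}_n$ computation the paper obtains the action of $\tilde{X}_i$ on $e_{j_1}\otimes\cdots\otimes e_{j_k}$ by induction on $i$ using the recursion~\eqref{eq:New_JM}, whereas you use the closed form from the proof of Theorem~\ref{thm:a}; both yield the same formula $\tilde{X}_i v=\sum_{r<i,\;r,i\in\{j_1,\ldots,j_k\}}(r,i)v$.
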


		    \begin{proof}
		    	\begin{enumerate}[(i)]
		    		\item This follows from the definitions of $\kappa_n, \tilde{\kappa}_n$ and Theorem~\ref{thm:action}.
		\item Let $v:=e_{j_{1}}\otimes e_{j_{2}}\otimes\cdots\otimes e_{j_{k}}\in V^{\otimes k}$. Let $\underline{j}$ denote the multiset $\{j_1,\ldots,j_k\}$. Then,
		\begin{align}\label{al:X}
		X_{i}v_= \begin{cases} 
	v &\text{ if } i\in\underline{j},\\
		0                  &\text{ otherwise}.  
		\end{cases}
		\end{align}

	For $1\leq i\leq n$, let $B_{i}=\{s\in\{1,2,\ldots,k\}\mid j_s=i\}$. Let $\{m_1,m_2,\ldots,m_l\}_{\underline{j}}$ be the set of all the distinct elements of the multiset $\underline{j}$. Then the element $e_{j_{1}}\otimes e_{j_{2}}\otimes\cdots\otimes e_{j_{k}}$ determines 
	a set partition $\mathcal{P}_0=\{B_{m_1}, B_{m_{2}},\ldots, B_{m_l}\}$ of $\{1,2,\ldots,k\}$.
		 Then using~\eqref{al:X}, we get
		\begin{equation*}
		\label{eq:kappan}
		\kappa_nv= (X_{m_{1}}+X_{m_{2}}+\cdots+X_{m_{l}})v=|\mathcal{P}_0|v.
		\end{equation*}
		
	Using~\eqref{al:orbit},	every term, except $|\mathcal{P}_0|x_{d_{\mathcal{P}_0}}$, in the expression~\eqref{eq:JM_Z_k} of $Z_k$ acts 
	 on $v$ as zero, and 
	 $|\mathcal{P}_0|x_{d_{\mathcal{P}_0}}v=|\mathcal{P}_0|v$. Therefore, $Z_k=\kappa_n$ as operators on $V^{\otimes k}.$

	 For $2\leq i\leq n$, let $\mathcal{M}_{i,\underline{j}}=\{1,2,\ldots,i-1\}\cap \{m_1,m_2,\ldots,m_l\}_{\underline{j}}$.
	From the definition of $\tilde{X}_2$, we compute,
		\begin{align*}
	\tilde{X}_{2}v=
	\begin{cases}
	0 & \mbox{if } 2\notin\underline{j},\\
	0 & \mbox{if } 2\in\underline{j}  \text{ and } \mathcal{M}_{i,\underline{j}}=\emptyset, \\ 
	(1,2)v & \mbox{if } 2\in\underline{j}\mbox{ and } \mathcal{M}_{i,\underline{j}}=\{1\}.
	\end{cases}
	\end{align*}
Now using $\tilde{X}_3=s_2\tilde{X}_2s_2+s_2-s_2\gamma_2-\gamma_2s_2+Q_3$ and the above action of $\tilde{X}_2$, we get
\begin{align*}
	\tilde{X}_{3}v=
	\begin{cases}
	0 & \mbox{if } 3\notin\underline{j},\\
	0 & \mbox{if } 3\in\underline{j}  \text{ and } \mathcal{M}_{i,\underline{j}}=\emptyset, \\ 
	(1,3)v & \mbox{if } 3\in\underline{j}\mbox{ and } \mathcal{M}_{i,\underline{j}}=\{1\},\\
	(2,3)v & \mbox{if } 3\in\underline{j}\mbox{ and } \mathcal{M}_{i,\underline{j}}=\{2\},\\
	((1,3)+(2,3))v & \mbox{if } 3\in\underline{j}\mbox{ and } \mathcal{M}_{i,\underline{j}}=\{1,2\}.\\
	\end{cases}
	\end{align*}
		 For arbitrary $2\leq i \leq n$, using the definition~\eqref{eq:New_JM} of $\tilde{X}_{i}$ and induction on $i$, we obtain: 
		\begin{align}\label{al:tildeX}
	\tilde{X}_{i}v=
		\begin{cases}
		0 & \mbox{if } i\notin\underline{j},\\
		0 & \mbox{if } i\in\underline{j}  \text{ and } \mathcal{M}_{i,\underline{j}}=\emptyset, \\ 
	    ((t_1,i)+\cdots+(t_s,i))v & \mbox{if } i\in\underline{j}\mbox{ and } \mathcal{M}_{i,\underline{j}}= \{t_1,\ldots,t_s\}.
		\end{cases}
		\end{align}
 Then, from~\eqref{al:tildeX}, we have
	\begin{align*}
	\tilde{\kappa}_nv=(\tilde{X}_{m_1}+\cdots+\tilde{X}_{m_l})v=
	\begin{cases}
	\underset{1\leq p<q\leq l}{\sum}(m_p,m_q)v & \mbox{ if }  l\geq 2,\\
	0 & \mbox{ otherwise}.
	\end{cases}
	\end{align*}	
 Using~\eqref{al:orbit} and the expression~\eqref{eq:tildeJM_Z_k} of $\tilde{Z}_k$, we observe that (i) when $l=1$, $\tilde{Z}_kv=0$ and (ii) for $l\geq 2$,
	\begin{align}\label{eq:compute_Z}
	\tilde{Z}_{k}v&= \sum_{\mathcal{P}\in\mathcal{S}_{k}}\sum_{\substack{C,D\in\mathcal{P},\\C\neq D}}x_{d_{\mathcal{P},\{C,D\}}}v  
	=\sum_{1\leq p< q\leq l} x_{d_{{\mathcal{P}_0},\{B_{m_p},B_{m_q}\}}}v= \sum_{1\leq p< q\leq l} (m_p,m_q)v .
	\end{align}
Hence, $\tilde{Z}_{k}=\tilde{\kappa}_n$ as operators on $V^{\otimes k}$.	

The vector $v\otimes e_n\in V^{\otimes (k+\frac{1}{2})}$ gives a multiset $\underline{j}\cup\{n\}=\{j_1,j_2,\ldots,j_k,j_{k+1}\}$, where $j_{k+1}=n$. For $1\leq i\leq n$, define $C_{i}=\{ s\in\{1,2,\ldots,k+1\}\mid j_s=i\}$. Note that $k+1\in C_n$.
 Let $\{n_1,\ldots,n_{r},n\}$ be the set of all the distinct elements of $\underline{j}\cup\{n\}$. 
Then the set partition $\mathcal{P}_1$, determined by $v\otimes e_n$, of $\{1,2,\ldots,k+1\}$ is $\{C_{n_1},\ldots, C_{n_r},C_n\}$.
 We have
 $\kappa_{n-1}(v\otimes e_n)=(|\mathcal{P}_1|-1)(v\otimes e_n)$ and \begin{align*}
 \tilde{\kappa}_{n-1}(v\otimes e_n)=\begin{cases}
 \underset{\substack{1\leq p<q\leq r}}{\sum}(n_p,n_q)(v\otimes e_n) & \text{ if } r\geq 2,\\
 0 & \text{ otherwise}. 
 \end{cases}
 \end{align*}

	Then the proof of $Z_{k+\frac{1}{2}}=\kappa_{n-1}$ and $\tilde{Z}_{k+\frac{1}{2}}=\tilde{\kappa}_{n-1}$, as operators on $V^{\otimes (k+\frac{1}{2})}$, can be  given analogously by using the expressions of $Z_{k+\frac{1}{2}}$ and $\tilde{Z}_{k+\frac{1}{2}}$ given in \eqref{eq:JM_new_Z_k} and \eqref{eq:JM_new_Z_k_1},  respectively , and by
	applying the similar arguments as in the above discussion.
	
		\item This part follows from parts $(i)$, $(ii)$, Theorem~\ref{thm:swd_rook} and Corolloary~\ref{thm:swdfrook_half}. \qedhere
	\end{enumerate}
\end{proof}

\begin{corollary}\label{coro:center}
	For $t\in \frac{1}{2}\ZZ_{> 0}$, the elements $Z_{t}$ and $\tilde{Z}_{t}$ are in the center of 
	$\CC I_{t}$.
\end{corollary}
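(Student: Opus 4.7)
The plan is to deduce Corollary~\ref{coro:center} directly from Theorem~\ref{thm:JMZ}(iii) together with the semisimplicity of $\CC I_t$ over $\CC$ noted in Section~\ref{sec:intro}. First I would invoke the Artin--Wedderburn decomposition
\[
\CC I_t \;\cong\; \bigoplus_{\lambda} \End_{\CC}(I^{\lambda}_t),
\]
where $\lambda$ ranges over the indexing set $\widehat{I}_t$ of irreducible representations of $\CC I_t$ given in Theorem~\ref{thm:indexing_set}. A standard consequence of this decomposition is that an element $z\in \CC I_t$ lies in the center of $\CC I_t$ if and only if, for every $\lambda\in\widehat{I}_t$, the element $z$ acts on the irreducible representation $I^{\lambda}_t$ as a scalar multiple of $\mathrm{id}_{I^{\lambda}_t}$.

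Next, I would apply Theorem~\ref{thm:JMZ}(iii), which shows that on every irreducible representation $I^{\lambda}_t$ the element $Z_t$ acts as the scalar $|\lambda|$ and $\tilde{Z}_t$ acts as the scalar $\sum_{b\in\lambda}\ct(b)$. Combining this with the characterization of the center from the previous paragraph immediately yields $Z_t,\tilde{Z}_t\in Z(\CC I_t)$ for every $t\in\frac{1}{2}\ZZ_{>0}$, proving the corollary.

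Since the substantive content already resides in Theorem~\ref{thm:JMZ}, this corollary reduces to a formal application of Wedderburn theory and has no real obstacle. If one preferred to avoid invoking Theorem~\ref{thm:JMZ}(iii) directly, an alternative route would be through Schur--Weyl duality: Theorem~\ref{thm:JMZ}(ii) identifies the action of $Z_t$ and $\tilde{Z}_t$ on $V^{\otimes t}$ with that of the central elements $\kappa_{n},\tilde{\kappa}_{n}$ (or $\kappa_{n-1},\tilde{\kappa}_{n-1}$) of Theorem~\ref{thm:a}, and the bimodule decomposition in Theorem~\ref{thm:decomposition} together with the faithfulness of $\tilde{\phi}_t$ for $n$ sufficiently large forces scalar action on each $I^{\lambda}_t$, giving centrality again.
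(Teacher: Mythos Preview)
Your proposal is correct. Your primary route---invoking Theorem~\ref{thm:JMZ}(iii) and then using the Artin--Wedderburn decomposition of the semisimple algebra $\CC I_t$ to conclude that an element acting by a scalar on every irreducible is central---is a clean and valid argument. The paper instead cites Theorem~\ref{thm:a}, Theorem~\ref{thm:swd_rook}, Corollary~\ref{thm:swdfrook_half}, and Theorem~\ref{thm:JMZ}(ii): it uses that $Z_t$ and $\tilde{Z}_t$ act on $V^{\otimes k}$ (or $V^{\otimes(k+\frac{1}{2})}$) as the central elements $\kappa_n,\tilde{\kappa}_n$ (or $\kappa_{n-1},\tilde{\kappa}_{n-1}$) of the rook monoid algebra, hence commute with the $\CC I_t$-action, and then appeals to the faithfulness of $\tilde{\phi}_t$ for $n$ large. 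This is precisely the alternative route you sketch in your final paragraph. The two arguments are essentially equivalent in depth; yours via part~(iii) is marginally more self-contained since it avoids re-invoking Schur--Weyl duality and faithfulness, while the paper's via part~(ii) stays closer to the operator-theoretic picture on tensor space that drives the rest of Section~\ref{sec:JM}.
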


\begin{proof}
The result follows from Theorem~\eqref{thm:a}, Theorem~\ref{thm:swd_rook}, Corollary~\ref{thm:swdfrook_half}, and Theorem~\ref{thm:JMZ}(ii).
\end{proof}

For $t\in\frac{1}{2}\mathbb{Z}_{> 0}$, define the Jucys--Murphy elements of $\CC I_{t}$ as follows:
\begin{equation}\label{eq:JM_tppa}
\begin{aligned}
M_{\frac{1}{2}}&=1, \quad \tilde{M}_{\frac{1}{2}}=0,\\
M_{y}&=Z_{y}-Z_{y-\frac{1}{2}} \text{ and } \tilde{M}_{y}=\tilde{Z}_{y}-\tilde{Z}_{y-\frac{1}{2}}, \text{ for } y\in \frac{1}{2}\ZZ_{>0} \text{ and } \frac{1}{2}<y\leq t.
\end{aligned}
\end{equation}
In the following theorem, we show that the elements~\eqref{eq:JM_tppa} satisfy the fundamental properties of Jucys--Murphy elements as discussed in Section~\ref{sec:intro}. 
\begin{theorem}\label{thm:JM}
	Let $t\in\frac{1}{2}\mathbb{Z}_{> 0}$.
	\begin{enumerate}[(i)]
		\item The elements $M_{\frac{1}{2}},M_{1},\ldots,M_{t-\frac{1}{2}}, M_{t},\tilde{M}_{\frac{1}{2}},\tilde{M}_{1},\ldots,\tilde{M}_{t-\frac{1}{2}}, \tilde{M}_{t}$ 
		commute with each other in $\mathbb{C}I_{t}$.  Also, $\sum_{y\in\frac{1}{2}\ZZ_{>0},y\leq t} M_{y}$ and $\sum_{y\in\frac{1}{2}\ZZ_{>0},y\leq t}\tilde{M}_y$ are central elements of $\CC I_t$.
		\item  Let $\widehat{I}^{\mu}_{t}$ denote the set of paths (see Convention~\ref{con} for the definition of a path) from $\emptyset\in \widehat{I}_{\frac{1}{2}}$ to $\mu\in\widehat{I}_{t}$ in the Bratteli diagram $\widehat{I}$.
		Then there is a unique, up to scalars,  basis
		$\{v_{\gamma}\,|\, \gamma\in\widehat{I}^{\mu}_{t}\}$ of the irreducible representation $I^{\mu}_{t}$ of $\CC I_{t}$ such that, 
		 for $\gamma=(\gamma^{(\frac{1}{2})}=\emptyset, \gamma^{(1)},\gamma^{(\frac{3}{2})},\ldots,\gamma^{(t-\frac{1}{2})},\gamma^{(t)}=\mu)\in\widehat{I}^{\mu}_{t}$, and $l\in\mathbb{Z}_{> 0}$, $l\leq t$, we have
		\begin{align*}
	&\tilde{M}_{l}v_{\gamma}= \ct(\gamma^{(l)}/\gamma^{(l-\frac{1}{2})})v_{\gamma}, \quad  M_{l}v_{\gamma}= v_{\gamma},\\
	&\tilde{M}_{\frac{1}{2}}v_{\gamma}=0, M_{\frac{1}{2}}v_{\gamma}=v_{\gamma},	\end{align*}
	\begin{align*}
	\text{ and for $l>1$},	\quad	&\tilde{M}_{l-\frac{1}{2}}v_{\gamma}=\begin{cases} -\ct(\gamma^{(l-1)}/\gamma^{(l-\frac{1}{2})})v_{\gamma} & \text{ if } \gamma^{(l-1)}/\gamma^{(l-\frac{1}{2})}=\ytableausetup{boxsize=0.7em}
\begin{ytableau}
\empty \\
\end{ytableau},\\ 
		0    & \text{ if } \gamma^{(l-1)}=\gamma^{(l-\frac{1}{2})},
		\end{cases}	\\	
&M_{l-\frac{1}{2}}v_{\gamma}=\begin{cases} -v_{\gamma} & \text{ if } \gamma^{(l-1)}/\gamma^{(l-\frac{1}{2})}=\ytableausetup{boxsize=0.7em}
\begin{ytableau}
\empty \\
\end{ytableau},\\ 
0    & \text{ if } \gamma^{(l-1)}=\gamma^{(l-\frac{1}{2})}.
\end{cases}
\end{align*}
\end{enumerate}
\end{theorem}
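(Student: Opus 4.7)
\textbf{Proof proposal for Theorem \ref{thm:JM}.}

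My plan is to derive everything from Corollary \ref{coro:center} (centrality of $Z_y$ and $\tilde Z_y$ in $\CC I_y$) and Theorem \ref{thm:JMZ}(iii) (the scalar action of $Z_y, \tilde Z_y$ on the irreducibles $I^\lambda_y$), together with the fact (established via Theorem \ref{thm:item2}) that the tower \eqref{eq:tower} is multiplicity-free.

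For part (i), I first record that the nested inclusions $\CC I_{y_1}\subset \CC I_{y_2}$ from \eqref{eq:embedding} give, for $y_1\le y_2$, two commuting central elements inside $\CC I_{y_2}$: namely $Z_{y_2}$ is central in $\CC I_{y_2}$ by Corollary \ref{coro:center}, and $Z_{y_1},\tilde Z_{y_1}\in \CC I_{y_1}\subset \CC I_{y_2}$ commute with every element of $\CC I_{y_1}$. Hence any two elements from the list $\{Z_y, \tilde Z_y\}_{y\le t}$ commute, and the same holds for the differences $M_y=Z_y-Z_{y-\frac12}$ and $\tilde M_y=\tilde Z_y-\tilde Z_{y-\frac12}$. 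For the central sums, I note the telescoping
\begin{equation*}
\sum_{\frac12\le y\le t} M_y \;=\; M_{\frac12}+\sum_{\frac12<y\le t}(Z_y-Z_{y-\frac12})\;=\;1+Z_t-Z_{\frac12}\;=\;Z_t,
\end{equation*}
using $M_{\frac12}=1=Z_{\frac12}$, and analogously $\sum_y \tilde M_y=\tilde Z_t$. Both are central by Corollary \ref{coro:center}.

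For part (ii), existence and uniqueness (up to scalars) of the basis $\{v_\gamma\mid\gamma\in\widehat I^\mu_t\}$ is the standard Gelfand--Tsetlin construction for a multiplicity-free tower, applied to \eqref{eq:tower}: iterating Theorem \ref{thm:item2}, each restriction $\Res_{\CC I_{s-\frac12}}^{\CC I_s}$ (or $\Res_{\CC I_{s-1}}^{\CC I_{s-\frac12}}$) is multiplicity-free, so decomposing $I^\mu_t$ along the tower produces a canonical system of one-dimensional subspaces indexed by paths $\gamma$ in the Bratteli diagram $\widehat I$. By construction, the vector $v_\gamma$ lies in the (unique) irreducible summand isomorphic to $I^{\gamma^{(s)}}_s$ inside $\Res_{\CC I_s}^{\CC I_t}I^\mu_t$, for every $s\le t$. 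Since $Z_s$ and $\tilde Z_s$ are central in $\CC I_s$, Theorem \ref{thm:JMZ}(iii) gives
\begin{equation*}
Z_s v_\gamma=|\gamma^{(s)}|\,v_\gamma,\qquad \tilde Z_s v_\gamma=\Bigl(\sum_{b\in\gamma^{(s)}}\ct(b)\Bigr)v_\gamma.
\end{equation*}

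The eigenvalues of $M_l,\tilde M_l,M_{l-\frac12},\tilde M_{l-\frac12}$ on $v_\gamma$ are then just the differences of these scalars between consecutive levels, and I read them off the Bratteli diagram $\widehat I$ described after Theorem \ref{thm:item2}. Between levels $l-\frac12$ and $l$, every edge adds a box, so $|\gamma^{(l)}|-|\gamma^{(l-\frac12)}|=1$ and $\sum_{b\in\gamma^{(l)}}\ct(b)-\sum_{b\in\gamma^{(l-\frac12)}}\ct(b)=\ct(\gamma^{(l)}/\gamma^{(l-\frac12)})$, yielding $M_l v_\gamma=v_\gamma$ and $\tilde M_l v_\gamma=\ct(\gamma^{(l)}/\gamma^{(l-\frac12)})v_\gamma$. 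Between levels $l-1$ and $l-\frac12$, either $\gamma^{(l-\frac12)}=\gamma^{(l-1)}$ (giving zero eigenvalues) or $\gamma^{(l-1)}/\gamma^{(l-\frac12)}$ is a single box (giving eigenvalues $-1$ for $M_{l-\frac12}$ and $-\ct(\gamma^{(l-1)}/\gamma^{(l-\frac12)})$ for $\tilde M_{l-\frac12}$, the minus sign arising because the box is removed when passing from $\gamma^{(l-1)}$ to $\gamma^{(l-\frac12)}$). The base cases $M_{\frac12}=1,\tilde M_{\frac12}=0$ are immediate. No hard obstacle arises: the heavy technical content has already been absorbed into Theorem \ref{thm:JMZ}, and the only subtlety is keeping track of signs and the half-integer/integer casework dictated by the two kinds of edges in $\widehat I$.
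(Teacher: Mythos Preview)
Your proposal is correct and follows essentially the same route as the paper: part (i) via centrality of the $Z_y,\tilde Z_y$ (Corollary~\ref{coro:center}) and telescoping, and part (ii) via the multiplicity-free Gelfand--Tsetlin decomposition from Theorem~\ref{thm:item2} combined with the scalar actions from Theorem~\ref{thm:JMZ}(iii). The only cosmetic difference is that the paper phrases uniqueness by observing that distinct paths yield distinct eigenvalue tuples for the $M_y,\tilde M_y$, whereas you invoke the general Gelfand--Tsetlin formalism directly; both arguments are equivalent here.
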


\begin{proof}
\begin{enumerate}[(i)]
	\item Using the tower $\mathbb{C}I_{\frac{1}{2}}\subseteq  \mathbb{C}I_{1}\subset\cdots\subset\mathbb{C}I_{t}$
	we can view $Z_{\frac{1}{2}},\ldots,Z_{t},\tilde{Z}_{\frac{1}{2}},\ldots,\tilde{Z}_{t}$ as elements of $\mathbb{C}I_{t}$. By Corollary~\ref{coro:center}, both $Z_{y}$ and $\tilde{Z}_{y}$ are in the center of $\mathbb{C}I_{y}$, for $y\in \frac{1}{2}\ZZ_{> 0}$ and $y\leq t$. So, the Jucys--Murphy elements of $\mathbb{C}I_{t}$ commute with each other.  Both, $\sum_{y\in\frac{1}{2}\ZZ_{>0},y\leq t}M_{y} = Z_t$ and  $\sum_{y\in\frac{1}{2}\ZZ_{>0},y\leq t}\tilde{M}_y = \tilde{Z}_{t}$, are in the center of $\mathbb{C}I_{t}$. 
	
	\item We construct a basis $\{v_{\gamma}\mid \gamma\in \widehat{I}^{\mu}_{t}\}$ of $I^{\mu}_{t}$ inductively and then prove its desired properties. For $t=\frac{1}{2}$ or $1$, the algebra $\CC I_{t}$ is one dimensional and so there is only one irreducible representation of $\CC I_t$. In particular, $\dim I^{\emptyset}_{\frac{1}{2}}=1=\dim I^{(1)}_{1}$, thus
	there are unique choices of bases, up to scalars, of representations $  I^{\emptyset}_{\frac{1}{2}}$ and $ I^{(1)}_{1}$. Now we describe the inductive step. For $t >1$ and $\mu \in \widehat{I}_{t}$, by Theorem ~\ref{thm:item2}, an irreducible representation of $\mathbb{C}I_{t-\frac{1}{2}}$ in $\text{Res}^{\mathbb{C}I_{t}}_{\mathbb{C}I_{t-\frac{1}{2}}}I^{\mu}_{t}$ can occur at most once. By induction, we can choose a basis of each irreducible representation of ${\mathbb{C}I_{t-\frac{1}{2}}}$. For each irreducible representation $I^{\nu}_{t-\frac{1}{2}}$ of $\mathbb{C}I_{t-\frac{1}{2}}$ that appears in $\text{Res}^{\mathbb{C}I_{t}}_{\mathbb{C}I_{t-\frac{1}{2}}}I^{\mu}_{t}$, there is an edge between $\nu$ and $\mu$ in $\widehat{I}$. Taking the union of the bases of all $I^{\nu}_{t-\frac{1}{2}}$ appearing in $\text{Res}^{\mathbb{C}I_{t}}_{\mathbb{C}I_{t-\frac{1}{2}}}I^{\mu}_{t}$, we get a basis for $I^{\mu}_{t}$. 
	
	Now, we give the proof of the action of Jucys--Murphy elements on $v_{\gamma}$ for $\gamma\in \widehat{I}^{\mu}_{t}$. For $\gamma=(\gamma^{(\frac{1}{2})}, \gamma^{(1)},\gamma^{(\frac{3}{2})},\ldots,\gamma^{(t-\frac{1}{2})},\gamma^{(t)})\in\widehat{I}^{\mu}_{t}$, we have $\gamma^{(l)}/\gamma^{(l-\frac{1}{2})}= \ytableausetup{boxsize=0.7em}
	 \begin{ytableau}
	 \empty \\
	 \end{ytableau}$, where $l\in\ZZ_{> 0}$ and $l\leq t$. By the inductive construction of the basis above, $v_{\gamma}\in I^{\gamma^{(l)}}_{l}$ and also $v_{\gamma}\in I^{\gamma^{(l-\frac{1}{2})}}_{l-\frac{1}{2}}$.
	Then by Theorem~\ref{thm:JMZ},
	\begin{align*}
	M_{l}v_{\gamma}&=Z_{l}v_{\gamma}-Z_{l-\frac{1}{2}}v_{\gamma} =(|\gamma^{(l)}|-|\gamma^{(l-\frac{1}{2})}|)v_{\gamma}=v_{\gamma},  \text{  and}\\
	 \tilde{M}_{l}v_{\gamma}&=\tilde{Z}_{l}v_{\gamma}-\tilde{Z}_{l-\frac{1}{2}}v_{\gamma}
	  =\bigg(\sum_{b\in\gamma^{(l)}}\ct(b)-\sum_{b'\in\gamma^{(l-\frac{1}{2})}}\ct(b')\bigg)v_{\gamma}=\ct(\gamma^{(l)}/\gamma^{(l-\frac{1}{2})})v_{\gamma}.
	\end{align*}
	
	For $2\leq l$, either $\gamma^{(l-1)}/\gamma^{(l-\frac{1}{2})}=\ytableausetup{boxsize=0.7em}
	\begin{ytableau}
	\empty \\
	\end{ytableau}$ or $\gamma^{(l-1)}=\gamma^{(l-\frac{1}{2})} $. Again by Theorem~\ref{thm:JMZ}, we have
	\begin{align*}
	&M_{l-\frac{1}{2}}v_{\gamma}=Z_{l-\frac{1}{2}}v_{\gamma}-Z_{l-1}v_{\gamma}
	 =(|\gamma^{(l-\frac{1}{2})}|- |\gamma^{(l-1)}|)v_{\gamma}= \begin{cases}
	 -v_{\gamma}&\text{if } \gamma^{(l-1)}/\gamma^{(l-\frac{1}{2})}=\ytableausetup{boxsize=0.7em}
	 \begin{ytableau}
	 \empty \\
	 \end{ytableau},\\
	 0 &\text{if } \gamma^{(l-1)}=\gamma^{(l-\frac{1}{2})},
	 \end{cases}\\
	&\tilde{M}_{l-\frac{1}{2}}v_{\gamma}=\tilde{Z}_{l-\frac{1}{2}}v_{\gamma}-\tilde{Z}_{l-1}v_{\gamma}
	= \bigg(\sum_{b'\in \gamma^{(l-\frac{1}{2})}} \ct(b')-\sum_{b''\in \gamma^{(l-1)}}\ct(b'')\bigg)v_{\gamma}\\&
	\qquad\qquad\qquad\qquad\qquad\quad\quad=
\begin{cases}
	-\ct(\gamma^{(l-1)}/\gamma^{(l-\frac{1}{2})})v_{\gamma} &\text{if } \gamma^{(l-1)}/\gamma^{(l-\frac{1}{2})}=\ytableausetup{boxsize=0.7em}
	\begin{ytableau}
	\empty \\
	\end{ytableau},\\
	 0 &\text{if } \gamma^{(l-1)}=\gamma^{(l-\frac{1}{2})}.
	\end{cases} 
	\end{align*}
	Thus, the basis $\{v_{\gamma}\,|\, \gamma\in\widehat{I}^{\mu}_{t}\}$ of $I^{\mu}_{t}$ consists of simultaneous eigenvectors of Jucys--Murphy elements of $\CC I_t$. For a path $\gamma \in \widehat{I}^{\mu}_{t}$, let $\alpha_\gamma = (a_{\frac{1}{2}, \gamma}, \tilde{a}_{\frac{1}{2}, \gamma}, a_{1, \gamma}, \tilde{a}_{1, \gamma}, \ldots, a_{t,\gamma}, \tilde{a}_{t,\gamma})$, where $M_y v_{\gamma} = a_{y, \gamma}v_\gamma$ and $\tilde{M}_y v_{\gamma} = \tilde{a}_{y, \gamma} v_\gamma$ for $y\in\frac{1}{2}\ZZ_{>0},y\leq t$. Given paths $\gamma_1, \gamma_2 \in \widehat{I}^{\mu}_{t}$, $\gamma_1 \neq \gamma_2$, we have $\alpha_{\gamma_1} \neq \alpha_{\gamma_2}$. Such a simultaneous eigenbasis is unique (up to scalars).
        \qedhere
\end{enumerate}
\end{proof}

The basis constructed here is the (canonical) Gelfand--Tsetlin basis of an irreducible representation $I^{\mu}_{t}$ of $\CC I_{t}$, for $t\in\frac{1}{2}\ZZ_{> 0}$. 
Recall that a Gelfand--Tsetlin vector is defined to be an element of Gelfand--Tsetlin basis of some irreducible representation of $\CC I_{t}$.
Now, the following corollary implies the spectral significance of Jucys--Murphy elements in the representation theory of totally propagating partition algebras.

\begin{corollary}\label{cor:jmtppa}
Let $l\in\ZZ_{> 0}$.	The actions of the elements $\tilde{M}_{y}$, for $y\in \frac{1}{2}Z_{> 0}$ and $y\leq l$, on Gelfand--Tsetlin vectors are sufficient to distinguish the nonisomorphic irreducible representations of $\CC I_{l}$. While in order to distinguish the nonisomorphic irreducible representations of $\CC I_{l-\frac{1}{2}}$, the actions of both $M_{y}$ and $\tilde{M}_{y}$ for  $y\in \frac{1}{2}Z_{> 0}$ and $y\leq l-\frac{1}{2}$ on Gelfand--Tsetlin vectors are needed to be considered.
\end{corollary}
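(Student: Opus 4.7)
The plan is to read the corollary off of Theorem~\ref{thm:JM}(ii) by tracking which data about the underlying path $\gamma$ is captured by the $\tilde{M}_y$-eigenvalues alone, and where the $M_y$-eigenvalues are needed to restore separation. The two assertions of the corollary will be handled separately.

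For $\CC I_l$, the argument is an inductive reconstruction of the path $\gamma=(\gamma^{(1/2)}=\emptyset,\gamma^{(1)},\ldots,\gamma^{(l)})$ of any Gelfand--Tsetlin vector $v_\gamma$ from the tuple of eigenvalues $(\tilde{M}_{1/2}v_\gamma,\tilde{M}_1 v_\gamma,\ldots,\tilde{M}_l v_\gamma)$. At an integer level $m\le l$, $\tilde{M}_m v_\gamma=\ct(\gamma^{(m)}/\gamma^{(m-1/2)})$ records the content of the outer corner added to $\gamma^{(m-1/2)}$, and at a half-integer level $m-1/2$ the eigenvalue records minus the content of the removed inner corner (or $0$ when $\gamma^{(m-1)}=\gamma^{(m-1/2)}$). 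Since the outer (respectively, inner) corners of any fixed Young diagram lie on pairwise distinct diagonals and hence carry pairwise distinct contents, once $\gamma^{(m-1/2)}$ is known the eigenvalue $\tilde{M}_m$ pins down $\gamma^{(m)}$, and symmetrically at the half-integer steps. Propagating forward from $\gamma^{(1/2)}=\emptyset$, the full path is determined, and in particular so is the top shape $\gamma^{(l)}=\mu$ indexing the irreducible representation.

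For $\CC I_{l-1/2}$, I would exhibit a concrete obstruction by contrasting the two one-dimensional irreducibles $I_{l-1/2}^{\emptyset}$ and $I_{l-1/2}^{(1)}$. The unique Gelfand--Tsetlin vector in each case comes from a path whose shapes lie entirely in $\{\emptyset,(1)\}$, and the only box of $(1)$ has content $0$. Theorem~\ref{thm:JM}(ii) then forces $\tilde{M}_y v_\gamma=0$ on both vectors for every $y\le l-1/2$, so the $\tilde{M}_y$-eigenvalues alone cannot separate these two irreducibles. By contrast, $M_{l-1/2}$ acts as $-1$ on the vector of $I_{l-1/2}^{\emptyset}$ (the last step removes the box of content $0$) and as $0$ on the vector of $I_{l-1/2}^{(1)}$ (the shape is unchanged), which forces the $M_y$'s into the spectral data.

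The main obstacle will be the sufficiency half for $\CC I_l$: the eigenvalue $\tilde{M}_{m-1/2}=0$ is intrinsically ambiguous, since it occurs both when $\gamma^{(m-1)}=\gamma^{(m-1/2)}$ and when a diagonal (content $0$) inner corner is removed. I expect to resolve this by a local case analysis that pairs $\tilde{M}_{m-1/2}$ with the integer-level eigenvalue $\tilde{M}_m$, using that distinct candidate shapes for $\gamma^{(m-1/2)}$ support distinct multisets of outer-corner contents along the remainder of the path; carrying this through for each such ambiguity is what turns the inductive sketch into a complete reconstruction of $\gamma$.
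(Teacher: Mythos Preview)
The paper gives no proof; the corollary is simply asserted as a consequence of Theorem~\ref{thm:JM}(ii). Your treatment of the half-integer clause is fine and matches the spirit of Example~\ref{ex:JM}.

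For the integer clause, however, the resolution you propose for the content-$0$ ambiguity does not go through, and the obstruction is not merely technical. Take $l=5$ and compare the two paths
\[
\gamma=(\emptyset,(1),(1),(2),(2),(2,1),(2,1),(2,2),(2,2),(3,2)),\qquad
\gamma'=(\emptyset,(1),(1),(2),(2),(2,1),(2,1),(2,2),(2,1),(3,1)).
\]
Both give $\tilde{M}_{9/2}v=0$ (in $\gamma$ the shape is unchanged at level $9/2$; in $\gamma'$ the content-$0$ inner corner $(2,2)$ is removed) and $\tilde{M}_{5}v=2v$ (in each case the box $(1,3)$ of content $2$ is added at level $5$), so the entire $\tilde{M}$-eigenvalue sequence is $(0,0,0,1,0,-1,0,0,0,2)$ for both, while the endpoints $(3,2)$ and $(3,1)$ differ. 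The two candidate shapes $(2,2)$ and $(2,1)$ for the level-$9/2$ vertex share the outer-corner content $2$, so ``pairing $\tilde{M}_{m-1/2}$ with $\tilde{M}_m$'' cannot separate them, and since $m=l$ there is no ``remainder of the path'' left to appeal to. Your inductive reconstruction of $\gamma$ from the $\tilde{M}$-spectrum alone is therefore impossible in general; under the natural strong reading of ``distinguish'' (that every Gelfand--Tsetlin vector's $\tilde{M}$-spectrum determines the irreducible it lies in), the first clause of the corollary actually fails, so whatever weaker interpretation the authors have in mind, the full-path reconstruction strategy is not the right tool.
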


  {\bf Acknowledegments}: The authors thank Prof. Arun Ram for encouragement and fruitful discussions. The authors also thank Prof. James East for pointing out the paper~\cite{JE} which studies the cellularity of totally propapgating partition algebras. S.S. thanks Prof. Upendra Kulkarni for his question on Schur--Weyl duality for the rook monoid algebra which was a motivation for this project. A.M. was supported by visiting professorship at UFPA, Brazil. S.S. was supported by the institute postdoctoral fellowship at TIFR, Mumbai, India.

{\footnotesize
\bibliographystyle{abbrvurl}
\bibliography{refs}

\begin{thebibliography}{10}

\bibitem{BH}
G.~Benkart and T.~Halverson.
\newblock Partition algebras and the invariant theory of the symmetric group.
\newblock In {\em Recent Trends in Algebraic Combinatorics}, pages 1--41.
  Springer International Publishing, Cham, 2019.
\newblock URL: \url{https://doi.org/10.1007/978-3-030-05141-9_1}.

\bibitem{BH19}
G.~Benkart and T.~Halverson.
\newblock Partition algebras {$\mathsf{P}_k(n)$} with {$2k>n$} and the
  fundamental theorems of invariant theory for the symmetric group
  {$\mathsf{S}_n$}.
\newblock {\em J. Lond. Math. Soc. (2)}, 99(1):194--224, 2019.
\newblock URL: \url{https://doi.org/10.1112/jlms.12175}.

\bibitem{BHH}
G.~Benkart, T.~Halverson, and N.~Harman.
\newblock Dimensions of irreducible modules for partition algebras and tensor
  power multiplicities for symmetric and alternating groups.
\newblock {\em Journal of Algebraic Combinatorics}, 46:77--108, 2016.

\bibitem{EEF08}
D.~Easdown, J.~East, and D.~G. FitzGerald.
\newblock A presentation of the dual symmetric inverse monoid.
\newblock {\em Internat. J. Algebra Comput.}, 18(2):357--374, 2008.
\newblock URL: \url{http://dx.doi.org/10.1142/S0218196708004470}.

\bibitem{JE}
J.~East.
\newblock Cellular algebras and inverse semigroups.
\newblock {\em J. Algebra}, 296(2):505--519, 2006.
\newblock URL: \url{https://doi.org/10.1016/j.jalgebra.2005.04.027}, \href
  {http://dx.doi.org/10.1016/j.jalgebra.2005.04.027}
  {\path{doi:10.1016/j.jalgebra.2005.04.027}}.

\bibitem{FL}
D.~G. Fitzgerald and J.~Leech.
\newblock Dual symmetric inverse monoids and representation theory.
\newblock {\em Journal of the Australian Mathematical Society. Series A. Pure
  Mathematics and Statistics}, 64(3):345–367, 1998.
\newblock \href {http://dx.doi.org/10.1017/S1446788700039227}
  {\path{doi:10.1017/S1446788700039227}}.

\bibitem{FH}
W.~Fulton and J.~Harris.
\newblock {\em Representation theory}, volume 129 of {\em Graduate Texts in
  Mathematics}.
\newblock Springer-Verlag, New York, 1991.
\newblock A first course, Readings in Mathematics.
\newblock URL: \url{https://doi.org/10.1007/978-1-4612-0979-9}.

\bibitem{Grood}
C.~Grood.
\newblock A {S}pecht module analog for the rook monoid.
\newblock {\em Electron. J. Combin.}, 9(1):Research Paper 2, 10, 2002.
\newblock URL:
  \url{http://www.combinatorics.org/Volume_9/Abstracts/v9i1r2.html}.

\bibitem{Hal04}
T.~Halverson.
\newblock Representations of the {$q$}-rook monoid.
\newblock {\em J. Algebra}, 273(1):227--251, 2004.
\newblock URL: \url{http://dx.doi.org/10.1016/j.jalgebra.2003.11.002}.

\bibitem{HR05}
T.~Halverson and A.~Ram.
\newblock Partition algebras.
\newblock {\em European J. Combin.}, 26(6):869--921, 2005.
\newblock URL: \url{http://dx.doi.org/10.1016/j.ejc.2004.06.005}.

\bibitem{Jones}
V.~F.~R. Jones.
\newblock The {P}otts model and the symmetric group.
\newblock In {\em Subfactors ({K}yuzeso, 1993)}, pages 259--267. World Sci.
  Publ., River Edge, NJ, 1994.

\bibitem{KM08}
G.~Kudryavtseva and V.~Mazorchuk.
\newblock Schur-{W}eyl dualities for symmetric inverse semigroups.
\newblock {\em J. Pure Appl. Algebra}, 212(8):1987--1995, 2008.
\newblock URL: \url{http://dx.doi.org/10.1016/j.jpaa.2007.12.004}.

\bibitem{Mal07}
V.~Maltcev.
\newblock On a new approach to the dual symmetric inverse monoid {$ I_X^*$}.
\newblock {\em Internat. J. Algebra Comput.}, 17(3):567--591, 2007.
\newblock URL: \url{http://dx.doi.org/10.1142/S0218196707003792}.

\bibitem{M94}
P.~Martin.
\newblock Temperley-{L}ieb algebras for nonplanar statistical mechanics---the
  partition algebra construction.
\newblock {\em J. Knot Theory Ramifications}, 3(1):51--82, 1994.
\newblock URL: \url{https://doi.org/10.1142/S0218216594000071}.

\bibitem{MR98}
P.~P. Martin and G.~Rollet.
\newblock The {P}otts model representation and a {R}obinson-{S}chensted
  correspondence for the partition algebra.
\newblock {\em Compositio Math.}, 112(2):237--254, 1998.
\newblock URL: \url{https://doi.org/10.1023/A:1000400414739}.

\bibitem{MS16}
A.~Mishra and M.~K. Srinivasan.
\newblock The {O}kounkov-{V}ershik approach to the representation theory of
  {$G\sim S_n$}.
\newblock {\em J. Algebraic Combin.}, 44(3):519--560, 2016.
\newblock URL: \url{https://doi.org/10.1007/s10801-016-0679-5}.

\bibitem{MS}
A.~Mishra and S.~Srivastava.
\newblock On representation theory of partition algebras for complex reflection
  groups.
\newblock {\em Algebraic Combinatorics}, 3(2):389--432, 2020.
\newblock URL: \url{alco.centre-mersenne.org/item/ALCO_2020__3_2_389_0/}, \href
  {http://dx.doi.org/10.5802/alco.97} {\path{doi:10.5802/alco.97}}.

\bibitem{Munn}
W.~D. Munn.
\newblock Matrix representations of semigroups.
\newblock {\em Proc. Cambridge Philos. Soc.}, 53:5--12, 1957.
\newblock URL: \url{https://doi.org/10.1017/s0305004100031935}.

\bibitem{vo1}
A.~Okounkov and A.~Vershik.
\newblock A new approach to representation theory of symmetric groups.
\newblock {\em Selecta Math. (N.S.)}, 2(4):581--605, 1996.
\newblock URL: \url{http://dx.doi.org/10.1007/PL00001384}.

\bibitem{Ram90}
A.~Ram.
\newblock Dissertation, {C}hapter 1.
\newblock
  \url{http://math.soimeme.org/~arunram/Teaching/RepThy2008/dissertationChapt1.pdf},
  (accessed 28th December 2017). Version: 04th July, 1990.

\bibitem{Ram2010S}
A.~Ram.
\newblock \emph{Notes titled} {S}chur-{W}eyl dualities.
\newblock
  \url{http://soimeme.org/~arunram/Notes/schurweyldualitiesContent.xhtml},
  (accessed 28th December 2017). Version: 20th June, 2010.

\bibitem{Sagan}
B.~E. Sagan.
\newblock {\em The symmetric group}, volume 203 of {\em Graduate Texts in
  Mathematics}.
\newblock Springer-Verlag, New York, second edition, 2001.
\newblock Representations, combinatorial algorithms, and symmetric functions.
\newblock URL: \url{https://doi.org/10.1007/978-1-4757-6804-6}.

\bibitem{Solomon}
L.~Solomon.
\newblock Representations of the rook monoid.
\newblock {\em J. Algebra}, 256(2):309--342, 2002.
\newblock URL: \url{https://doi.org/10.1016/S0021-8693(02)00004-2}.

\bibitem{Solomon1}
L.~Solomon.
\newblock The {I}wahori algebra of {$\mathbf M_n(\mathbf F_q)$}. {A}
  presentation and a representation on tensor space.
\newblock {\em J. Algebra}, 273(1):206--226, 2004.
\newblock URL: \url{https://doi.org/10.1016/j.jalgebra.2003.08.013}.

\bibitem{Stanley}
R.~P. Stanley.
\newblock {\em Enumerative combinatorics. {V}ol. 2}, volume~62 of {\em
  Cambridge Studies in Advanced Mathematics}.
\newblock Cambridge University Press, Cambridge, 1999.
\newblock With a foreword by Gian-Carlo Rota and appendix 1 by Sergey Fomin.
\newblock URL: \url{https://doi.org/10.1017/CBO9780511609589}.

\bibitem{vo}
A.~M. Vershik and A.~Y. Okounkov.
\newblock A new approach to representation theory of symmetric groups. {II}.
\newblock {\em Zap. Nauchn. Sem. S.-Peterburg. Otdel. Mat. Inst. Steklov.
  (POMI)}, 307(Teor. Predst. Din. Sist. Komb. i Algoritm. Metody. 10):57--98,
  281, 2004.
\newblock translation in J. Math. Sci. (N. Y.) 131 (2005), no. 2, 5471--5494.
\newblock URL: \url{http://dx.doi.org/10.1007/s10958-005-0421-7}.

\end{thebibliography}
}

\Addresses

\end{document}